\UseAllTwocells \xyoption{frame} \CompileMatrices
\newtheorem{prop}{Proposition}[section]
\newtheorem{lem}[prop]{Lemma}
\newtheorem{cor}[prop]{Corollary}
\newtheorem{thm}[prop]{Theorem}
\numberwithin{equation}{section}
\theoremstyle{definition}
\newtheorem{defn}[prop]{Definition}
\newtheorem{example}[prop]{Example}
\newtheorem{rmk}[prop]{Remark}
\newcommand{\mmod}{{\!/\!\!/\!\!/\!\!/}}
\newcommand{\noprint}[1]{}
\newcommand{\virt}{\mbox{\tiny virt}}
\newcommand{\red}{\mbox{\tiny red}}
\newcommand{\pt}{\mathop{pt}}
\newcommand{\MM}{{\mathfrak M}}
\newcommand{\zz}{{\mathbb Z}}
\newcommand{\hh}{{\mathbb H}}
\newcommand{\aaa}{{\mathbb A}}
\newcommand{\qq}{{\mathbb Q}}
\newcommand{\pp}{{\mathbb P}}
\newcommand{\cc}{{\mathbb C}}
\newcommand{\rr}{{\mathbb R}}
\newcommand{\cC}{{\mathcal C}}
\newcommand{\oO}{{\mathcal O}}
\newcommand{\A}{{\mathcal A}}
\newcommand{\M}{\mathcal{M}(\mathbf{\mathcal{A}})}
\newcommand{\CR}{\mbox{\tiny CR}}
\newcommand{\bbox}{\mbox{\tiny Box}}
\newcommand{\bbbox}{\mbox{Box}}
\newcommand{\MHL}{\mathcal{M}_{\lambda}(\mathcal{H})}
\newcommand{\age}{\mbox{age}}
\newcommand{\cM}{\overline{\mathcal{M}}}
\newcommand{\NE}{\mbox{NE}}
\newcommand{\inv}{\mbox{inv}}
\newcommand{\bbT}{\mathbb{T}}
\newcommand{\lcm}{\mbox{lcm}}
\newcommand{\bbig}{\mbox{\tiny big}}
\newcommand{\rk}{\mathop{\rm rk}}
\newcommand{\ev}{\mathop{\rm ev}\nolimits}
\newcommand{\Aut}{\mathop{\rm Aut}\nolimits}
\numberwithin{equation}{section}
\def\Label#1{\label{#1}{\tt [#1]}\phantom{h}}
\def\Label{\label}
\title[Hypertoric geometry and GW theory]{Hypertoric geometry and Gromov-Witten theory}
\author{Yunfeng Jiang}
\address{Department of Mathematics\\ University of Kansas\\ 405 Snow Hall 1460 Jayhawk Blvd\\Lawrence\\ KS 66045\\ USA}
\email{y.jiang@ku.edu}
\author{Hsian-Hua Tseng}
\address{Department of Mathematics\\ Ohio State University\\ 100 Math Tower,  231 West 18th Ave. \\Columbus, \\OH 43210 \\ USA}
\email{hhtseng@math.ohio-state.edu}
\date{\today}
\begin{document}

\begin{abstract}
We study Gromov-Witten theory of hypertoric Deligne-Mumford stacks from two points of view. From the viewpoint of representation theory, we calculate the operator of small quantum product by a divisor, following \cite{BMO}, \cite{MO}, \cite{MS}. From the viewpoint of Lawrence toric geometry, we compare Gromov-Witten invariants of a hypertoric Deligne-Mumford stack with those of its associated Lawrence toric stack.
\end{abstract}

\maketitle

\tableofcontents

\section{Introduction}

\subsection{Background}
In recent years, Gromov-Witten theory of symplectic varieties has been proved to have  deep connections to geometric representation theory by the work of Braverman-Maulik-Okounkov \cite{BMO}, Maulik-Okounkov \cite{MO}.  In Braverman-Maulik-Okounkov \cite{BMO}, the authors prove a quantum product formula by a divisor class for a smooth symplectic variety 
$X$, which is close related to the symplectic resolution of singularities. 

Symplectic Deligne-Mumford stacks are the corresponding symplectic resolutions in the stacky world.  The Gromov-Witten theory of smooth  Deligne-Mumford stacks has been developed in algebraic category by Abramovich-Graber-Vistoli \cite{AGV} and in symplectic category by Chen-Ruan \cite{CR2}.  It is interesting to study the quantum cohomology of symplectic Deligne-Mumford stacks and explore the relationship to representation theory.  Hypertoric Deligne-Mumford stacks (hypertoric DM stack), defined in \cite{JT} using stacky hyperplane arrangements,  are hyperk\"ahler analogue of K\"ahler toric Deligne-Mumford stacks.  They provide important examples of smooth symplectic Deligne-Mumford stacks.  In this paper we study the quantum cohomology of hypertoric Deligne-Mumford stacks, and generalize the quantum product formula by a divisor class in \cite{BMO} to hypertoric cases.  Note that for the smooth hypertoric varieties, the quantum cohomology has been studied in \cite{MS}.

\subsection{Hypertoric Deligne-Mumford stacks}

Let $N$ be a finitely generated abelian group of rank $d$ and
$N\to \overline{N}$ the natural projection modulo
torsion. Let $\beta:
\mathbb{Z}^{m}\to N$ be a homomorphism determined by a
collection of nontorsion integral vectors $\{b_{1},\cdots,b_{m}\}\subseteq
N$.  The map $\beta$ is required to  have finite cokernel. The Gale dual  (in the sense of \cite{BCS}) of $\beta$ is denoted by $\beta^{\vee}:
(\mathbb{Z}^{m})^{*}\to DG(\beta)$. A $generic$ element 
$\theta$ in $DG(\beta)$ and the vectors $\{\overline{b}_{1},\cdots,\overline{b}_{m}\}$
determine a hyperplane arrangement $\mathcal{H}=(H_{1},\cdots,H_{m})$ in
$N_{\mathbb{R}}^{*}$. We call $\mathcal{A}:=(N,\beta,\theta)$
a {\em stacky hyperplane arrangement}. 

There is a stacky fan $\mathbf{\Sigma_\theta}$ associated to the stacky hyperplane arrangement $\A$, and the associated 
toric Deligne-Mumford stack $\chi(\mathbf{\Sigma_\theta})$ is called  the {\em Lawrence toric Deligne-Mumford stack}.
The stack $\chi(\mathbf{\Sigma_\theta})=[U/G]$ is a quotient stack, where $U\subset \cc^{2m}=T^*\cc^m$ is an open subvariety determined by the irrelevant ideal of the Lawrence fan $\Sigma_{\theta}$,  $G$ is a finitely generated abelian group,  and the $G$-action on $U$ is determined by the stacky fan $\mathbf{\Sigma_\theta}$.  The hypertoric Deligne-Mumford stack $X_{\A}$ is defined as the quotient stack
$[Y/G]$,  where $Y\subset U$ is a  closed subvariety determined by a prime ideal in $T^*\cc^m$, see \S \ref{hyperplane}.  Hence the hypertoric 
Deligne-Mumford stack $X_{\A}$ is a closed substack of the Lawrence toric Deligne-Mumford stack $\chi(\mathbf{\Sigma_\theta})$. 
The topology of $X_\A$ is determined by the hyperplane arrangement $\A$.  In $\mathcal{H}$, the hyperplanes $H_i$ bound finite number of polytopes whose corresponding toric Deligne-Mumford stacks in the sense of \cite{BCS}, \cite{Jiang}, form the core $C(X_\A)$ of  $X_\A$.   The core $C(X_\A)$ is the deformation retract of $X_\A$. 

The map $\beta $  determines a multi-fan $\Delta_{\mathbf{\beta}}$ in $N_{\rr}$, which consists of  cones generated by linearly independent subsets $\{\overline{b}_{i_{1}},\cdots,\overline{b}_{i_{k}}\}$ in $\overline{N}$ for  $\{i_{1},\cdots,i_{k}\}\subset\{1,\cdots,m\}$, see \S \ref{substack}. We define a set $\mbox{Box}(\Delta_{\mathbf{\beta}})$ consisting of all  pairs $(v,\sigma)$, where $\sigma$ is a cone in the multi-fan $\Delta_{\mathbf{\beta}}$,  $v\in N$ such that  $\overline{v}=\sum_{\rho_{i}\subset\sigma}\alpha_{i}\overline{b}_{i}$ for 
$0<\alpha_{i}<1$.  For $(v,\sigma)\in \mbox{Box}(\Delta_{\mathbf{\beta}})$ we consider a closed substack of $X_\A$ given by the quotient stacky hyperplane arrangement
$\mathcal{A}(\sigma)$. The inertia stack of  $X_\A$ is the disjoint union of all such closed substacks, see \S \ref{substack}.  The Chen-Ruan cohomology $H^*_{\CR}(X_\A)=H^*(IX_\A)$ is the same as the cohomology of inertia stack up to the Chen-Ruan degree shifting.  The stack $X_\A$ admits a torus $\bbT:=(\cc^\times)^m\times\cc^\times$ action, where the $(\cc^\times)^m$-action is induced from the standard action on $T^*\cc^m$ and the extra $\cc^\times$ acts by scaling the fibre. We consider the $\bbT$-equivariant Chen-Ruan cohomology  $H^*_{\CR,\bbT}(X_\A)$ of $X_\A$.  Similar to the main result in \cite{JT}, the $\bbT$-equivariant Chen-Ruan cohomology  $H^*_{\CR,\bbT}(X_\A)$ is described by the matroid complex of the multi fan $\Delta_\beta$, see \S \ref{Equivariant:Chen-Ruan:cohomology}.

\subsection{Symplectic resolution and the Steinberg correspondence}\label{subsection:symplectic:resolution}

The genericness of $\theta\in DG(\beta)$ implies that the hyperplane arrangement $\mathcal{H}$ is simple. Hence the hypertoric Deligne-Mumford stack $X_{\A}$ is smooth.   There is a  nowhere vanishing symplectic form $\omega$ on $X_\A$ induced from the standard symplectic form on $T^*\cc^m$, thus $X_A$ is a  smooth symplectic Deligne-Mumford stack. 

If we choose $\theta=0$, then the hyperplanes $H_i$ in $\mathcal{H}$ all pass through the origin.  We denote the corresponding 
hypertoric stack by $X^0$.  It is not Deligne-Mumford, and is a singular stack.  Let $\overline{X}^0$ be its good moduli space in the sense of Alper \cite{Alper}.  There is a contraction map 
$$\phi: X_{\A}\to \overline{X}^0$$
which contracts the core $C(X_\A)$ to singular points. 
Hence the hypertoric Deligne-Mumford stack $X_\A$ is a symplectic resolution of the singular symplectic  variety
$\overline{X}^0$. 

Let $X_\A\times_{\overline{X}^0}X_\A$ be the fibre product.  The components whose dimension are the same as $X_\A$ are called the {\em Steinberg stack}.  Let $\mathbf{Z}$ be the union of all such components and $I\mathbf{Z}$ the inertia stack of 
$\mathbf{Z}$.  Then  $I\mathbf{Z}$ gives a correspondence on the $\bbT$-equivariant Chen-Ruan cohomology of $X_\A$ as follows. 
First the fibre product $X_\A\times_{\overline{X}^0}X_\A\subset X_\A\times X_\A$ is inside the product of $X_\A$. 
By Poincar\'e  duality, the
 inertia stack $[I\mathbf{Z}]$, taken as a cycle in $H_*(I(X_\A\times X_\A))$,  yields a class in the cohomology $H^*_{\bbT}(I(X_\A\times X_\A))$.  
Let 
$$\inv: IX_\A\to IX_\A$$
be the involution sending $(x,g)\mapsto (x,g^{-1})$ for 
$x\in X_\A$, $g\in \Aut(x)$.  
The Steinberg correspondence
\begin{equation}\label{steinberg:correspondence:introduction}
I\mathbf{Z}: H^*_{\CR,\bbT}(X_\A)\to H^*_{\CR,\bbT}(X_\A)
\end{equation}
is given by:
$$I\mathbf{Z}(\alpha)=\inv^\star Ip_{2,\star}(Ip_1^\star\alpha\cup [I\mathbf{Z}]),$$
where $p_i: X_\A\times X_\A\to X_\A$ are the projections for $i=1,2$ and $Ip_i: IX_\A\times IX_\A\to IX_\A$ are the projections on the corresponding inertia stacks. 
The Steinberg correspondence gives rise to an endomorphism on the $\bbT$-equivariant Chen-Ruan cohomology of $X_\A$.
We prove that the equivariant  quantum product by divisor classes of $X_\A$ is given by Steinberg correspondence in (\ref{steinberg:correspondence:introduction}).

%

\subsection{Small quantum product}

Let $\NE(X_{\A})\subset H_2(X_{\A},\rr)$ be the cone generated by classes of effective curves, and 
$\NE(X_{\A})_{\zz}=\{d\in H_2(X_{\A},\zz): d\in\NE(X_{\A})\}$.  We denote $R_{\bbT}:=H_{\bbT}^{*}(\pt)$ which is the $\bbT$-equivariant cohomology of a point, and let 
$$R_{\bbT}[\![Q]\!] = \left\{ \sum_{d \in \tiny\NE(X)_\zz} a_d Q^d : a_d \in R \right\}.$$ 
Here  $Q$ is a so-called \emph{Novikov variable}~\cite[III~5.2.1]{Manin}.  For $\gamma_i, \gamma_j\in H_{\CR,\bbT}^*(X_\A)$, the small equivariant quantum product is defined by:
\begin{equation}\label{quantum:product}
(\gamma_i\star \gamma_j, \gamma_k)=\sum_{d\in \tiny\NE(X)_{\zz}}Q^d\langle \gamma_i, \gamma_j, \gamma_k\rangle_{0,3,d}^{X_\A}.
\end{equation}

Here $\langle -,-,-,\rangle_{0,3,d}^{X_\A}$ are $3$-point genus $0$ $\bbT$-equivariant Gromov-Witten invariants of $X_\A$. Genus $0$ $\bbT$-equivariant Gromov-Witten invariants of $X_\A$ are defined as integrations against the $\bbT$-equivariant virtual fundamental class $[\cM_{0,n}(X_\A,d)]^{\virt}$ of the moduli stack $\cM_{0,n}(X_\A,d)$ of twisted stable maps from genus zero twisted curve with $n$-marked points to $X_\A$ of degree $d\in H_2(X_\A,\zz)$. The existence of a holomorphic symplectic form on $X_\A$ implies that the obstruction sheaf of $\cM_{0,n}(X_\A,d)$ has a trivial quotient, i.e. a cosection in the sense of \cite{KL}. This yields a {\em reduced virtual fundamental class} $[\cM_{0,n}(X_\A,d)]^{\red}$ for $\cM_{0,n}(X_\A,d)$. This reduced class satisfies 
$$[\cM_{0,n}(X_\A,d)]^{\virt}=\hbar\cdot [\cM_{0,n}(X_\A,d)]^{\red}.$$
Here $\hbar$ is the equivariant parameter for the $\cc^\times\subset \bbT=T\times \cc^\times$ action. See \S \ref{reduced:virtual:cycle} for more details. 

Let $u_i$ be a divisor class for $X_{\A}$. The divisor equation of Gromov-Witten invariants reduces the determination of quantum product by $u_i$ to two-point invariants of $X_{\A}$.  By  the relation between virtual fundamental cycle and reduced virtual cycle, we are reduced to calculate the pushforward by the evaluation map $\mathbf{ev}: \cM_{0,2}(X_\A, d)\to IX_{\A}\times IX_{\A}$:
$$\Gamma_2:=\mathbf{ev}_{\star}([\cM_{0,2}(X_\A, d)]^{\red})\subset H_*(IX_{\A}\times IX_{\A}).$$
Components of $\Gamma_2$ all have Chen-Ruan degree $\dim(X_\A)$ in $H^*_{\CR}(X_\A\times X_\A)$, which we call {\em Lagrangian cycle} in orbifold sense. 

Following \cite{MO}, such  cycle $\Gamma_2$ supports on $I\mathbf{Z}\subset I(X_\A\times X_\A)$, and  is called the {\em Steinberg Variety}. The correspondence 
\begin{equation}\label{Steinberg:correspondence:introducation2}
\Gamma_2: H_{\CR,\bbT}^*(X_{\A})\to H_{\CR,\bbT}^*(X_{\A})
\end{equation}
is given by:
$$\Gamma_2(\gamma)=\inv^\star Ip_{2,\star}(Ip_1^\star\gamma\cup \Gamma_2),$$
where $Ip_i: IX_\A\times IX_{\A}\to IX_{\A}$ is the projection for $i=1,2$,
and $\Gamma_2$ is taken as a cohomology class in $H_{\bbT}^*(IX_\A\times IX_\A)$. 
This is the Steinberg correspondence   
in (\ref{steinberg:correspondence:introduction}).
Recall that the index set of the components of 
$IX_\A$ is given by $I$.  We can write 
$$\Gamma_2:=\bigoplus_{\substack{(f_1, f_2):\\
 f_1, f_2\in I}}\Gamma_{f_1,f_2}.$$
 
A circuit $S\subset \A$ is a minimal subset $S\subset \{1,\cdots,m\}$ such that  $\{\overline{b}_{i}|i\in S\}$ are linearly dependent.  For any circuit $S\subset \A$, there is an associated curve class $\beta^S\in H_2(X_\A)$ defined as follows: there is a splitting $S=S^+\cup S^-$ and positive integers $w_i$ such that
 $$\beta^S:=\sum_{i\in S^+}w_i e_i-\sum_{i\in S^-}w_j e_j.$$
 Set $\mathbf{w}:=\{w_i|i\in S\}$.
 

The deformation of the hypertoric Deligne-Mumford stack $X_\A$ is classified by the image of the symplectic form in the second cohomology $H^2(X_\A,\cc)$, which is isomorphic to $DG(\beta)_{\cc}$.  In \S \ref{Section:deformation:hypertoricDM} we construct the following diagram:
\begin{equation}\label{deformation:diagram}
\xymatrix{
X_{\A} \ar[r]\ar[d] &~\widetilde{X}\ar[d]_{\phi}\\
0\ar[r]& DG(\beta)_{\cc}
}
\end{equation}
such that for {\em sub-regular parameters} $\lambda\in  DG(\beta)_{\cc}$ the deformation $X_\lambda:=\MHL$ has nice properties. The stack $X_\lambda$ contains a closed substack $\cM^S$ for each circuit $S\subset \A$, which is a weighted projective bundle over an affine base and the fibre normal bundle is the cotangent bundle of the weighted projective stack $\pp^{|S|-1}_{\mathbf{w}}$.  All curve classes of $X_\lambda$ lie in $\cM^S$, see \S \ref{Section:deformation:hypertoricDM}. Deformation invariance of Gromov-Witten invariants implies that  Gromov-Witten invariants in class $\beta^S$ can be computed by Gromov-Witten invariants of $T^*\pp^{|S|-1}_{\mathbf{w}}$.

Thus components of $IX_\lambda$ are all classified by the set $F=\{\frac{a_i}{w_i}| 0\leq a_i\leq w_i\}$.  Let  $l(\mathbf{w}):=\lcm(w_i|i\in S)$.  For any $f_1=\frac{a_1}{w_i}$ and  $f_2=\frac{a_2}{w_j}$, let  $\gamma(f_1,f_2)\in \{0,1,\cdots,l(\mathbf{w})-1\}$ such that \footnote{The existence of $\gamma(f_1, f_2)$ imposes constraints on the possible pairs $(f_1, f_2)$.}  $\langle\frac{\gamma(f_1,f_2)}{w_i}\rangle=f_1$, $\langle\frac{\gamma(f_1,f_2)}{w_j}\rangle=f_2$, and $w_k|\gamma(f_1,f_2)$ for $w_k\neq w_i, w_j$. 

\begin{thm}\label{Quantum:by:a:divisor}
Let $u_i$ be a divisor class of $X_\A$. Then the small equivariant quantum product  formula  by the divisor $u_i$ is given by:
$$u_i\star -=u_i\cdot -+\sum_{\substack{S\subset \A: \\
\tiny\mbox{circuit}}}
\hbar\cdot \langle u_i, \beta^{S}\rangle\cdot (-1)^d\cdot\sum_{\substack{(f_1,\sigma), (f_2,\tau)\in \bbox(\Delta_{\beta})\\}}
\frac{(Q^S)^{\gamma(f_1,f_2)+l(\mathbf{w})\cdot\delta_{\gamma(f_1,f_2),0}}}
{1-(Q^S)^{l(\mathbf{w})}}\Gamma_{f_1, f_2}(-),$$
where 
$\Gamma_{f_1, f_2}\in H^{2n}_{\CR,\bbT}(X_{\A}\times X_{\A})$ is the Steinberg correspondence in (\ref{Steinberg:correspondence:introducation2}).
\end{thm}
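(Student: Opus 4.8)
The plan is to combine the divisor equation, the reduced virtual class formalism of \S\ref{reduced:virtual:cycle}, and a deformation argument along the diagram (\ref{deformation:diagram}) that concentrates all genus-zero curve classes of the deformed stack onto the cotangent bundles $T^*\pp^{|S|-1}_{\mathbf{w}}$ attached to circuits $S\subset\A$, followed by an explicit two-point genus-zero computation on $T^*\pp^{|S|-1}_{\mathbf{w}}$ in the style of \cite{MS}, \cite{MO}. First I would apply the divisor equation to (\ref{quantum:product}): for $d\neq0$ one has $\langle u_i,\gamma_j,\gamma_k\rangle_{0,3,d}^{X_\A}=\langle u_i,d\rangle\,\langle\gamma_j,\gamma_k\rangle_{0,2,d}^{X_\A}$, so that as an operator on $H^*_{\CR,\bbT}(X_\A)$ one gets $u_i\star-=u_i\cdot-+\sum_{d\neq0}\langle u_i,d\rangle Q^d\,\mathbf{ev}_\star([\cM_{0,2}(X_\A,d)]^{\virt})(-)$, where $\mathbf{ev}\colon\cM_{0,2}(X_\A,d)\to IX_\A\times IX_\A$ and the pushforward is read as a correspondence via Poincar\'e duality. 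Then I invoke the cosection-induced identity $[\cM_{0,2}(X_\A,d)]^{\virt}=\hbar\cdot[\cM_{0,2}(X_\A,d)]^{\red}$ to pull out the overall factor $\hbar$ and reduce to $\Gamma_2=\mathbf{ev}_\star([\cM_{0,2}(X_\A,d)]^{\red})$, a Lagrangian cycle which, following \cite{MO}, is supported on $I\mathbf{Z}$ and decomposes as $\bigoplus_{f_1,f_2}\Gamma_{f_1,f_2}$. This step is essentially formal; the real content is to compute the components $\Gamma_{f_1,f_2}$ degree by degree.

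Next I would use deformation invariance of Gromov-Witten invariants along (\ref{deformation:diagram}). For sub-regular $\lambda\in DG(\beta)_\cc$, the results of \S\ref{Section:deformation:hypertoricDM} show that the only effective curve classes of $X_\lambda$ are the multiples $k\beta^S$ of circuit classes, that the corresponding curves are swept out inside the closed substacks $\cM^S$, and that the fibrewise normal geometry of $\cM^S$ is $T^*\pp^{|S|-1}_{\mathbf{w}}$. Hence $\langle\gamma_j,\gamma_k\rangle_{0,2,d}^{X_\A}$ vanishes unless $d=k\beta^S$ for some circuit $S$ and some $k\geq1$, and in that case the two-point reduced invariant is computed from the corresponding invariant of $T^*\pp^{|S|-1}_{\mathbf{w}}$, with the normal and excess contributions of $\cM^S\subset X_\lambda$ responsible for the overall sign $(-1)^d$ and, after re-applying the divisor equation inside the deformed family, for the appearance of the pairing $\langle u_i,\beta^S\rangle$. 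Care is needed to match the Novikov variable $Q^S$ with the degree variable on $T^*\pp^{|S|-1}_{\mathbf{w}}$ under the induced map on $H_2$, and to match twisted sectors: the components of $IX_\lambda$ near $\cM^S$ are labelled by $F=\{a_i/w_i:0\le a_i\le w_i\}$, which is exactly the indexing of $I\pp^{|S|-1}_{\mathbf{w}}$ needed for the age bookkeeping.

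Finally I would carry out the two-point genus-zero computation on $T^*\pp^{|S|-1}_{\mathbf{w}}$ by $\bbT$-localization, equivalently by pushing forward along $T^*\pp^{|S|-1}_{\mathbf{w}}\to\pp^{|S|-1}_{\mathbf{w}}$ since the reduced class only sees the base: a genus-zero degree-$k\beta^S$ curve joining the fixed sectors indexed by $f_1,f_2$ exists only when $k$ lies in the arithmetic progression singled out by the residue conditions $\langle\gamma/w_i\rangle=f_1$, $\langle\gamma/w_j\rangle=f_2$ and $w_k\mid\gamma$, and summing the resulting geometric series in $Q^S$ produces exactly $\frac{(Q^S)^{\gamma(f_1,f_2)+l(\mathbf{w})\delta_{\gamma(f_1,f_2),0}}}{1-(Q^S)^{l(\mathbf{w})}}$, the numerator shift $l(\mathbf{w})\delta_{\gamma(f_1,f_2),0}$ encoding that the constant map is excluded when $\gamma(f_1,f_2)=0$. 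Summing over all circuits $S\subset\A$ and all pairs $(f_1,\sigma),(f_2,\tau)\in\bbox(\Delta_\beta)$, and identifying the operator coming from the $\cM^S$-piece of $\Gamma_2$ with the Steinberg correspondence component $\Gamma_{f_1,f_2}$ of (\ref{Steinberg:correspondence:introducation2}), yields the stated formula.

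The main obstacle is the combination of the second and third steps: proving rigorously that $X_\lambda$ has no curve classes other than the $k\beta^S$, controlling the reduced virtual class under the degeneration (including the excess-intersection and normal-bundle contributions that produce the sign $(-1)^d$), and then performing the weighted-projective-stack two-point computation with the correct twisted-sector matching so that the arithmetic-progression constraint and the geometric series emerge precisely as in the theorem. By contrast, the divisor equation and the extraction of $\hbar$ from the reduced virtual class are routine.
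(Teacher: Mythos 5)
Your proposal is correct and follows essentially the same route as the paper: divisor equation plus $[\cM_{0,2}(X_\A,d)]^{\virt}=\hbar\,[\cM_{0,2}(X_\A,d)]^{\red}$, deformation to a sub-regular parameter so that all compact curves lie in $\cM^S$ with fibrewise geometry $T^*\pp^{|S|-1}_{\mathbf{w}}$, and a $\bbT$-localization computation there (unbroken maps only, cyclic covers of one-dimensional orbits) producing the arithmetic-progression constraint on the degree and the geometric series, identified with the Steinberg components $\Gamma_{f_1,f_2}$. The one step you leave implicit --- how deformation invariance applies to the reduced class --- is handled in the paper exactly along the lines you flag as the main obstacle, via the identification $[\cM_{0,2}(X_\A,d)]^{\red}=[\cM_{0,2}(\mathcal{V}_0,d)]^{\virt}$ for the family over a generic line in $(\mathfrak{t}_\cc^{m-d})^*$ and then moving that line so it meets the root hyperplanes at distinct points.
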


Theorem \ref{Quantum:by:a:divisor} is proved by a detail analysis of two point Gromov-Witten invariants of the hypertoric Deligne-Mumford stack $X_\A$.  Under evaluation maps, the image of the reduced virtual fundamental cycle $[\cM_{0,n}(X_\A,d)]^{\red}$ will have orbifold diegree $\dim(X_\A)$ inside $I(X_\A\times X_\A)$, hence the calculation of two point Gromov-Witten invariants is reduced to Steinberg correspondence in (\ref{Steinberg:correspondence:introducation2}), see \S \ref{Section:proof:main:result} for more details.  

Although the virtual fundamental cycle  $[\cM_{0,n}(X_\A,d)]^{\red}$ is a routine generalization from the case when $X_\A$ is a  smooth variety, the calculation of two point Gromov-Witten invariants is given by torus localization in orbifold Gromov-Witten theory, which seems to be a new technique in this setting to calculate the small quantum product by a divisor class.

\subsection{Lawrence toric geometry}
We use the geometry of Lawrence toric Deligne-Mumford stacks to give a formula for the small quantum cohomology of $X_\A$. 

Theorem \ref{Quantum:by:a:divisor} determines the small equivariant quantum products by any divisor class $u_i\in H^2(X_{\A})$.  In order to have a formula for the small  quantum cohomology ring of $X_\A$, we prove that the small quantum ring of $X_\A$ is isomorphic to the small quantum ring of the corresponding Lawrence toric Deligne-Mumford stack $X_{\mathbf{\theta}}:=\chi(\mathbf{\Sigma_{\theta}})$, see Corollary \ref{quantum:ring:isomorphism}.  This is derived from a more general result that equates  the decendant Gromov-Witten invariants of $X_\theta$  with the decendant Gromov-Witten invariants of $X_\A$, see Proposition \ref{invariants:compare}.

Recall that in the Lawrence toric fan 
$\Sigma_{\theta}$, the lattice is given by 
$N_{L}$, and 
$$\beta_{L}: \zz^{2m}\to N_{L}$$
is given by integral vectors $\{b_{L,1},\cdots,
b_{L,m},b'_{L,1},\cdots,
b^{'}_{L,m}\}\subset N_{L}$, see \S \ref{LawrenceDMstack}.
The map $\beta_{L}$ is called the Lawrence lifting of $\beta: \zz^m\to N$. 
Let 
$$R_{\bbT}[[Q]][N_{\mathbf{\Sigma_{\theta}}}]^{\widehat{}}$$
be the ring generated over $R_\bbT[[Q]]$ by symbols $\{y^c| c\in N_L\}$, with the following multiplication: for $c_1, c_2\in N_L$, $$y^{c_1}\star y^{c_2}=Q^{l(c_1,c_2)}y^{c_1+c_2},$$
where $l(c_1, c_2)$ is defined as follows. Suppose that $\sigma_1, \sigma_2, \sigma$ are cones in $\Sigma_\theta$ such that $c_1\in \sigma_1, c_2\in \sigma_2, c_1+c_2\in \sigma$. We can write $c_1=\sum_i (c_{1i}b_{L,i}+c_{1i}' b_{L,i}')$,  $c_2=\sum_i (c_{2i}b_{L,i}+c_{2i}' b_{L,i}')$,  $c_1+c_2=\sum_i (c_{12i} b_{L,i}+c_{12i}' b_{L,i}')$, where it is understood that $c_{1i}=0$ if $b_{L,i}\notin \sigma_1$, $c_{1i}'=0$ if $b_{L, i}'\notin \sigma_1$, and likewise for $c_{2i}, c_{2i}', c_{12i}, c_{12i}'$. Put $$l(c_1,c_2):=\sum_j ((c_{1j}+c_{2j}-c_{12j})e_j+(c_{1j}'+c_{2j}'-c_{12j}')e'_j)\in \mathbb{Q}^m\oplus \mathbb{Q}^m.$$ 
Note that $l(c_1,c_2)=0$ if $c_1, c_2$ belong to the same cone.

\begin{thm}\label{main2:small:quantum:ring}
Let $X_\A$ be the hypertoric Deligne-Mumford stack associated to the stacky hyperplane arrangement $\A$. Then the equivariant small quantum cohomology of $X_\A$ is $$QH^*_{\bbT}(X_\A)\cong \frac{R_{\bbT}[[Q]][N_{\mathbf{\Sigma_{\theta}}}]^{\widehat{}}}{\langle y^{b_{L, i}}+y^{b'_{L,i}}-\hbar | i=1,...,m\rangle}.$$
\end{thm}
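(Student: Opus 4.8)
The plan is to push the computation over to the associated Lawrence toric Deligne--Mumford stack, where the statement becomes a (non-compact, $\bbT$-equivariant) instance of the Batyrev / quantum Stanley--Reisner presentation of the quantum cohomology of a toric stack. By Corollary~\ref{quantum:ring:isomorphism} --- which is itself deduced from the comparison of descendant Gromov--Witten invariants in Proposition~\ref{invariants:compare} --- there is an isomorphism of $R_{\bbT}[[Q]]$-algebras $QH^*_{\bbT}(X_\A)\cong QH^*_{\bbT}(X_\theta)$, where $X_\theta=\chi(\mathbf{\Sigma_{\theta}})$. So it suffices to exhibit the asserted presentation for $QH^*_{\bbT}(X_\theta)$, and from here on everything is toric.

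Next I would separate the classical and quantum content. Setting $Q=0$, the ring $R_{\bbT}[[Q]][N_{\mathbf{\Sigma_{\theta}}}]^{\widehat{}}$ degenerates to the $\bbT$-equivariant version of the Borisov--Chen--Smith deformed group ring on $N_L$ from \cite{BCS}, because the Novikov twist $y^{c_1}\star y^{c_2}=Q^{l(c_1,c_2)}y^{c_1+c_2}$ reduces to the classical convolution product whenever $c_1,c_2$ lie in a common cone of $\Sigma_\theta$ (where $l(c_1,c_2)=0$) and kills the monomial otherwise; thus the twist is exactly the quantum deformation of the Stanley--Reisner relations. So the $Q=0$ part of the theorem is a purely combinatorial identification: the quotient by $\langle y^{b_{L,i}}+y^{b'_{L,i}}-\hbar\rangle$ must reproduce the Borisov--Chen--Smith presentation of $H^*_{\CR,\bbT}(X_\theta)$. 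Here the Lawrence structure is essential: the rays of $\Sigma_\theta$ occur in pairs $\{b_{L,i},b'_{L,i}\}$, the extra $\cc^\times\subset\bbT=T\times\cc^\times$ scales precisely the $i$-th cotangent fibre, and the $(\cc^\times)^m$-weights on the two coordinates are opposite while the $G$-characters on the two coordinate lines cancel (the holomorphic symplectic form is $\bbT\times G$-semiinvariant of weight $\hbar$); together these give the $\bbT$-equivariant first Chern class identity $D_{L,i}+D'_{L,i}=\hbar$, and one checks that the remaining toric linear relations are then consequences in this particular (Lawrence) setting.

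For the quantum corrections, i.e.\ that $QH^*_{\bbT}(X_\theta)$ genuinely carries this Batyrev presentation and not merely its $Q=0$ specialization, I would invoke the mirror theorem for toric Deligne--Mumford stacks (Coates--Corti--Iritani--Tseng; Iritani): it identifies the small quantum cohomology with the Jacobian ring of the Landau--Ginzburg mirror, and that Jacobian ring is $R_{\bbT}[[Q]][N_{\mathbf{\Sigma_{\theta}}}]^{\widehat{}}$ modulo the equivariant linear relations, after restriction to the small-parameter locus and the usual identification of Novikov variables. As an internal consistency check, restricting this product to multiplication by the divisor classes $y^{b_{L,i}}$ must reproduce Theorem~\ref{Quantum:by:a:divisor} transported through Corollary~\ref{quantum:ring:isomorphism} --- equivalently, the toric divisor equation on $X_\theta$.

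I expect the main obstacle to be the last step for the \emph{non-proper} stack $X_\theta$: the toric mirror theorem and the accompanying Givental-style reconstruction are usually formulated for compact (at best semi-projective) targets, whereas $X_\theta$ is only semi-projective. One must verify that the $\bbT$-fixed loci of the moduli stacks $\cM_{0,n}(X_\theta,d)$ of twisted stable maps are proper --- so that the equivariant invariants, the $I$-function and the $J$-function are well-defined --- and that every effective curve class of $X_\theta$ is supported on the compact weighted-projective-bundle loci attached to the circuits $S\subset\A$, which is exactly the mechanism already exploited in Proposition~\ref{invariants:compare}; then the mirror theorem applies after equivariant localization. A secondary, essentially combinatorial obstacle is matching the multi-index Novikov exponent $l(c_1,c_2)\in\qq^m\oplus\qq^m$ with the symplectic degrees of curve classes under $H_2(X_\theta)\cong DG(\beta_L)$, and checking that the Novikov-adic completion built into $R_{\bbT}[[Q]]$ is compatible with the completion $[\,\widehat{\phantom{n}}\,]$ defining $R_{\bbT}[[Q]][N_{\mathbf{\Sigma_{\theta}}}]^{\widehat{}}$.
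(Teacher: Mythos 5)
Your overall route is the same as the paper's: reduce to the Lawrence toric stack $X_\theta$ via Corollary \ref{quantum:ring:isomorphism} and then extract the presentation from the toric mirror theorem of \cite{CCIT}. But there is a genuine gap at exactly the decisive step. The mirror theorem identifies the $\bbT$-equivariant quantum ring of $X_\theta$ along $H^{\leq 2}_{orb}$ with Batyrev's quantum ring only \emph{after composing with the mirror map}; this is not ``the usual identification of Novikov variables'' that can be taken for granted. For a general toric Deligne--Mumford stack the mirror map is a nontrivial power-series change of variables (possibly with components along the age-one twisted sectors), and if it were nontrivial here the asserted presentation of the \emph{small} quantum ring in the Novikov variable $Q$ would fail as written. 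The only part of the paper's proof that is an actual argument, rather than a citation, is the verification that the mirror map restricted to $H^{\leq 2}$ is the identity: using the computation of the restricted mirror map in \cite[Section 6.3]{CCLT}, one shows the relevant sets $\Omega_i^{X_\theta}$ are empty, and this is precisely where the Lawrence structure enters --- for any effective class $\beta$ one has $\langle D_j,\beta\rangle+\langle D_j',\beta\rangle=0$, so a class with $\langle D_i,\beta\rangle<0$ and all other pairings nonnegative forces $\langle D_j,\beta\rangle=0$ for $j\neq i$, hence $\langle D_i,\beta\rangle b_i=0$, a contradiction. Your proposal neither performs nor identifies this check; the phrase ``usual identification of Novikov variables'' assumes what must be proved.

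By contrast, the obstacle you single out as the main one --- non-properness of $X_\theta$ and properness of the $\bbT$-fixed loci --- is not where the work lies: the equivariant framework of \cite{CCIT} applied to semi-projective toric stacks, already used throughout \S \ref{Section:GW:theory:hypertoricDM}, covers it, and the paper does not need to revisit it. Your $Q=0$ consistency check and the comparison with Theorem \ref{Quantum:by:a:divisor} are reasonable sanity checks but are strictly weaker than the paper's ``direct computation'' that Batyrev's quantum ring, restricted to parameters in $H^{\leq 2}\subset H^{\leq 2}_{orb}$, coincides with the ring in Theorem \ref{main2:small:quantum:ring}; together with the missing mirror-map triviality, these are the two items you would still need to supply to complete the argument.
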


Theorem \ref{main2:small:quantum:ring} is obtained by calculating the small quantum cohomology ring of the Lawrence toric Deligne-Mumford stack $X_{\mathbf{\theta}}$. The presentation above is obtained from calculations with the (extended) $I$-function of $X_{\mathbf{\theta}}$. The isomorphism follows from toric mirror theorem \cite{CCIT} and calculations of the mirror map along $H^2$, see \S \ref{sec:pf_ring_presentation}.

\begin{rmk}
\hfill
\begin{enumerate}
\item
Theorem \ref{main2:small:quantum:ring} specializes to the calculation in \cite{MS}.  If the hypertoric Deligne-Mumford stack $X_\A$ is a smooth variety, which means that the hyperplane arrangement $\mathcal{H}$ is unimodular, see \cite{HS}, then the corresponding Lawrence toric variety 
$X_\theta$ is a smooth variety. 
There is an one-to-one correspondence between the generators $d\in H^2(X_\theta,\zz)\cong DG(\beta)_\zz$ and the circuits $S\subset \A$.  The splitting $S=S^+\cup S^{-}$ is actually given by 
$S^+=\{i\in S| \langle D_i,d\rangle>0\}$ and $S^-=\{i\in S| \langle D_i,d\rangle\}<0)$. 
By Poincar\'e  duality, such a generator $d$ determines a curve class 
$\beta^{S}\in H_2(X_\theta,\zz)$.
The quantum Stanley-Reisner ideal $QSR$ in \cite[Theorem 1.1]{MS} can be obtained directly from Theorem \ref{main2:small:quantum:ring}. 
\item
The presentation in Theorem \ref{main2:small:quantum:ring} may be rewritten in the form of generators and relations, along the line of \cite[Theorem 4.9]{TW}.
\end{enumerate}
\end{rmk}

\subsection{Further studies}

The monodromy conjecture for symplectic resolutions was formulated by Braverman-Maulik-Okounkov in \cite{BMO}. 
Roughly speaking a compactified K\"ahler moduli space $\mathcal{M}$ have large radius points $0, \infty$ such that the corresponding two symplectic Deligne-Mumford stacks $X_1, X_2$ are birational equivalent.  The derived categories of them are expected to be equivalent:
$$D^b(X_1)\cong D^b(X_2).$$
The equivalence is given by a choice of path from $0$ to $\infty$ and thus giving a map 
$$\rho: \pi_{1}(\mathcal{M})\to \mbox{Aut}(D^b(X_i)).$$  
Moreover they expect that this map is considered in the level of K-theory
$$\rho: \pi_{1}(\mathcal{M})\to \mbox{Aut}(K_0(X_i)).$$ 
The monodromy conjecture says that the mondromy of the quantum connection 
$\bigtriangledown$  for $X_i$ is the same as the above mondromy given by the equivalence on the K-theory. 

Note that this  conjecture is already known in the case of crepant birational transformation of toric Deligne-Mumford stacks. 
In \cite{CIJ}, the crepant transformation conjecture in Gromov-Witten theory was proved for a crepant birational transformation of toric Deligne-Mumford stacks given by single wall crossing.   Let $X_+\dasharrow X_-$ be a crepant birational map between two smooth toric Deligne-Mumford stacks.  They are derived equivalent, which is given by Fourier-Mukai transform, see \cite{Kawamata} and \cite{CIJS}.   In \cite{CIJ}, the authors prove that the equivariant Fourier-Mukai transformation on the K-theory matches the analytic continuation of the  $I$-function, hence matches the quantum connection which is determined by the $I$-function.  Fourier-Mukai transformation depends on a choice of path in the mirror of the toric Deligne-Mumford stacks, and applying twice of Fourier-Mukai gives the monodromy.  The matching of the Fourier-Mukai transform with analytic continuation of quantum connections implies that their corresponding monodromies are the same. 

In \cite{JT3}, we will study the case of wall crossing of hypertoric Deligne-Mumford stacks by varying the stability parameters 
$\theta$.  The wall crossing of hypertoric Deligne-Mumford stacks is actually given by a single wall crossing of Lawrence toric Deligne-Mumford stacks studied in \cite{CIJ}.  Hence  the wall crossing is given by the Mukai type flops of hypertoric
Deligne-Mumford stacks.  Several authors, see \cite{Cautis}, \cite{Cautis2}, \cite{Kawamata}, already proved that their derived categories are equivalent, and the kernel is also given by Fourier-Mukai type transform.   We expect to prove that the Fourier-Mukai transform matches the analytic continuation of quantum connections of the hypertoric Deligne-Mumford stack.

The Monodromy conjecture works for any  two crepant  birational transformation of   symplectic Deligne-Mumford stacks.  One type of such birational equivalence is the Mukai type  flops, which are studied by many mathematician, see for instance \cite{Cautis}, \cite{Cautis2}, \cite{Kawamata}.   In some nice situation, their derived categories are equivalent and Fourier-Mukai type  transformation gives the equivalence.  These are more general cases than hypertoric Deligne-Mumford stacks. 
We hope that our approach in this project may shed light on proving the conjecture  in more general cases.

\subsection{Outline}
The rest of this paper is organized as follows.  The notion of hypertoric Deligne-Mumford stacks and their properties are reviewed in \S \ref{hyperplane}. 
In \S \ref{Equivariant:Chen-Ruan:cohomology} we determine the equivariant Chen-Ruan cohomology of hypertoric Deligne-Mumford stacks. Gromov-Witten theory and reduced virtual fundamental cycles are reviewed in \S \ref{GW:theory}. 
In \S \ref{Section:deformation:hypertoricDM} we discuss the deformation of hypertoric Deligne-Mumford stacks by sub-regular parameter under the moment maps.  We discuss the Steinberg correspondence in \S \ref{Section:Steinberg:correspondence} for symplectic resolution of  hypertoric Deligne-Mumford stacks.   We prove Theorem \ref{Quantum:by:a:divisor}  in \S \ref{Section:proof:main:result}; and in 
\S \ref{Section:GW:theory:hypertoricDM} we study Gromov-Witten invariants of hypertoric Deligne-Mumford stacks. We  give a ring structure for the small  equivariant quantum cohomology of hypertoric Deligne-Mumford stacks and calculate two examples. 

\subsection{Set-up}
We work over the field of complex numbers. Cohomology groups are taken with rational coefficients. 

$N$ is a finitely generated abelian group of rank $d$. 

$N\to \overline{N}:=N/N_{\tiny\mbox{tor}}$ is the natural quotient map.

$\beta: \mathbb{Z}^{m}\to N$ is a group homomorphism determined by sending the standard basis of $\mathbb{Z}^m$ to a collection of nontorsion integral vectors $\{b_{1},\cdots,b_{m}\}\subseteq N$.

$\beta^{\vee}: \mathbb{Z}^{m}\to DG(\beta)$ is the Gale dual of $\beta$ as constructed in \cite{BCS}.
 
 For cones $\sigma_{1},\sigma_{2}$ in $\mathbb{R}^{d}$,
we use $\sigma_{1}\cup\sigma_{2}$ to represent the set of union of the 
generators of $\sigma_{1}$ and $\sigma_{2}$.
For a positive integer $m$, we use $[m]$ to represent the set 
$\{1,\cdots,m\}$.

For a rational number $c$, let $\langle c\rangle$ denote the fractional part of $c$, $\lceil c\rceil$ the ceiling of $c$, and $\lfloor c\rfloor$ the floor of $c$.

For a stacky hyperplane arrangement $\A$, we denote by 
$X_\A$ the associated hypertoric Deligne-Mumford stack throughout the paper.
For a sub-regular parameter $\lambda$ under the moment map, we denote by $X_\lambda:=\MHL$ the deformation of $X_\A$ by this sub-regular parameter $\lambda$. 
We set $\mbox{Box}:=\mbox{Box}(\Delta_{\beta})$, the box element of the multi-fan $\Delta_{\beta}$.

For a smooth Deligne-Mumford stack $X$ with a torus $\bbT$-action, we use $H_{\CR, \bbT}^*(X)$ to represent the equivariant Chen-Ruan cohomology of $X$,  $QH^*_{\bbT}(X)$ the equivariant small quantum cohomology ring, and 
$QH^*_{\bbT, \bbig}(X)$ the equivariant big quantum cohomology ring. The Chen-Ruan orbifold cup product is denoted by $\cdot$, the small quantum product is denoted by $\star$, and the big quantum product is denoted by $\star_{\bbig}$. 

\subsection{Acknowledgments}

We thank D. Edidin, N. Proudfoot and M. MacBreen for the discussions on symplectic resolution and K-theory of hypertoric Deligne-Mumford stacks.  Y. J. especially thanks Gufang Zhao to draw his attention to the MIT-Northeastern seminar series on quantum cohomology, geometric representation theory and monodromy conjecture.  Both authors are partially supported by Simons Foundation Collaboration Grants. 

\section{Preliminaries} 
\subsection{Hypertoric geometry}\label{hyperplane}
We recall the definition of hypertoric Deligne-Mumford stacks in sense of \cite{JT}.
\subsubsection{Stacky hyperplane arrangements}\label{Section:stacky:hyperplane:arrangement}
We introduce stacky hyperplane arrangements and explain their relations to extended stacky fans. 

Let $N$ be a finitely generated abelian group of rank $d$ and  $\beta:\mathbb{Z}^m\to N$  a map given by nontorsion
integral vectors $\{b_1, \cdots,b_m\}$. We have the following exact
sequences:
\begin{equation}\Label{exact1}
0\longrightarrow DG(\beta)^{*}\stackrel{(\beta^{\vee})^{*}}{\longrightarrow}
\mathbb{Z}^{m}\stackrel{\beta}{\longrightarrow} N\longrightarrow
Coker(\beta)\longrightarrow 0,
\end{equation}
\begin{equation}\Label{exact2}
0\longrightarrow N^{*}\longrightarrow
\mathbb{Z}^{m}\stackrel{\beta^{\vee}}{\longrightarrow}
DG(\beta)\longrightarrow Coker(\beta^{\vee})\longrightarrow 0,
\end{equation}
where $\beta^{\vee}$ is the Gale dual of $\beta$ (see \cite{BCS}). 
The map $\beta^{\vee}$ is given by the integral vectors $\{a_1,\cdots,a_m\}\subseteq DG(\beta)$. 
Choose a generic element $\theta\in DG(\beta)$ which lies in the image of $\beta^{\vee}$ and let
$\psi:=(r_{1},\cdots,r_{m})$ be a lifting of $\theta$ in
$\mathbb{Z}^{m}$ such that $\theta=-\beta^{\vee}\psi$. Note that $\theta$ is
generic if and only if it is not in any hyperplane of the
configuration determined by $\beta^{\vee}$ in $DG(\beta)_{\mathbb{R}}$. Let 
$M=N^{*}$ be the dual of $N$. $M_{\mathbb{R}}=M\otimes_{\mathbb{Z}}\mathbb{R}$ is a $d$-dimensional $\mathbb{R}$-vector space. 
Associated to $\theta$ there is a hyperplane arrangement
$\mathcal{H}=\{H_{1},\cdots,H_{m}\}$ in $M_{\mathbb{R}}$ defined by
$H_{i}$  the hyperplane
\begin{equation}\Label{arrangement}
H_{i}:=\{v\in M_{\mathbb{R}}|<b_{i},v>+r_{i}=0\}\subset M_{\mathbb{R}}.
\end{equation}
So (\ref{arrangement}) determines hyperplane arrangements in $M_{\mathbb{R}}$, up to translation
induced by the choice of the lifting $\psi:=(r_{1},\cdots,r_{m})$.

\begin{defn}
We call
$\mathcal{A}:=(N,\beta,\theta)$ a {\em stacky hyperplane arrangement}.
\end{defn}

It is
well-known that hyperplane arrangements determine the topology of
hypertoric varieties \cite{BD}. 
Let
$$\mathbf{\Gamma}=\bigcap_{i=1}^{m}F_{i}, \text{ where }F_{i}=\{v\in M_{\mathbb{R}}|<b_{i},v>+r_{i}\geq 0\}.$$ 
Let $\Sigma$ be the normal fan of 
$\mathbf{\Gamma}$ in $M_{\mathbb{R}}=\mathbb{R}^{d}$ with one
dimensional rays generated by
$\overline{b}_{1},\cdots,\overline{b}_{n}$. By reordering, we may
assume that $H_{1},\cdots,H_{n}$ are the hyperplanes that bound
the polytope $\mathbf{\Gamma}$, and $H_{n+1},\cdots,H_{m}$ are the
other hyperplanes. Then we have an extended stacky fan
$\mathbf{\Sigma}=(N,\Sigma,\beta)$ in the sense of \cite{Jiang}, where
$\beta:
\mathbb{Z}^{m}	\to N$ is given by
$\{b_{1},\cdots,b_{n},b_{n+1},\cdots,b_{m}\}\subset N$, and  $\{b_{n+1},\cdots,b_{m}\}$
are the extra data. 

By \cite{Jiang}, the extended stacky fan $\mathbf{\Sigma}$
determines a toric Deligne-Mumford stack
$\mathcal{X}(\mathbf{\Sigma})$. Its coarse moduli space is the
toric variety corresponding to the normal fan of
$\mathbf{\Gamma}$. According to \cite{BD}, a hyperplane arrangement
$\mathcal{H}$ is {\em simple} if the codimension of the nonempty
intersection of  any $l$ hyperplanes is $l$. A hypertoric variety
is the coarse moduli space of an  {\em orbifold} if the corresponding hyperplane arrangement
is simple.

\begin{rmk}
Consider the map $\mathbb{Z}^n\to N$ given by $\{b_1,...,b_n\}$. Then $(N,\Sigma,\mathbb{Z}^n\to N)$ is a stacky fan in the sense of  \cite{BCS}. The associated toric Deligne-Mumford stack is isomorphic to  $\mathcal{X}(\mathbf{\Sigma})$. 
\end{rmk}

\subsubsection{Lawrence toric Deligne-Mumford stacks}\label{LawrenceDMstack}
Consider the Gale dual map
$\beta^{\vee}: \mathbb{Z}^{m}\to
DG(\beta)$ in (\ref{exact2}).  We denote 
the Gale dual map of 
$$ \mathbb{Z}^{m}\oplus \mathbb{Z}^{m}\stackrel{(\beta^{\vee},-\beta^{\vee})}{\longrightarrow}
DG(\beta)$$ 
by 
\begin{equation}\Label{betal}
\beta_{L}: \mathbb{Z}^{2m}\rightarrow N_{L},
\end{equation} \\
where $\overline{N}_{L}$
is a lattice of dimension $2m-(m-d)$. The map $\beta_{L}$ is given by the integral vectors  $\{b_{L,1},\cdots,
b_{L,m},b'_{L,1},\cdots,
b^{'}_{L,m}\}$ and $\beta_{L}$ is called  the Lawrence lifting of $\beta$.

\begin{rmk}\label{lawrencermk-s}
By \cite[Remark 2.3]{JT},  the lattice $N_L=N\oplus \zz^m$ and 
$\{b_{L,1},\cdots,
b_{L,m},b'_{L,1},\cdots,
b^{'}_{L,m}\}$ are the vectors $$\{(b_{1},e_{1}),\cdots,
(b_{m},e_{m}),(0,e_{1}),\cdots,
(0,e_{m})\},$$ where $\{e_{i}\}$ are the standard bases of $\mathbb{Z}^{m}$.
\end{rmk}

Given the generic element $\theta$, let $\overline{\theta}$
be the natural image of $\theta$ under the projection $DG(\beta)\rightarrow \overline{DG(\beta)}$. 
Then the map $\overline{\beta}^{\vee}: \mathbb{Z}^{m}\rightarrow \overline{DG(\beta)}$ is given by $\overline{\beta}^{\vee}=(\overline{a}_{1},\cdots,\overline{a}_{m})$. For any basis of $\overline{DG(\beta)}$
of the form $C=\{\overline{a}_{i_{1}},\cdots,\overline{a}_{i_{m-d}}\}$, there exist unique
$\lambda_{1},\cdots,\lambda_{m-d}$ such that
$$\overline{a}_{i_{1}}\lambda_{1}+\cdots+\overline{a}_{i_{m-d}}\lambda_{m-d}=\overline{\theta}.$$
Let $\mathbb{C}[z_{1},\cdots,z_{m},w_{1},\cdots,w_{m}]$ be the coordinate ring of $\mathbb{C}^{2m}$. Let
$$\sigma(C,\theta)=\{\overline{b}_{L,i_{j}}~|\lambda_{j}>0\}\sqcup\{\overline{b}'_{L,i_{j}}|~\lambda_{j}<0\} \quad \text{and} \quad 
C(\theta)=\{z_{i_{j}}~|\lambda_{j}>0\}\sqcup\{w_{i_{j}}|~\lambda_{j}<0\}.$$
We put
\begin{equation}\Label{irrelevant}
\mathbf{\mathcal{I}}_{\theta}:=\left\langle\prod
C(\theta)|~C~\text{is a basis of}~\overline{DG(\beta)}\right\rangle,
\end{equation}
and 
\begin{equation}\Label{fan}
\Sigma_{\theta}:=\{\overline{\sigma}(C,\theta):~C~\text{is a basis of}~\overline{DG(\beta)}\},
\end{equation}
where $\overline{\sigma}(C,\theta)=\{\overline{b}_{L,1},\cdots,\overline{b}_{L,m},\overline{b}'_{L,1},\cdots,\overline{b}'_{L,m}\}\setminus\sigma(C,\theta)$ 
is the complement of $\sigma(C,\theta)$ and corresponds to a maximal cone in $\Sigma_{\theta}$.
From \cite{HS}, $\Sigma_{\theta}$ is the fan of a Lawrence toric variety $X(\Sigma_{\theta})$ corresponding to
$\theta$ in the lattice $\overline{N}_{L}$, and $\mathcal{I}_{\theta}$ is the irrelevant ideal.  
The construction above establishes the following
\begin{prop}[\cite{JT}, Proposition 2.5]\Label{lawrencestacky}
A stacky hyperplane arrangement $\mathcal{A}=(N,\beta,\theta)$
also gives a stacky fan $\mathbf{\Sigma_{\theta}}=(N_{L},\Sigma_{\theta},\beta_{L})$
which is called a Lawrence stacky fan.  
\end{prop}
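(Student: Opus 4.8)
The plan is to verify directly that the triple $\mathbf{\Sigma_\theta}=(N_L,\Sigma_\theta,\beta_L)$ satisfies the axioms of a stacky fan in the sense of \cite{Jiang} (working in the extended framework, where the map out of $\zz^{2m}$ may include generators beyond the rays; this is harmless). Concretely, four things must be checked: that $N_L$ is a finitely generated abelian group; that $\beta_L\colon\zz^{2m}\to N_L$ has finite cokernel; that $\Sigma_\theta$ of (\ref{fan}) is a rational simplicial fan in $(\overline{N}_L)_{\rr}$; and that every ray of $\Sigma_\theta$ is spanned by one of the images $\overline{b}_{L,i}$, $\overline{b}'_{L,i}$ of the standard basis of $\zz^{2m}$ under $\beta_L$. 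I would dispatch the two algebraic conditions first and then treat the combinatorial--geometric one, which carries the real content.

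For the algebraic conditions: $N_L$ is finitely generated because, by (\ref{betal}), it is the Gale dual of $(\beta^{\vee},-\beta^{\vee})$, and Gale duality of a homomorphism out of a free abelian group produces a finitely generated group. For the cokernel, I would use the explicit description in Remark \ref{lawrencermk-s}: $N_L=N\oplus\zz^m$ and $\beta_L$ is given by the vectors $(b_i,e_i)$ and $(0,e_i)$, $i=1,\dots,m$. Then $\im(\beta_L)$ contains every $(0,e_i)$, hence $0\oplus\zz^m$, and every difference $(b_i,e_i)-(0,e_i)=(b_i,0)$, hence $\im(\beta)\oplus 0$; since $\beta$ has finite cokernel by hypothesis, $\im(\beta)$ has finite index in $N$, and so $\im(\beta_L)$ has finite index in $N_L$. (The dimensions are consistent: $\overline{N}_L$ has rank $d+m$, and since $|\sigma(C,\theta)|=|C|=\rk\overline{DG(\beta)}=m-d$, the complement $\overline{\sigma}(C,\theta)$ in the list of $2m$ Lawrence vectors has $2m-(m-d)=m+d$ elements, exactly the number needed to span a full-dimensional simplicial cone.)

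For the geometric condition I would appeal to the Lawrence construction of Hausel--Sturmfels \cite{HS} --- which is just what is invoked in the paragraph preceding the statement --- and record where the genericity of $\theta$ enters. For each basis $C=\{\overline{a}_{i_1},\dots,\overline{a}_{i_{m-d}}\}$ of $\overline{DG(\beta)}$, the coefficients $\lambda_1,\dots,\lambda_{m-d}$ expressing $\overline{\theta}$ in that basis are unique, and genericity --- $\theta$ lies in no hyperplane of the configuration determined by $\beta^{\vee}$ in $DG(\beta)_{\rr}$ --- forces every $\lambda_j\neq 0$, so the sign pattern, hence $\sigma(C,\theta)$ and its complement $\overline{\sigma}(C,\theta)$, is well defined. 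One then checks, as in \cite{HS}: (i) the $m+d$ vectors of $\overline{\sigma}(C,\theta)$ are $\rr$--linearly independent and span $(\overline{N}_L)_{\rr}$ --- a standard Gale-duality fact, since $\sigma(C,\theta)$ is complementary to a spanning configuration among the $2m$ Lawrence vectors whose linear relations are governed by $C$; (ii) the family $\{\overline{\sigma}(C,\theta)\}$ is closed under passage to faces; and (iii) any two maximal cones $\overline{\sigma}(C,\theta)$ and $\overline{\sigma}(C',\theta)$ meet in a common face. Equivalently, $\Sigma_\theta$ is the fan of the GIT quotient $\cc^{2m}\mod\Hom(DG(\beta),\cc^{\times})$ at the stability $\theta$, with irrelevant ideal $\mathcal{I}_\theta$ of (\ref{irrelevant}), and this quotient is an orbifold --- so the fan is simplicial --- exactly because $\mathcal{H}$ is simple, i.e.\ $\theta$ is generic. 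Finally, since by construction $\overline{\sigma}(C,\theta)$ is a subset of $\{\overline{b}_{L,1},\dots,\overline{b}_{L,m},\overline{b}'_{L,1},\dots,\overline{b}'_{L,m}\}$, every ray of $\Sigma_\theta$ is one of the $\overline{b}_{L,i}$, $\overline{b}'_{L,i}$; combined with the algebraic facts, this exhibits $(N_L,\Sigma_\theta,\beta_L)$ as a stacky fan, which we name the Lawrence stacky fan.

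I expect the main obstacle to be step (iii) --- the fan axiom that the maximal cones $\overline{\sigma}(C,\theta)$ glue along common faces. This is the substantive point; it is where the genericity of $\theta$ (equivalently the simplicity of $\mathcal{H}$) is indispensable, and it is the heart of the Lawrence construction of \cite{HS}. Everything else --- finite generation and finite cokernel of $\beta_L$, the identification of the rays, and the simpliciality of each individual cone --- is routine bookkeeping with Gale duality once the fan structure of $\Sigma_\theta$ is in hand.
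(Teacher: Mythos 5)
Your proof is correct and follows essentially the same route as the paper, which offers no separate argument beyond the construction itself ("The construction above establishes the following"), relying on the Hausel--Sturmfels Lawrence construction for the fan structure of $\Sigma_{\theta}$ and on the Gale-dual description of $\beta_L$ exactly as you do. Your explicit verification of finite cokernel via $N_L=N\oplus\zz^m$ and of simpliciality via genericity of $\theta$ just spells out the bookkeeping the paper (and \cite{JT}) leaves implicit.
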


\begin{defn}[\cite{JT}, Definition 2.6]\label{lawrencetoricdmstack}
The toric Deligne-Mumford stack $\mathcal{X}(\mathbf{\Sigma_{\theta}})$ is called the Lawrence toric Deligne-Mumford  stack.
\end{defn}

For  the map $\beta_{L}^{\vee}:
\mathbb{Z}^{m}\oplus \mathbb{Z}^{m}\to DG(\beta)$ given by 
$(\beta^{\vee},-\beta^{\vee})$, there is an exact sequence
\begin{equation}\Label{exact3}
0\longrightarrow N_{L}^{*}\longrightarrow
\mathbb{Z}^{2m}\stackrel{\beta_{L}^{\vee}}{\longrightarrow}
DG(\beta)\longrightarrow Coker(\beta_{L}^{\vee})\longrightarrow
0.
\end{equation}
Applying $Hom_\mathbb{Z}(-,\mathbb{C}^\times)$ to (\ref{exact3}) yields
\begin{equation}\Label{exact5}
1\longrightarrow \mu\longrightarrow
G\stackrel{\alpha^{L}}{\longrightarrow}
(\mathbb{C}^{\times})^{2m}\longrightarrow
T_{L}\longrightarrow 1, \
\end{equation}
where
$\mu:=Hom_{\mathbb{Z}}(Coker(\beta_{L}^{\vee}),\mathbb{C}^{\times})$ and $T_{L}$ is the torus 
of dimension $m+d$. 
From \cite{BCS} and Proposition \ref{lawrencestacky}, the toric DM stack $\mathcal{X}(\mathbf{\Sigma_{\theta}})$ is
the quotient stack $[(\mathbb{C}^{2m}\setminus V(\mathcal{I}_{\theta}))/G]$, where $G$ acts 
on $\mathbb{C}^{2m}\setminus V(\mathcal{I}_{\theta})$ through the map $\alpha^{L}$ in (\ref{exact5}). 

\subsubsection{Hypertoric Deligne-Mumford stacks}\label{Section:hypertoricDMstack}
Define an ideal in $\mathbb{C}[z,w]$ by:
\begin{equation}\Label{ideal1}
I_{\beta^{\vee}}:=\left\langle\sum_{i=1}^{m}(\beta^{\vee})^{*}(x)_{i}z_{i}w_{i}|~ x\in DG(\beta)^{*}\right\rangle,
\end{equation}
where $(\beta^{\vee})^{*}$ is the map in (\ref{exact1}) and $(\beta^{\vee})^{*}(x)_{i}$ is the $i$-th component
of the vector $(\beta^{\vee})^{*}(x)$.

According to Section 6 in \cite{HS}, $I_{\beta^{\vee}}$ is a prime ideal. Let $Y$ be the closed  subvariety of
$\mathbb{C}^{2m}\setminus V(\mathcal{I}_{\theta})$ determined by the ideal (\ref{ideal1}).
Since 
$(\mathbb{C}^{\times})^{2m}$ acts on $Y$ naturally and the
group $G$ acts on $Y$ through the map $\alpha^{L}$, we have the quotient stack $[Y/G]$. 
Since $Y\subseteq\mathbb{C}^{2m}\setminus V(\mathcal{I}_{\theta})$ is a closed subvariety, the quotient stack $[Y/G]$ is a closed substack 
of $\mathcal{X}(\mathbf{\Sigma_{\theta}})$, and is Deligne-Mumford.

\begin{defn}[\cite{JT}, Definition 2.7]
The hypertoric Deligne-Mumford stack
$X_\A$ associated to the  stacky
hyperplane arrangement  $\mathcal{A}$ is defined to be the quotient stack
$[Y/G]$.
\end{defn}

\begin{example}
Let $N=\mathbb{Z}$, $\Sigma$ the fan of
projective line $\mathbb{P}^{1}$, and $\beta:
\mathbb{Z}^{2}\to N$  given by $\{b_{1}=(-2),
b_{2}=(1)\}$. Then the Gale dual $\beta^{\vee}:
\mathbb{Z}^{2}\to \mathbb{Z}$ is given by the
matrix $\left[
\begin{array}{ccc}
1,&2
\end{array}
\right]$. Choose a generic element $\theta=(1)$
in $\mathbb{Z}$ which determines the fan $\Sigma$.
The stacky hyperplane arrangement is $\mathcal{A}=(N,\beta,\theta)$, 
$G=(\mathbb{C}^{\times})^{1}$ and $Y$ is the
subvariety of
$\text{Spec}(\mathbb{C}[z_{1},z_{2},w_{1},w_{2}])$ determined
by the ideal $I_{\beta^{\vee}}=(z_{1}w_{1}+z_{2}w_{2})$.  Then the hypertoric Deligne-Mumford stack is the cotangent bundle $T^*_{\pp(1,2)}$. 
\end{example}

Each Deligne-Mumford stack has an underlying coarse moduli space. 
Consider again the map $\beta^{\vee}: \mathbb{Z}^{m}\rightarrow DG(\beta)$ in (\ref{exact2}),
which is given by the vectors $(a_1,\cdots,a_m)$. As in \S \ref{LawrenceDMstack}, let 
$\overline{\theta}$
be the natural image of $\theta$ under the projection $DG(\beta)\rightarrow \overline{DG(\beta)}$. 
Then the map $\overline{\beta}^{\vee}: \mathbb{Z}^{m}\rightarrow \overline{DG(\beta)}$ is given by $\overline{\beta}^{\vee}=(\overline{a}_{1},\cdots,\overline{a}_{m})$. 
From the map $\overline{\beta}^{\vee}$ we get the  simplicial fan $\Sigma_{\theta}$ in 
(\ref{fan}). By \cite{BCS}, the toric variety $X(\Sigma_{\theta})=(\mathbb{C}^{2m}-V(\mathcal{I}_{\theta}))\slash G$, is the coarse moduli space of 
the Lawrence toric Deligne-Mumford stack $\mathcal{X}(\mathbf{\Sigma_{\theta}})$. The toric variety  $X(\Sigma_{\theta})$
is semi-projective, but not projective. 
In \cite{HS},
from $\beta^{\vee}$ and $\theta$, the authors define the hypertoric variety 
$Y(\beta^{\vee},\theta)$ as the complete intersection of the toric variety 
$X(\Sigma_{\theta})$ by the ideal (\ref{ideal1}), which is the geometric quotient 
$Y\slash G$.

\begin{prop}[\cite{JT}, Proposition 2.8]
The coarse moduli space of $X_\A$ is
$Y(\beta^{\vee},\theta)$.
\end{prop}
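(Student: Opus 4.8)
The plan is to deduce the statement from the description $X_\A=[Y/G]$ together with the already-established fact (used to define $\mathcal{X}(\mathbf{\Sigma_\theta})$) that $\mathcal{X}(\mathbf{\Sigma_\theta})=[(\cc^{2m}\setminus V(\mathcal{I}_\theta))/G]$ has coarse moduli space the geometric quotient $X(\Sigma_\theta)=(\cc^{2m}\setminus V(\mathcal{I}_\theta))/G$. First I would record two facts about the $G$-action. It has finite stabilizers on $\cc^{2m}\setminus V(\mathcal{I}_\theta)$ (this is what makes $\mathcal{X}(\mathbf{\Sigma_\theta})$ Deligne-Mumford) and admits a geometric quotient by \cite{BCS}, \cite{HS}. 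Moreover, since $\beta_L^\vee=(\beta^\vee,-\beta^\vee)$, the group $G$ acts on the coordinate $z_i$ through a character $\chi_i$ and on $w_i$ through $\chi_i^{-1}$; hence each product $z_iw_i$, and therefore each generator $\sum_i(\beta^\vee)^*(x)_iz_iw_i$ of $I_{\beta^\vee}$, is $G$-invariant. Consequently $Y=V(I_{\beta^\vee})\cap(\cc^{2m}\setminus V(\mathcal{I}_\theta))$ is a $G$-invariant, hence saturated, closed subvariety.

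Next I would restrict the quotient to $Y$. A geometric quotient restricts to a geometric quotient on any saturated closed subvariety, the image being a closed subvariety of the original quotient; so $Y\to Y/G$ is a geometric quotient onto a closed subvariety of $X(\Sigma_\theta)$, and the $G$-action on $Y$ still has finite stabilizers. Since $X_\A=[Y/G]$ is thus a separated Deligne-Mumford stack (a closed substack of the Deligne-Mumford stack $\mathcal{X}(\mathbf{\Sigma_\theta})$), the Keel-Mori theorem produces a coarse moduli space, and the existence of the geometric quotient $Y\to Y/G$ identifies it with $Y/G$; this is the same reasoning that identifies a toric Deligne-Mumford stack with its GIT quotient as coarse space in \cite{BCS}.

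It remains to match $Y/G$ with $Y(\beta^\vee,\theta)$, which is precisely the description of the latter in \cite{HS}: $Y(\beta^\vee,\theta)$ is defined there as the complete intersection in $X(\Sigma_\theta)$ cut out by the ideal $I_{\beta^\vee}$, equivalently as the image $Y/G$ of $Y$ under the quotient map, and primeness of $I_{\beta^\vee}$ (\cite{HS}, \S 6) ensures it is a reduced irreducible variety. I expect the only point needing genuine care to be this last compatibility: one must know that forming $G$-invariants commutes with the quotient by $I_{\beta^\vee}$, so that the descended ideal really is the defining ideal of $Y(\beta^\vee,\theta)$ in $X(\Sigma_\theta)$. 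Over $\cc$ this is automatic, since the averaging/Reynolds operator makes the functor of invariants exact, and it is in fact how $Y(\beta^\vee,\theta)$ is built in \cite{HS}. One could also bypass this by an \'etale-local argument: cover $X(\Sigma_\theta)$ by charts on which $\mathcal{X}(\mathbf{\Sigma_\theta})=[\spec A/\Gamma]$ with $\Gamma$ finite and coarse space $\spec A^\Gamma$, observe $X_\A=[\spec(A/J)/\Gamma]$ for the $\Gamma$-stable ideal $J$ of local equations of $Y$, use exactness of $\Gamma$-invariants in characteristic zero to get $(A/J)^\Gamma=A^\Gamma/(J\cap A^\Gamma)$, and glue.
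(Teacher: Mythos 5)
Your proposal is correct and takes essentially the same route as the paper intends: the statement is quoted from \cite{JT}, and both there and in the paragraph preceding it in this paper the identification rests on exactly your two points, namely that $Y$ is a $G$-invariant (hence saturated) closed subvariety of $\mathbb{C}^{2m}\setminus V(\mathcal{I}_\theta)$, so the geometric quotient $Y/G$ is the coarse moduli space of $[Y/G]$ just as in \cite{BCS} for the Lawrence toric stack, and $Y/G$ is by construction in \cite{HS} precisely $Y(\beta^{\vee},\theta)$. Your extra care about the descended ideal (exactness of invariants in characteristic zero) is a fine but standard point and does not change the argument.
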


\subsubsection{Cores}\label{core:hypertoricDMstack}

Recall that a hypertoric Deligne-Mumford stack $X_\A\to \overline{X}_0$ is a symplectic resolution, and the core 
$C(X_{\A})$ is the fibre over most singular points of $\overline{X}_0$, which is the deformation retract of $X_\A$. 

The core is a finite union of toric Deligne-Mumford stacks.   Let 
$U\subset [m]$ be a finite subset, and set
$$\mathcal{P}_{U}:=\{v\in M_{\rr}| \langle b_i,v\rangle+r_i\geq 0 ~\text{if}~ i\in U,  ~\text{and}~ \langle b_i,v\rangle+r_i\leq 0 ~\text{if}~ i\notin U\}.$$
Then $\mathcal{P}_{U}$ is the polytope cut out by the cooriented hyperplanes of $\mathcal{H}=\{H_1,\cdots,H_m\}$ after reversing the coorientations of the hyperplanes with indices in $U$.  Assume that $\mathcal{P}_U$ is bounded, we denote by 
$\Sigma_{U}\subset N_{\rr}$ the normal fan of $\mathcal{P}_U$. Then 
$\mathbf{\Sigma}_{U}=(N, \Sigma_{U}, \beta)$ is an extended stacky fan in sense of \cite{Jiang}, and let 
$\chi(\mathbf{\Sigma_{U}})$ be the corresponding toric Deligne-Mumford stack. 

\begin{prop}
The core $C(X_\A)$ of hypertoric Deligne-Mumford stack $X_\A$ is
$$C(X_\A)=\bigcup_{\mathcal{P}_U \text{~bounded~}}\chi(\mathbf{\Sigma_{U}}).$$
\end{prop}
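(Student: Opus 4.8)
The plan is to identify the core $C(X_\A)$ as a specific $\bbT$-fixed locus inside $X_\A$, and then decompose that locus into pieces indexed by bounded chambers of the (flipped) hyperplane arrangements. The key conceptual input is the description of $X_\A = [Y/G]$ with $Y \subset \cc^{2m}$ cut out by the ideal $I_{\beta^\vee}$ in coordinates $z_1,\dots,z_m,w_1,\dots,w_m$, together with the symplectic resolution map $\phi\colon X_\A \to \ol{X}^0$. Recall that $\ol{X}^0$ is the affinization (good moduli space) of the $\theta=0$ hypertoric stack, and that $\phi$ contracts $C(X_\A)$ onto the most singular points. The core is therefore $\phi^{-1}$ of the deepest stratum of $\ol{X}^0$; concretely, the affinization map in these coordinates records the $G$-invariant functions, which are generated by the monomials $z_iw_i$ (subject to the ideal), so the deepest point is where all $z_iw_i = 0$. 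Hence $C(X_\A)$ is the locus in $[Y/G]$ where $z_iw_i = 0$ for all $i$.

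**First I would** make this identification precise: the Hamiltonian $\cc^\times$ (or rather the subtorus of $\bbT$ scaling fibers) has $X_\A$ as a semiprojective stack, and $C(X_\A)$ is exactly the union of those $\bbT$-fixed-point components of $X_\A$ whose images under $\phi$ are the singular points — equivalently, the locus of points whose orbit closures under the scaling action stay in a compact set. Then for each subset $U \subseteq [m]$, consider the locally closed substack $X_U \subset X_\A$ where $z_i = 0$ for $i \in U$ and $w_i = 0$ for $i \notin U$. On $X_U$ the surviving coordinates are $w_i$ ($i\in U$) and $z_i$ ($i \notin U$); I claim the GIT quotient of this coordinate subspace by $G$ (with stability $\theta$) is precisely the toric Deligne-Mumford stack $\chi(\mathbf{\Sigma}_U)$ attached to the flipped polytope $\mathcal{P}_U$. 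This is a bookkeeping matching: flipping the coorientation of $H_i$ for $i \in U$ corresponds in the Gale-dual/quotient picture to swapping the roles of $z_i$ and $w_i$, which is exactly what defines $\mathcal{P}_U$ in the displayed formula for $\mathcal{P}_U$ in \S\ref{core:hypertoricDMstack}, and the resulting GIT data is the extended stacky fan $\mathbf{\Sigma}_U = (N,\Sigma_U,\beta)$. One must check the irrelevant-ideal condition: $X_U$ meets the stable locus $\cc^{2m}\setminus V(\mathcal{I}_\theta)$ in a nonempty set precisely when $\mathcal{P}_U$ is nonempty, and the quotient $[X_U/G]$ is compact (hence lands in the core) precisely when $\mathcal{P}_U$ is bounded — this uses that properness of the quotient toric stack is equivalent to completeness of its fan, i.e. boundedness of $\mathcal{P}_U$.

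**Next,** with these two facts in hand — ($\ast$) $C(X_\A) = \{z_iw_i = 0 \text{ for all } i\}$ and ($\ast\ast$) $X_U \cap \{z_iw_i=0\,\forall i\} \cong \chi(\mathbf{\Sigma}_U)$ when $\mathcal{P}_U$ is bounded — I would conclude by observing that the locus $\{z_iw_i = 0 \text{ for all } i\}$ decomposes as the union over $U \subseteq [m]$ of the pieces $X_U$ (since for each $i$ either $z_i = 0$ or $w_i = 0$, giving a choice of $U$), and only the $U$ with $\mathcal{P}_U$ bounded contribute nonempty compact pieces lying in the core. The pieces with $\mathcal{P}_U$ unbounded either are empty (if $\mathcal{P}_U$ is also empty, i.e. the arrangement has no such chamber) or escape to infinity under the scaling flow and hence do not lie in $C(X_\A)$; this needs the standard fact that a noncompact toric substack here has a fixed point whose $\phi$-image is not the deepest singular point. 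Taking the union gives the displayed formula
\[
C(X_\A) = \bigcup_{\mathcal{P}_U \text{ bounded}} \chi(\mathbf{\Sigma}_U).
\]

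**The main obstacle** I anticipate is step ($\ast\ast$): carefully verifying that the GIT/stacky data one gets by restricting the $G$-action on $\cc^{2m}$ (with the Lawrence-type irrelevant ideal $\mathcal{I}_\theta$ and the defining relations $I_{\beta^\vee}$) to the coordinate subspace $X_U$ really reproduces the extended stacky fan $\mathbf{\Sigma}_U$ of $\mathcal{P}_U$ in the sense of \cite{Jiang}, including matching the extra (non-boundary) rays correctly and checking that the quotient is Deligne-Mumford and reduced. This is essentially a translation between the Gale-dual language used to build $X_\A$ and the normal-fan language used to define $\mathcal{P}_U$ and $\mathbf{\Sigma}_U$; it is routine in spirit but requires attention to the torsion in $N$ and to the fact that $\{b_{n+1},\dots,b_m\}$ are "extra data." A secondary, more minor, point is justifying cleanly that the core is exactly the zero locus of all $z_iw_i$ rather than merely contained in it — i.e. that every bounded chamber's toric stack genuinely lies in the deformation-retract core and not a larger attracting set; this follows from the Bialynicki-Birula / Morse-theoretic description of $X_\A$ under the fiber-scaling $\cc^\times$, which is already implicit in the symplectic-resolution discussion of \S\ref{subsection:symplectic:resolution}.
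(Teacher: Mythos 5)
Your overall skeleton is the standard route (this proposition is the stacky analogue of Hausel--Sturmfels' description of the core of a toric hyperk\"ahler variety, which is presumably why the paper records it without proof): decompose a coordinate-vanishing locus into pieces $X_U$ indexed by subsets $U\subset[m]$, identify each piece with the toric stack of the normal fan of $\mathcal{P}_U$, and keep the bounded chambers. However, your step $(\ast)$ contains a genuine error. The $G$-invariant functions on $\ol{Y}$ are \emph{not} generated by the monomials $z_iw_i$: every monomial $z^aw^b$ with $\beta^{\vee}(a-b)=0$, i.e.\ with $a-b$ in the image of $M=N^*\hookrightarrow\zz^m$, is invariant. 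Consequently the locus $\{z_iw_i=0 \text{ for all } i\}$ is the \emph{extended} core, which strictly contains the core. Concretely, for $X_\A=T^*\pp^1$ (so $G=\cc^\times$ acting with weights $(1,1,-1,-1)$ and $Y\subset\{z_1w_1+z_2w_2=0\}$), the point $z=(1,0)$, $w=(0,1)$ has $z_1w_1=z_2w_2=0$ but $z_1w_2\neq0$; it is a nonzero covector over a torus-fixed point of $\pp^1$, so it lies in your locus but not in $C(X_\A)=\phi^{-1}(o)=\pp^1$. Since a hyperplane arrangement always has unbounded chambers, $(\ast)$ fails in every example, and your later step in fact contradicts it: you first assert the core equals the full vanishing locus, then discard the unbounded pieces of that same locus because they ``escape to infinity under the scaling flow.''

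The needed repair is to tie the selection of bounded chambers to the paper's definition of the core as the fibre of $\phi\colon X_\A\to\ol{X}^0$ over the most singular point, rather than to a scaling heuristic. Two ways to do this: (i) characterize $\phi^{-1}(o)$ as the common zero locus of \emph{all} nonconstant invariant monomials $z^aw^b$ with $a-b\in M$ (not just the $z_iw_i$), and check that on each piece $X_U\cong\chi(\mathbf{\Sigma_U})$ these extra invariants cut out exactly the union of the compact torus-invariant substacks, i.e.\ the toric stacks of the bounded chambers; or (ii) use properness: each $X_U$ is closed in $X_\A$, $\phi$ is projective and $\ol{X}^0$ is affine, so $X_U\subset\phi^{-1}(o)$ if and only if $X_U$ is proper, which via your identification happens if and only if $\Sigma_U$ is complete, i.e.\ $\mathcal{P}_U$ is bounded; one still must argue the reverse containment, that every point of $\phi^{-1}(o)$ lies in a bounded-chamber piece, which again comes down to the invariant-theoretic computation in (i). Once that is in place, your step $(\ast\ast)$ --- matching the Lawrence GIT data restricted to the coordinate subspace $X_U$ with the extended stacky fan $(N,\Sigma_U,\beta)$, with due care for coorientation conventions and the extra rays --- is indeed routine bookkeeping, as you anticipate.
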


\begin{example}\label{example1:core}
Let $(N, \beta,\theta)$ be a stacky hyperplane arrangementgiven by:
$$\mathbb{Z}^4\stackrel{\beta}{\longrightarrow}\zz^2,$$
and $\beta$ is given by 
$$
\begin{cases}
b_1=(1,0);\\
b_2=(0,-1);\\
b_3=(0,1);\\
b_4=(-1,-2).
\end{cases}
$$
The generic element $\theta=(1,1)\in DG(\beta)=\zz^2$.  
The normal fan of the bounded polytope $\Gamma$ is the toric fan of a Hirzebruch surface. 
The core of the hypertoric Deligne-Mumford stack is a union of Hirzebruch surface and a weighted projective plane $\pp(1,1,2)$.
\end{example}

\subsubsection{Closed substacks of hypertoric Deligne-Mumford stacks}\label{substack}

Let $\mathcal{A}=(N,\beta,\theta)$ be a 
stacky hyperplane arrangement.
Consider the map $\beta: \mathbb{Z}^{m}\rightarrow N$ given by $\{b_{1},\cdots,b_{m}\}$.
Let $\mbox{Cone}(\beta)$ be a partially
ordered finite set of cones generated by
$\overline{b}_{1},\cdots,\overline{b}_{m}$. The partial ordering is
defined by requiring that $\sigma\prec\tau$ if $\sigma$ is a face of $\tau$. We have the minimum element $\hat{0}$ which is the cone
consisting of the origin. Let $\mbox{Cone}(\overline{N})$ be the set of
all convex polyhedral cones in the lattice $\overline{N}$. Then we
have a map
$$C: \mbox{Cone}(\beta)\longrightarrow \mbox{Cone}(\overline{N}),$$ such
that for any $\sigma\in \mbox{Cone}(\beta)$, $C(\sigma)$ is the
cone in $\overline{N}$. Then $\Delta_{\mathbf{\beta}}:=(C,\mbox{Cone}(\beta))$ is a simplicial multi-fan in the sense of \cite{HM}. 

Let $\sigma\in\Delta_{\beta}$ be a cone.  According to \cite[Section 4]{JT}, the stacky hyperplane arrangement $\A=(N,\beta,\theta)$ induces a quotient stacky hyperplane arrangement $\A/\sigma=(N(\sigma), \beta(\sigma), \theta(\sigma))$, whose corresponding hypertoric Deligne-Mumford stack 
$X_{\A/\sigma}$ is a closed substack of $X_\A$.  More details can be found in \cite[Section 4]{JT}. 

\begin{example}\label{inertia:stacks}[Inertia stacks]
Let $N_{\sigma}$ be the sublattice generated by $\sigma$, and $N(\sigma):=N/N_{\sigma}$. Note that when $\sigma$ is a  top
dimensional cone, $N(\sigma)$ is the local orbifold group in the
local chart of the coarse moduli space of the hypertoric toric Deligne-Mumford
stack. Given the multi-fan $\Delta_{\mathbf{\beta}}$, we consider the pairs
$(v,\sigma)$, where $\sigma$ is a cone in $\Delta_{\mathbf{\beta}}$, 
$v\in N$ such that $\overline{v}=\sum_{\rho_{i}\subseteq \sigma}\alpha_{i}b_{i}$ for 
$0<\alpha_{i}<1$. Note that $\sigma$ is the minimal cone in $\Delta_{\mathbf{\beta}}$
satisfying the above condition. Let $\mbox{Box}:=\mbox{Box}(\Delta_{\mathbf{\beta}})$ be the set of all such pairs  
$(v,\sigma)$.

\cite[Proposition 4.7]{JT} determines the 
 inertia stack of $X_\A$, which  is given by
\begin{equation}\label{inertia}
IX_\A=\coprod_{(v,\sigma)\in
\bbox(\Delta_{\mathbf{\beta}})}X_{\A/\sigma}.
\end{equation}
\end{example}

\subsection{Equivariant Chen-Ruan cohomology}\label{Equivariant:Chen-Ruan:cohomology}

Let $X_\A$ be the hypertoric Deligne-Mumford stack associated to a stacky hyperplane arrangement $\A$.  From the construction in \S \ref{Section:hypertoricDMstack}, there is a torus $T:=(\cc^\times)^m$ action on $X_\A$.  From the exact sequence:
$$1\rightarrow \mu\longrightarrow G\longrightarrow (\cc^\times)^m\longrightarrow (\cc^\times)^d\rightarrow 1$$
the $T$-action on $X_\A$ induces a $(\cc^\times)^d$-action on $X_\A$. 

There is another $\cc^\times$-action on the fibre of 
$$T^*\cc^m=\cc^m\times (\cc^m)^*$$
by scaling.  We consider $\bbT:=T\times \cc^\times$-equivariant Chen-Ruan cohomology of $X_\A$. 

\subsubsection{$\bbT$-equivariant cohomology of $X_\A$.}
The torus $T$-equivariant cohomology of $X_\A$ is described by the multi-fan $\Delta_\beta$, i.e. the Matroid $M_\beta$. 
Each $b_i\in \A$ defines a line bundle 
$$L_i=[Y\times\cc/G]$$
where $G$ acts on the fibre by the $i$-th component 
$\alpha: G\to (\cc^\times)^m$. 
Let $\lambda_i$ be the parameters of the $T$-action on $X_\A$. 
Let $\{u_i | 1\leq i\leq m\}$ be the $T$-equivariant Chern class of $L_i$ over $X_\A$. 
If $D_i$ is the divisor corresponding to the line bundle $L_i$. Then 
$$u_i=D_i-\lambda_i.$$
The following result is due to 
Hausel and Sturmfel \cite{HS}:
\begin{prop}
Let $\A=(N,\beta,\theta)$ be a stacky hyperplane arrangement and $X_\A$ the corresponding hypertoric Deligne-Mumford stack. Then
$$H^*_{T}(X_\A)=R_\bbT [u_1,\cdots,u_m]/I_{M_\beta},$$
where 
$$I_{M_\beta}=\{u_{i_1}\cdots u_{i_k}| \overline{b}_{i_1}, \cdots, \overline{b}_{i_k}~\mbox{linearly dependent in~} \overline{N}\}.$$
\end{prop}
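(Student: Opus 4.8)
The plan is to use the quotient presentation $X_\A=[Y/G]$ --- with $Y\subset U=\cc^{2m}\setminus V(\mathcal{I}_\theta)$ cut out by $I_{\beta^\vee}$, i.e. $X_\A$ realized as the algebraic symplectic reduction of $T^*\cc^m$ by $G$ at the stability $\theta$ --- together with the combinatorics of the ambient Lawrence toric Deligne--Mumford stack $\chi(\mathbf{\Sigma_\theta})=[U/G]$. There are two things to prove: (i) the classes $u_1,\dots,u_m$ generate $H^*_\bbT(X_\A)$ over $R_\bbT$, so that there is a graded surjection $R_\bbT[u_1,\dots,u_m]\twoheadrightarrow H^*_\bbT(X_\A)$; and (ii) the kernel of that surjection is exactly $I_{M_\beta}$.

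For (i) I would invoke Kirwan surjectivity. Since $Y$ is the $\theta$-semistable locus of the zero fibre of the complex moment map for $G$ on $T^*\cc^m$, the equivariant Kirwan map
$H^*_{G\times\bbT}(T^*\cc^m)=\Sym\!\big(DG(\beta)_\qq\big)\otimes_\qq R_\bbT\longrightarrow H^*_\bbT(X_\A)$
is surjective; in the hypertoric situation this is the content of \cite{HS}. Because the vectors $a_i=\beta^\vee(e_i)$ span $DG(\beta)_\qq$ (the cokernel of $\beta^\vee$ in (\ref{exact2}) has rank $0$), the images $u_i=c_1^\bbT(L_i)$ generate $H^*_\bbT(X_\A)$ as an $R_\bbT$-algebra. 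One also records here that $u_i+u_i'=\hbar$ on $X_\A$, where $u_i'=c_1^\bbT(L_i')$ is the class of the $w_i$-divisor: the line bundle $L_i\otimes L_i'=[Y\times\cc/G]$ has trivial $G$-action (its $G$-weight is $a_i+(-a_i)$) and $\bbT$-weight $\hbar$, since the residual $\cc^\times\subset\bbT$ scales $w_i$; this explains why only the $u_i$ appear in the statement.

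For (ii), first $I_{M_\beta}\subseteq\ker$. Using the residual $\cc^\times$-action, $X_\A$ carries a Bia{\l}ynicki--Birula decomposition into affine cells, hence $H^{\mathrm{odd}}(X_\A)=0$; so $X_\A$ is $\bbT$-equivariantly formal, $H^*_\bbT(X_\A)$ is a free $R_\bbT$-module, and restriction to the (isolated) $\bbT$-fixed points is injective. Every $\bbT$-fixed point lies on a toric substack of the form $\chi(\mathbf{\Sigma_U})$ as in \S\ref{core:hypertoricDMstack}, and on such a toric stack $\prod_{i\in S}u_i$ restricts to the corresponding monomial in the Stanley--Reisner ring of the simplicial fan $\Sigma_U\subset N_\rr$; when $\{\overline{b}_i:i\in S\}$ is linearly dependent the set $S$ indexes no cone of $\Sigma_U$, so that monomial --- and hence its further restriction to any fixed point --- vanishes. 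Thus $\prod_{i\in S}u_i=0$ in $H^*_\bbT(X_\A)$, giving a graded surjection $R_\bbT[u_1,\dots,u_m]/I_{M_\beta}\twoheadrightarrow H^*_\bbT(X_\A)$. To upgrade this to an isomorphism, note that $R_\bbT[u_1,\dots,u_m]/I_{M_\beta}=R_\bbT\otimes_\qq\big(\qq[u_1,\dots,u_m]/I_{M_\beta}\big)$ is a flat base change of the Stanley--Reisner ring of the independence complex of the matroid $M_\beta$, which is Cohen--Macaulay; both sides are therefore graded free $R_\bbT$-modules, and a graded surjection of free $R_\bbT$-modules of equal rank is an isomorphism. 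So it suffices to check that the ranks agree, i.e. that the Poincar\'e polynomial of $X_\A$ equals the $h$-polynomial count of the independence complex of $M_\beta$ (a Tutte-polynomial evaluation). This is the combinatorial heart of \cite{HS}; its refinement to the stacky setting (torsion in $N$, $\Delta_\beta$ a multi-fan, and the correct presentation of $H^*_\bbT(\chi(\mathbf{\Sigma_U}))$) proceeds as in \cite{JT}, and alternatively the whole presentation is the untwisted summand of the description of $H^*_{\CR,\bbT}(X_\A)$ recalled before the statement.

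The step I expect to be the real obstacle is the rank comparison in (ii) --- establishing that $R_\bbT[u_1,\dots,u_m]/I_{M_\beta}\to H^*_\bbT(X_\A)$ is injective, not merely surjective. This is exactly where the genericity of $\theta$ and the Gale-duality combinatorics of the arrangement enter in an essential way: they determine which subsets of $\{b_{L,i}\}\cup\{b'_{L,i}\}$ span cones of $\Sigma_\theta$, hence which monomials in the $u_i$ survive after substituting $u_i'=\hbar-u_i$, and it is the point at which the stacky generalization genuinely relies on \cite{JT}. Kirwan surjectivity in (i) is a second, more standard, input; everything else is formal once these two are in hand.
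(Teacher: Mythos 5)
The paper offers no proof of this proposition at all: it is stated as a quotation of Hausel--Sturmfels \cite{HS} (with the stacky upgrade implicitly as in \cite{JT}, \cite{JT2}), so there is nothing of the paper's to compare your argument with step by step. Judged on its own terms, your outline follows the expected Konno/Hausel--Sturmfels strategy (Kirwan surjectivity, equivariant formality from a Bia{\l}ynicki--Birula decomposition, matroid combinatorics), but two steps fail as written. First, you carry out the whole argument $\bbT$-equivariantly, i.e.\ with $\hbar$ present, and there the monomial relations you want are in general false: the geometric source of the vanishing is the empty intersection $\bigcap_{i\in S^+}\{z_i=0\}\cap\bigcap_{j\in S^-}\{w_j=0\}=\emptyset$, which gives $\prod_{i\in S^+}u_i\prod_{j\in S^-}(\hbar-u_j)=0$ (exactly the relation in the $\bbT$-equivariant theorem that follows), and hence for a circuit with $S^-\neq\emptyset$ one gets $\prod_{i\in S}u_i=\pm\hbar\cdot(\cdots)\neq 0$. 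Your claim that $\prod_{i\in S}u_i$ restricts on a core component $\chi(\mathbf{\Sigma_{U}})$ to a Stanley--Reisner monomial hides precisely this: for $i\in U$ the coorientation is flipped, and $u_i$ restricts to the class of the $w_i$-divisor up to the equivariant shift ($\hbar-u_i$, not $u_i$). The purely monomial presentation is a statement about $H^*_T$ (no $\hbar$), where $[z_i=0]+[w_i=0]=0$, and even there it requires the correct normalization of the classes $u_i$ as the equivariant divisor classes.

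Second, and more seriously, the concluding step ``graded surjection of free $R_\bbT$-modules of equal rank'' cannot work. $R_\bbT[u_1,\dots,u_m]/I_{M_\beta}=R_\bbT\otimes_\qq\bigl(\qq[u_1,\dots,u_m]/I_{M_\beta}\bigr)$ is free over $R_\bbT$ of \emph{infinite} rank (the Stanley--Reisner ring of a rank-$d$ matroid is infinite dimensional over $\qq$), while $H^*_\bbT(X_\A)$ is free of finite rank equal to the total Betti number of the core; so the ranks never agree, and in fact the surjection is genuinely not injective: its kernel also contains \emph{linear} relations identifying $\sum_i\langle x,\overline{b}_i\rangle u_i$ with characters of $T$ (for $X_\A=T^*\pp^1$ one finds $u_1-u_2=\lambda_1-\lambda_2$ for the honest divisor classes), none of which lie in $I_{M_\beta}$. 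The correct statement --- which is what \cite{HS} (and \cite{Konno}) prove and what the paper intends by its citation --- is that $H^*_T(X_\A)$ is isomorphic to $\qq[u_1,\dots,u_m]/I_{M_\beta}$ with the equivariant parameters acting \emph{through} these linear forms, not adjoined freely; with that reading your Hilbert-series comparison with the $h$-vector of the matroid is exactly the content of \cite{HS}, and the stacky generalization follows from the identification of the (equivariant) cohomology of $X_\A$ with that of the Lawrence toric stack as in \cite{JT2}, which is also how the paper argues for the $\bbT$-equivariant theorem immediately afterwards. So either reformulate the presentation in the Hausel--Sturmfels form (or add the linear relations explicitly) and then run your rank count modulo the linear system of parameters, or simply cite \cite{HS} together with \cite{JT2} as the paper does.
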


We consider the extra factor $\cc^\times$ action in the $\bbT$-equivariant cohoology of $X_\A$. The extra $\cc^\times$-action on 
$T^*\cc^m$ descends to a $\bbT$-equivariant line bundle over $X_\A$ with the first Chern class $\hbar$. 
Recall the line bundle $L_i$ over $X_\A$ just constructed before.   The line bundle $L_i$ can be thought as a divisor
$$\{(z_i,w_i)\in Y| z_i=0\}.$$
Now if we work $\bbT$-equivariantly, we will have the following $L_i^{-1}$ over $X_\A$ as:
$$\{(z_i,w_i)\in Y| w_i=0\}$$
and we have a $\cc^\times$-action on $L_i^{-1}$, so
$$c_1(L_i^{-1})=\hbar-u_i.$$


As in \cite{GH}, \cite{HP}, define
$$G_i:=\{v\in M_{\mathbb{R}}|<b_{i},v>+r_{i}\leq 0\}.$$
\begin{defn}
A circuit $S\subset \A$ is a minimal subset of hyperplanes satisfying $\cap_{i\in S}H_i=\emptyset$, and let $S:=S^+\sqcup S^-$ be the unique splitting such that 
$$(\cap_{i\in S^+}G_i)\cap(\cap_{j\in S^-}F_j)=\emptyset.$$
\end{defn}
Let $S=S^+\sqcup S^-$ be a minimal circuit, so that $\{b_{i_k}| i_k\in S\}$ linearly dependent in $\overline{N}$. 
Then there exists positive integers $w_i\in \zz_{>0}$ for $i\in S$ such that 
$$\sum_{i\in S^+}w_i\overline{b}_i-\sum_{j\in S^-}w_j\overline{b}_j=0.$$
Let 
$$\beta_S=\sum_{i\in S^+}w_ie_i-\sum_{j\in S^-}w_je_j,$$
then by exact sequence (\ref{exact1}) $\beta_S$ determines an element in $DG(\beta)^*=H_2(X_\A)$. 
\begin{example}
Let the stacky hyperplane arrangement $(N, \beta,\theta)$ be given by:
$$\mathbb{Z}^4\stackrel{\beta}{\longrightarrow}\zz^2,$$
and $\beta$ is by 
$$
\begin{cases}
b_1=(1,0);\\
b_2=(0,-1);\\
b_3=(0,1);\\
b_4=(-1,-1).
\end{cases}
$$
The generic element $\theta=(1,1)\in DG(\beta)=\zz^2$.  
The normal fan of the bounded polytope $\Gamma$ is the toric fan of a Hirzebruch surface. 
The core of the hypertoric Deligne-Mumford stack is a union of Hirzebruch surface and a projective plane. 
We have three circuits in this case:
$$S=(1,2,4);\quad S=(1,3,4);\quad S=(2,3).$$
\begin{enumerate}
\item $S=S^+\sqcup S^-=(1,4)\sqcup (2)$.  Then $\beta_S=e_1+e_4-e_2=(1,-1,0,1)$.  Then it determines an element 
$(0,1)\in \zz^2$;
\item $S=S^+=(1,3,4)$.  Then $\beta_S=e_1+e_3+e_4=(1,0,1,1)$.  Then it determines an element 
$(1,1)\in \zz^2$;
\item $S=S^+=(2,3)$.  Then $\beta_S=e_2+e_3=(0,1,1,0)$.  Then it determines an element 
$(1,0)\in \zz^2$.
\end{enumerate}
\end{example}

\begin{example}
Look at Example \ref{example1:core} again. 
We have three circuits in this case:
$$S=(1,2,4);\quad S=(1,3,4);\quad S=(2,3).$$
\begin{enumerate}
\item $S=S^+\sqcup S^-=(1,4)\sqcup (2)$.  Then $b_1+b_4-2b_2=0$ and $\beta_S=e_1+e_4-2e_2=(1,-2,0,1)$.  Then it determines an element 
$(0,1)\in \zz^2$;
\item $S=S^+=(1,3,4)$.  Then $b_1+2b_3+b_4=0$ and  $\beta_S=e_1+2e_3+e_4=(1,0,1,1)$.  Then it determines an element 
$(2,1)\in \zz^2$;
\item $S=S^+=(2,3)$.  Then $b_2+b_3=0$ and $\beta_S=e_2+e_3=(0,1,1,0)$.  Then it determines an element 
$(1,0)\in \zz^2$.
\end{enumerate}
\end{example}

\begin{thm}
The $\bbT$-equivariant cohomology ring of $X_\A$ is given by:
$$H^*_{\bbT}(X_\A)\cong \frac{R_\bbT [u_1, \cdots, u_m, \hbar]}{\{\langle \prod_{i\in S^+}u_i\cdot \prod_{j\in S^-}(\hbar-u_j)\rangle | S ~\mbox{a circuit}\}}.$$
\end{thm}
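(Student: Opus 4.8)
The plan is to promote the Hausel--Sturmfels presentation $H^*_T(X_\A)\cong R_T[u_1,\dots,u_m]/I_{M_\beta}$ to the full torus $\bbT=T\times\cc^\times$ by deforming the Stanley--Reisner relations of the matroid $M_\beta$ into their $\hbar$-twisted forms, and then showing that no extra relations appear. Write
$$B:=R_\bbT[u_1,\dots,u_m]\big/J,\qquad J:=\Big\langle \prod_{i\in S^+}u_i\cdot\prod_{j\in S^-}(\hbar-u_j)\ :\ S\subset\A\ \text{a circuit}\Big\rangle$$
(recall $\hbar\in R_\bbT$). I want to build an $R_\bbT$-algebra map $\Phi\colon B\to H^*_\bbT(X_\A)$ with $\Phi(u_i)=c_1^\bbT(L_i)$ and prove it is an isomorphism.

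First I would verify the proposed relations hold in $H^*_\bbT(X_\A)$, which makes $\Phi$ well defined. For a circuit $S=S^+\sqcup S^-$, the class $\prod_{i\in S^+}u_i\cdot\prod_{j\in S^-}(\hbar-u_j)$ is the $\bbT$-equivariant Euler class of the bundle $\bigl(\bigoplus_{i\in S^+}L_i\bigr)\oplus\bigl(\bigoplus_{j\in S^-}L_j^{-1}\bigr)$ over $X_\A$, using $c_1^\bbT(L_i)=u_i$ and $c_1^\bbT(L_j^{-1})=\hbar-u_j$ coming from the fibre-scaling $\cc^\times$. This bundle carries the tautological $\bbT$-equivariant section $(z_i)_{i\in S^+}\oplus(w_j)_{j\in S^-}$, whose zero locus is the closed substack $\{\,z_i=0\ (i\in S^+),\ w_j=0\ (j\in S^-)\,\}\cap X_\A$. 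The defining property of a circuit, $(\bigcap_{i\in S^+}G_i)\cap(\bigcap_{j\in S^-}F_j)=\emptyset$, read off from the ideals $\mathcal I_\theta$ and $I_{\beta^\vee}$ that cut out $Y\subset\cc^{2m}$, says exactly that this locus is empty; hence the section is nowhere zero and the Euler class vanishes.

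To prove $\Phi$ is an isomorphism I would reduce modulo $\hbar$. Since $X_\A$ deformation-retracts onto its core $C(X_\A)$, a union of compact toric Deligne--Mumford stacks whose cohomology is concentrated in even degrees (affine pavings and Mayer--Vietoris), $H^*(X_\A)$ is even, so $X_\A$ is equivariantly formal; thus $H^*_\bbT(X_\A)$ is a finitely generated free $R_\bbT$-module, in particular $\hbar$-torsion free, and $H^*_\bbT(X_\A)/\hbar\cong H^*_T(X_\A)$. On the other side, setting $\hbar=0$ turns the generator $\prod_{i\in S^+}u_i\prod_{j\in S^-}(\hbar-u_j)$ of $J$ into $\pm\prod_{i\in S}u_i$; by genericity of $\theta$ (simplicity of $\mathcal H$) the circuits of the arrangement are precisely the circuits of the matroid $M_\beta$, so these products generate $I_{M_\beta}$, whence $B/\hbar B\cong R_T[u_1,\dots,u_m]/I_{M_\beta}\cong H^*_T(X_\A)$, and the induced $\Phi\bmod\hbar$ is exactly this isomorphism. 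Surjectivity of $\Phi$ then follows from graded Nakayama; for injectivity, a nonzero homogeneous $b\in\ker\Phi$ of minimal degree would satisfy $b\bmod\hbar\in\ker(\Phi\bmod\hbar)=0$, hence $b=\hbar b'$ with $\hbar\,\Phi(b')=0$, so $\Phi(b')=0$ by torsion-freeness with $\deg b'<\deg b$, a contradiction.

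The step I expect to be the main obstacle is the geometric one in the second paragraph: matching $\prod_{i\in S^+}u_i\prod_{j\in S^-}(\hbar-u_j)$ with the Euler class of the correct bundle and, above all, translating ``$S$ is a circuit'' into the emptiness of the substack $\{z_i=0\,(i\in S^+),\,w_j=0\,(j\in S^-)\}\cap X_\A$. This needs careful bookkeeping of which coordinate subspaces of $\cc^{2m}$ meet $Y=(\cc^{2m}\setminus V(\mathcal I_\theta))\cap V(I_{\beta^\vee})$ and of how the divisors $\{z_i=0\}$ and $\{w_j=0\}$ on $X_\A$ correspond to the cooriented half-spaces $G_i$ and $F_j$; once this and equivariant formality are in place, the rest is formal and rides on the $T$-equivariant theorem of Hausel--Sturmfels.
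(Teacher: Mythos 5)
Your argument is correct, but it takes a genuinely different route from the paper. The paper deduces the presentation from Lawrence toric geometry: by \cite{JT2} the restriction $H^*_{\bbT}(\mathcal{X}(\mathbf{\Sigma_{\theta}}))\to H^*_{\bbT}(X_\A)$ is an isomorphism, the $\bbT$-equivariant cohomology of the Lawrence toric Deligne--Mumford stack has its Stanley--Reisner-type presentation in the $2m$ variables $u_i,v_i$ with monomial relations indexed by the circuits (via the irrelevant ideal $I_\theta$, as in \cite{HS}), and the identification $v_i=\hbar-u_i$ then yields the stated ring. You instead work intrinsically on $X_\A$: you verify the circuit relations as $\bbT$-equivariant Euler classes of $\bigoplus_{i\in S^+}L_i\oplus\bigoplus_{j\in S^-}L_j^{-1}$, killed by the tautological section $(z_i,w_j)$ whose zero locus is empty, and you rule out further relations by reducing mod $\hbar$: evenness of $H^*(X_\A)$ gives equivariant formality, hence freeness over $R_\bbT$ and $H^*_{\bbT}(X_\A)/\hbar\cong H^*_{T}(X_\A)$; the circuit monomials generate $I_{M_\beta}$ because every dependent set contains a circuit; and graded Nakayama plus $\hbar$-torsion-freeness give surjectivity and injectivity. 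Both proofs lean on imported inputs --- the paper on the restriction isomorphism of \cite{JT2} and the toric computation, you on the $T$-equivariant Hausel--Sturmfels presentation (quoted in the paper just before this theorem) together with formality --- but your route avoids the Lawrence stack entirely and exhibits the geometric origin of the $\hbar$-deformed relations, at the cost of the Nakayama bookkeeping. The one step you leave as a sketch, emptiness of $\{z_i=0\ (i\in S^+),\ w_j=0\ (j\in S^-)\}\cap X_\A$, is exactly where the splitting $S=S^+\sqcup S^-$ enters and is standard hypertoric geometry: the residual real moment map sends $\{z_i=0\}$ into the half-space $G_i$ and $\{w_j=0\}$ into $F_j$ (as in \cite{Konno}, \cite{HP}), so a point of that locus would have moment image in $(\cap_{i\in S^+}G_i)\cap(\cap_{j\in S^-}F_j)=\emptyset$; with that filled in, your proof is complete.
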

\begin{proof}
Recall that $X_\A$ is a closed substack of the Lawrence toric Deligne-Mumford stack 
$\mathcal{X}(\mathbf{\Sigma_{\theta}})$. From the main result in \cite{JT2}, the cohomology of $X_\A$ is isomorphic to the cohomology ring of  $\mathcal{X}(\mathbf{\Sigma_{\theta}})$.   There is a torus $\bbT$ action on  $\mathcal{X}(\mathbf{\Sigma_{\theta}})$ and the $\bbT$-equivariant cohomology ring of $X_\A$ is isomorphic to the $\bbT$-equivariant cohomology ring of  $\mathcal{X}(\mathbf{\Sigma_{\theta}})$. 
We have:
$$H_{\bbT}^*(\mathcal{X}(\mathbf{\Sigma_{\theta}}))\cong \frac{R_\bbT [u_1,\cdots,u_m,v_1,\cdots,v_m]}{I_\theta},$$
where 
$$I_\theta=\bigcap_{C}\langle \sigma(C,\theta)\rangle\subset \qq[u,v]$$
and
$C=\{a_{i_1}, \cdots, a_{i_d}\}$, $\lambda_1a_{i_1}+\cdots+\lambda_d a_{i_d}=\theta$ so that 
$$\sigma(C,\theta)=\{u_{i_j}: \lambda_j>0\}\cup\{u_{i_j}: \lambda_j<0\}.$$
Now we put into the extra $\cc^\times$-action on the fibre of $T^*\cc^m$.  The line bundle $L_i^{-1}$ over $\mathcal{X}(\mathbf{\Sigma_{\theta}})$
will admit such a $\cc^\times$ action. Look at the ideal 
$$I_\theta=\bigcap\langle C, \theta\rangle,$$
as in \cite[Section 4]{HS}, $I_\theta$ corresponds to all 
$$\{S=(i_1,\cdots,i_k)| \overline{b}_{i_1}, \cdots, \overline{b}_{i_k} ~\text{linearly dependent in}~ \overline{N}\}.$$
Now let $S=S^+\sqcup S^-$ be the decomposition such that 
$i_j\in S^+$ if $\lambda_j>0$ and $i_j\in S^-$ if $\lambda_j<0$.  Note that in equivariant setting we have 
$v_i=\hbar-u_i$. So 
the formula in the theorem just follows. 
\end{proof}

\subsubsection{$\bbT$-equivariant Chen-Ruan cohomology.}

The Chen-Ruan cohomology of $X_\A$ was calculated algebraically in \cite{JT} and symplectically in \cite{GH}. Here we calculate the {\em $\bbT$-equivariant} Chen-Ruan cohomology of $X_\A$.

Recall that the twisted sectors (components of the inertia stack $IX_\A$) of the hypertoric Deligne-Mumford stack $X_\A$ are given by 
the box elements $\mbox{Box}(\Delta_\beta)$ of the multi-fan $\Delta_\beta$. 
The set $\mbox{Box}(\Delta_{\mathbf{\beta}})$ consists of all 
pairs $(v,\sigma)$, where $\sigma$ is a cone in the multi-fan
$\Delta_{\mathbf{\beta}}$,  $v\in N$ such that 
$\overline{v}=\sum_{\rho_{i}\subset\sigma}\alpha_{i}\overline{b}_{i}$ for 
$0<\alpha_{i}<1$.  For $(v,\sigma)\in
\mbox{Box}(\Delta_{\mathbf{\beta}})$ we consider a closed substack of
$X_\A$ given by the quotient stacky hyperplane arrangement
$\mathcal{A}(\sigma)$. The inertia stack of  $X_\A$ is
the disjoint union of all such closed substacks, see Example \ref{inertia:stacks} or \cite[Section 4]{JT}.

We introduce a variable $\mathds{1}_{(v,\sigma)}$ for each box element $(v,\sigma)$.  We make a convention that 
$\mathds{1}_{(v,\sigma)}=1$ if $(v,\sigma)=(0,0)$ is the trivial box element. 
Let $(v,\sigma)\in \mbox{Box}(\Delta_{\mathbf{\beta}})$, say $v\in N(\tau)$ for
some top dimensional cone $\tau$. Let $(\check{v},\sigma)\in
\mbox{Box}(\Delta_{\mathbf{\beta}})$ be the inverse of $v$ as an element
in the group $N(\tau)$. Equivalently, if
$\overline{v}=\sum_{\rho_{i}\subseteq \sigma}\alpha_{i}\overline{b}_{i}$
for $0<\alpha_{i}<1$, then $\check{\overline{v}}=\sum_{\rho_{i}\subseteq
\sigma}(1-\alpha_{i})\overline{b}_{i}$. 

For any two box elements $(v_1, \tau_1), (v_2,\tau_2)\in \mbox{Box}(\Delta_\beta)$, 
if $\tau_1\cup\tau_2$ is a cone in $\Delta_\beta$, then there is a $(v_{3},\sigma_{3})$ which is  unique  in $\mbox{Box}(\Delta_{\mathbf{\beta}})$  such that $v_{1}+v_{2}+v_{3}\equiv 0$  in the local group given by $\sigma_{1}\cup\sigma_{2}$. 
Let $\overline{v}_{i}=\sum_{\rho_{j}\subseteq \sigma_i} \alpha^{i}_j \overline{b}_j$, with $0<\alpha^{i}_j<1$ and
$i=1,2,3$. The existence of  $\alpha^{1}_j,\alpha^{2}_j,\alpha^{3}_j$  means that $\rho_j$ belongs to $\sigma_1, \sigma_2, \sigma_3$.
Let $\sigma_{123}$ be the cone in $\Delta_{\mathbf{\beta}}$ such that 
$\overline{v}_{1}+\overline{v}_{2}+\overline{v}_{3}=\sum_{\rho_i\subseteq\sigma_{123}}a_{i}\overline{b}_i$, with $a_i=1$ or $2$. Let $I$ be the set of 
$i$ such that $a_i=1$ and  $\alpha^{1}_j, \alpha^{2}_j, \alpha^{3}_j$ exist, $J$ the set of $j$ such that $\rho_j$ belongs to $\sigma_{123}$ but not to $\sigma_3$.

\begin{thm}
 The $\bbT$-equivariant Chen-Ruan cohomology $H^*_{\CR,\bbT}(X_\A)$ is given by
$$H^*_{\CR,\bbT}(X_\A)=\frac{R_\bbT [u_1,\cdots,u_m, \hbar, \{\mathds{1}_{(v,\sigma)}\}_{(v,\sigma)\in \bbox(\Delta_\beta)}]}{I+J},$$
where
\begin{enumerate}
\item $I$ is the ideal of all products 
$\langle \prod_{i\in S^+}u_i\cdot\prod_{j\in S^-}(\hbar-u_j)\rangle$ for all circuits $S\subset \A$;
\item  $J$ is the ideal generated by the relations:
\begin{equation}\label{Chen-Ruan:relations}
\begin{cases}
\mathds{1}_{(v,\tau)}\cdot u_i=0,& \tau\cup\rho_i ~\text{is not a cone in}~ \Delta_\beta;\\
\mathds{1}_{(v_1,\tau_1)}\cdot \mathds{1}_{(v_2,\tau_2)}=(-1)^{|I|+|J|}\mathds{1}_{(v_3,\tau_3)}\cdot\prod_{i\in I}u_i\cdot\prod_{i\in J}u_i^2, & \tau\cup\rho_i ~\text{is a cone in}~ \Delta_\beta ~\text{and}~ \overline{v}_1\neq\check{\overline{v}}_2;\\
\mathds{1}_{(v_1,\tau_1)}\cdot \mathds{1}_{(v_2,\tau_2)}=(-1)^{|J|}\cdot\prod_{i\in J}u_i^2, & \tau\cup\rho_i ~\text{is a cone in}~ \Delta_\beta ~\text{and}~ \overline{v}_1=\check{\overline{v}}_2;\\
\mathds{1}_{(v_1,\tau_1)}\cdot \mathds{1}_{(v_2,\tau_2)}=0, & \tau\cup\rho_i ~\text{is not a cone in}~ \Delta_\beta.
\end{cases}
\end{equation}
\end{enumerate}
\end{thm}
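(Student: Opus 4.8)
The plan is to build the isomorphism sector by sector: combine the inertia-stack decomposition (\ref{inertia}) with the $\bbT$-equivariant cohomology theorem for hypertoric Deligne--Mumford stacks proved just above, applied to each quotient stacky hyperplane arrangement $\A/\sigma$, and then pin down the Chen--Ruan orbifold cup product by a direct $\bbT$-equivariant obstruction-bundle computation. This is the equivariant refinement of the non-equivariant Chen--Ruan ring computed in \cite{JT} and \cite{GH}, so the conceptual structure is the same; the new content is tracking the extra $\cc^\times$.

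\textbf{Module structure.} First I would use (\ref{inertia}), $IX_\A=\coprod_{(v,\sigma)\in\bbox(\Delta_\beta)}X_{\A/\sigma}$, to obtain $H^*_{\CR,\bbT}(X_\A)=\bigoplus_{(v,\sigma)}H^*_\bbT(X_{\A/\sigma})$ as $R_\bbT$-modules; the Chen--Ruan degree shift does not affect the $R_\bbT$-algebra structure. Each quotient $\A/\sigma=(N(\sigma),\beta(\sigma),\theta(\sigma))$ is again a stacky hyperplane arrangement, so the $\bbT$-equivariant cohomology theorem above presents $H^*_\bbT(X_{\A/\sigma})$ as a quotient of the polynomial ring $R_\bbT[u_1,\dots,u_m,\hbar]$ by the circuit relations of $\A/\sigma$, with $u_i\mapsto 0$ whenever $\sigma\cup\rho_i$ is not a cone of $\Delta_\beta$. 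Under this identification $\mathds{1}_{(v,\sigma)}$ is the unit of the summand $H^*_\bbT(X_{\A/\sigma})$, regarded as a module over the untwisted sector $H^*_\bbT(X_\A)$ by restriction along the closed immersion $X_{\A/\sigma}\hookrightarrow X_\A$. This data produces an $R_\bbT$-algebra map $\Phi$ from the presented ring onto $H^*_{\CR,\bbT}(X_\A)$, provided the defining relations are checked.

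\textbf{Relations.} The ideal $I$ collects the circuit relations, which hold in $H^*_\bbT(X_\A)$ by the theorem above; the extra $\cc^\times$-factor is exactly what turns the naive $\prod_{j\in S^-}(-u_j)$ of the non-equivariant computations of \cite{JT}, \cite{GH} into $\prod_{j\in S^-}(\hbar-u_j)$, since $c_1(L_j^{-1})=\hbar-u_j$. For $J$: the relation $\mathds{1}_{(v,\tau)}\cdot u_i=0$ when $\tau\cup\rho_i$ is not a cone is the vanishing of $u_i|_{X_{\A/\tau}}$, as the divisor $\{z_i=0\}$ misses the substack. The remaining relations describe $\mathds{1}_{(v_1,\tau_1)}\cdot\mathds{1}_{(v_2,\tau_2)}$, and for these I would compute the orbifold product directly: it equals the pushforward, along the evaluation map to $X_{\A/\tau_3}$, of the $\bbT$-equivariant Euler class of the obstruction bundle over the three-pointed twisted sector attached to $(\tau_1),(\tau_2),(\tau_3)$. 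Running the toric Deligne--Mumford calculation of \cite{BCS}, \cite{Jiang}, \cite{JT2} and the hypertoric calculation of \cite{GH} $\bbT$-equivariantly: this sector is nonempty precisely when $\tau_1\cup\tau_2$ is a cone in $\Delta_\beta$ (which yields the last relation when it is not); the obstruction bundle contributes $(-1)^{|J|}\prod_{i\in J}u_i^2$; and the Gysin pushforward along the excess normal directions contributes $(-1)^{|I|}\prod_{i\in I}u_i$ together with $\mathds{1}_{(v_3,\tau_3)}$. When $\overline{v}_1=\check{\overline{v}}_2$ one has $I=\emptyset$ and the evaluation map is an isomorphism onto $X_{\A/\tau_3}$, so only $(-1)^{|J|}\prod_{i\in J}u_i^2$ survives. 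This reproduces (\ref{Chen-Ruan:relations}), so $\Phi$ is well defined.

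\textbf{Isomorphism and main obstacle.} To finish I would check $\Phi$ is bijective. Surjectivity is immediate, since $H^*_\bbT(X_{\A/\sigma})=\mathds{1}_{(v,\sigma)}\cdot H^*_\bbT(X_\A)$ is generated over $R_\bbT$ by the $u_i$ and $\mathds{1}_{(v,\sigma)}$. Both sides are free $R_\bbT$-modules --- hypertoric Deligne--Mumford stacks and the substacks $X_{\A/\sigma}$ are equivariantly formal, cf.\ \cite{HS} --- and, using the relations in $I+J$, the presented ring is spanned over $R_\bbT$ by the elements $\mathds{1}_{(v,\sigma)}\cdot(\text{monomial in the }u_i)$ mapping onto the standard $R_\bbT$-bases of the summands $H^*_\bbT(X_{\A/\sigma})$; hence $\Phi$ is a surjection of free modules carrying a spanning set onto a basis, so it is an isomorphism. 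The main obstacle is the equivariant obstruction-bundle and excess-intersection analysis behind the second and third cases of (\ref{Chen-Ruan:relations}): identifying the three-pointed sector with the correct toric substack, computing the $\bbT$-equivariant obstruction bundle, and verifying that its $\cc^\times$-weights cancel so that $\hbar$ enters only through the circuit relations $I$ and not through $J$. The sign bookkeeping $(-1)^{|I|+|J|}$ and the precise distinction between the index sets $I$ and $J$ is the delicate point, although it parallels the non-equivariant computations already in \cite{JT} and \cite{GH}.
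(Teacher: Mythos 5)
Your proposal is correct and follows essentially the same route as the paper's proof: decompose $H^*_{\CR,\bbT}(X_\A)$ by box elements via the inertia stack, identify each sector $H^*_\bbT(X_{\A/\sigma})$ with $\mathds{1}_{(v,\sigma)}\cdot R_\bbT[M_\beta]$ using the quotient arrangement and the equivariant cohomology presentation, and then verify the product relations by the (equivariant version of the) orbifold cup product/obstruction-bundle computation of \cite{JT}, which is exactly what the paper cites. Your added bijectivity check and the flag about the $\cc^\times$-weights in the obstruction bundle only spell out details the paper delegates to \cite{JT}.
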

\begin{proof}
Let $M_\beta$ be the matroid determined by $\beta$. Then $\qq[M_\beta]$ is the cohomology ring of hypertoric Deligne-Mumford stack 
$\M$ and we write:
$$H^*_{\bbT}(X_\A)\cong R_\bbT [M_\beta]=\frac{R_\bbT [u_1,\cdots,u_m,\hbar]}{I_{M_\beta}}.$$
By the definition of Chen-Ruan cohomology
$$H^*_{\CR, \bbT}(X_\A)=\bigoplus_{(v,\sigma)\in \bbox(\Delta_\beta)}H^*_{\bbT}(X_{\A/\sigma}),$$
where $H^*_{\bbT}(X_{\A/\sigma})$ is the $\bbT$-equivariant cohomology of the twisted sector 
$X_{\A/\sigma}$.  According to \cite{JT}, $H^*_{\bbT}(X_{\A/\sigma})$  is isomorphic to 
$R_\bbT [M_{\beta(\sigma)}]$, the equivariant cohomology ring of the induced matroid 
$M_{\beta(\sigma)}$.  By the proof of \cite[Proposition 5.8]{JT}, 
$R_\bbT [M_{\beta(\sigma)}]\cong \mathds{1}_{(v,\sigma)}\cdot R_\bbT [M_\beta]$. 
Then as a vector space, we have 
$$H^*_{\CR,\bbT}(X_\A)=\bigoplus_{(v,\sigma)\in \bbox(\Delta_\beta)}\mathds{1}_{(v,\sigma)}\cdot R_\bbT [M_\beta].$$

Then to prove the formula in the theorem, it is sufficient to prove the orbifold cup product of the box elements and the generators $u_i$ satisfy the relation in $(2)$, but all of these relations come from the definition of orbifold cup product calculated in \cite[Lemma 5.13]{JT} and in the proof of \cite[Theorem 1.1]{JT} in \cite[Section 5.3]{JT}.
\end{proof}

\subsection{Gromov-Witten theory}\label{GW:theory}

We briefly review some basic definitions on orbifold Gromov-Witten theory and the reduced virtual fundamental cycles.

\subsubsection{Equivariant Gromov-Witten invariants}
Let $X$ be a smooth Deligne-Mumford stack, endowed with a torus $T$ action.  The moduli stack 
$\cM_{0,n}(X, d)$ of degree $d\in H_2(X, \qq)$ twisted stable maps to $X$ carries a $T$-action, and a 
virtual fundamental cycle $[\cM_{0,n}(X,d)]^{\virt}\in A_{*, T}(\cM_{0,n}(X,d))$.
There are $T$-equivariant evaluation maps\footnote{We ignore the issue of trivializing the marked gerbes in our moduli problem. A detailed discussion on this can be found in \cite{AGV}.}:
$$\ev_i: \cM_{0,n}(X,d)\to IX$$
to the inertia stack $IX$ of $X$ for $1\leq i\leq n$, see \cite{CR2}, \cite{AGV}. 

Given $\gamma_1,...,\gamma_n\in H_{\CR, T}^*(X)$, we consider the following genus $0$ $T$-equivariant Gromov-Witten invariant:
$$\langle \gamma_1,\cdots,\gamma_n\rangle_{0,n,d}^{X}=\int^T_{[\cM_{0,n}(X,d)]^{\virt}}\prod_{i}\ev_i^\star\gamma_i.$$

The moduli stack $\cM_{0,n}(X,d)$ has components indexed by the components of inertia stack $IX$. 
We write 
$$IX=\bigsqcup_{f\in \sf B}X_f$$
for the decomposition of $IX$ into connected components, where $\sf B$ is the index set. 
Then the component $\cM_{0,n}(X,d)^{f_1,\cdots,f_n}$ is the one which under evaluation maps $\ev_i$, the images lie in
the component $X_{f_i}$. 
The virtual dimension of $\cM_{0,n}(X,d)^{f_1,\cdots,f_n}$ is:
\begin{equation}\label{virtual:dimension}
-K_{X}\cdot d+\dim(X)+n-3-\sum_{i}\age(X_{f_i}).
\end{equation}

If $X$ is not compact (like our hypertoric Deligne-Mumford stacks), then the moduli stack $\cM_{0,n}(X, d)$ is non-compact. 
There is a $T$-action on $\cM_{0,n}(X, d)$.  Assume that the $T$-fixed locus $\cM_{0,n}(X, d)^T$ is compact, then $T$-equivariant GW invariants can be defined in the same way, replacing equivariant integration by equivariant residues.


Let $\NE(X)\subset H_2(X, \rr)$ be the cone generated by classes of effective curves and set 
$$\NE(X)_{\zz}:=\{d\in H_2(X,\zz): d\in \NE(X)\}.$$
Let $R_T:=H^*_{T}(\pt)$ and $R_T[\![Q]\!]$ the formal power series ring
\[
R_T[\![Q]\!] = \left\{ \sum_{d \in \tiny\NE(X)_\zz} a_d Q^d : a_d \in R \right\}
\]
so that $Q$ is a so-called \emph{Novikov variable}~\cite[III~5.2.1]{Manin}.  For $\gamma_i, \gamma_j, t\in H_{\CR,T}^*(X)$, the big $\bbT$-equivariant quantum product is defined by:
\begin{equation}\label{big:quantum:product}
(\gamma_i\star_{\bbig,t} \gamma_j, \gamma_k)=\sum_{d\in \tiny\NE(X)_{\zz}}\sum_{n\geq 0}Q^d\langle \gamma_i, \gamma_j, \underbrace{t,\cdots,t}_{n}, \gamma_k\rangle_{0,n+3,d}^{X}
\end{equation}
The small $\bbT$-equivariant quantum product is defined by putting $n=0$:
\begin{equation}\label{quantum:product2}
(\gamma_i\star \gamma_j, \gamma_k)=\sum_{d\in \tiny\NE(X)_{\zz}}Q^d\langle \gamma_i, \gamma_j, \gamma_k\rangle_{0,3,d}^{X}
\end{equation}
or 
\[
\gamma_i\star \gamma_j=\sum_{d\in \tiny\NE(X)_{\zz}}Q^d\cdot \inv^\star\cdot \ev_{3,\star}(\ev_1^\star(\gamma_i)\ev_j^\star(\gamma_j)\cap [\cM_{0,3}(X,d)]^{\virt})
\]
where $\inv: IX\to IX$ denotes the involution sending $(x, g)\mapsto (x, g^{-1})$, for 
$x\in X, g\in \Aut(x)$. 
The quantum product satisfies the associativity property and makes $H_{\CR,T}^*(X)\otimes R_T[\![Q]\!]$ a ring, which is called the small equivariant  quantum cohomology ring. 

Assume that $d\neq 0$ and $D\in H^2(X,\qq)$ a divisor class. Then the {\em divisor equation} of Gromov-Witten invariants is:
\begin{equation}\label{Divisor:Equation}
\langle D, \gamma_i, \gamma_j\rangle_{0,3,d}^{X}=(D\cdot d)\cdot \langle\gamma_i, \gamma_j\rangle_{0,2,d}^{X}.
\end{equation}
So in order to calculate the quantum product by a divisor $D$, it suffices to study two point Gromov-Witten invariants of $X$.

\subsubsection{Reduced virtual fundamental cycle}\label{reduced:virtual:cycle}

Let $X$ be a smooth quasi-projective Deligne-Mumford stack with a nowhere vanishing holomorphic symplectic form $\omega\in \Omega^{0,2}(X)$.  By \cite{KL}, the usual non-equivariant virtual fundamental cycle vanishes when $d\neq 0$. The hypertoric Deligne-Mumford stack $X_\A$ admits such a holomorphic symplectic form $\omega$ induced from the canonical symplectic form on $T^*\cc^m$. In this situation we need to consider reduced virtual fundamental classes in order to obtain non-trivial invariants.

We recall the reduced virtual fundamental cycle construction following \cite[Section 4.2]{BMO} (this is a special case of cosection localized cycle \cite{KL}). 
Let $\cC$ be a fixed twisted nodal curve, and let $\cM_{\cC}:=\cM_{\cC}(X,d)$ denote the moduli stack of twisted maps from 
$\cC$ to $X$ with degree $d$.  Recall that in \cite{Behrend}, the obstruction theory for $\cM_{\cC}$ is given by:
$$R\pi_{\star}(\ev^\star T_{X})^{\vee}\to L_{\cM_{\cC}},$$
where $L_{\cM_{\cC}}$ is the cotangent complex of $\cM_{\cC}$, and 
$$\ev: \cC\times \cM_{\cC}\to IX$$
$$\pi: \cC\times \cM_{\cC}\to \cM_{\cC}$$
are evaluation maps and projection to $\cM_{\cC}$, respectively. 

Denote by $\omega_{\pi}$ the relative dualizing sheaf.  Pairing with the symplectic form and pullback differentials we have a map:
$$\ev^\star(T_{X})\to \omega_{\pi}\otimes (\cc\omega)^*.$$
Then this induces a morphism
$$R\pi_{\star}(\omega_{\pi})^\vee\otimes \cc\omega\to R\pi_{\star}(\ev^\star(T_{X})^{\vee}).$$
Taking truncation we get a morphism:
$$\iota: \tau_{\leq -1}R\pi_{\star}(\omega_{\pi})^\vee\otimes \cc\omega\to R\pi_{\star}(\ev^\star(T_{X})^{\vee}).$$
The truncation is a trivial line bundle, but carries a nontrivial action in the equivariant setting depending on the $T$-action on $X$.

There is an induced map from the mapping cone of $\iota$: 
\begin{equation}\label{reduced:obstruction:theory}
C(\iota)\to L_{\cM_{\cC}}
\end{equation}
which satisfies the conditions in the perfect obstruction theory of \cite{BF}, \cite{LT}.  The reduced virtual fundamental cycle of 
$\cM_\cC$ is defined by the reduced obstruction theory (\ref{reduced:obstruction:theory}). The reduced virtual fundamental class $[\cM_{0,n}(X, d)]^{\red}$ for $\cM_{0,n}(X,d)$ is obtained by applying the construction above to the universal family over $\cM_{0,n}(X,d)$.

In the hypertoric Deligne-Mumford stack $X_\A$ case, there is a $\bbT$-action on $X_\A$ and the moduli stack 
$\cM_{0,n}(X_\A,d)$.   The extra $\cc^\times$ acts on the space of symplectic forms $\cc\omega$ with nontrivial weights. Hence from
(\ref{reduced:obstruction:theory}), 
\begin{equation}\label{reduced:virtual:class:virtual:class}
[\cM_{0,n}(X_\A,d)]^{\virt}=\hbar [\cM_{0,n}(X_\A,d)]^{\red}.
\end{equation}
The detail argument of this relation can be found in \cite[Section 4.2]{BMO}. Although \cite{BMO} works with smooth schemes, the arguments hold true for smooth Deligne-Mumford stacks. 

\subsubsection{Maps from $\pp^{1}_{s_1,s_2}$ to $X_{\A}$}

Let $\pp^{1}_{s_1,s_2}$ be the unique  $\pp^1$-orbifold with stacky points $P_1=[1,0]=B\mu_{s_1}$ and $P_2=[0,1]=B\mu_{s_2}$, and no other stacky points. For the purpose of calculation, we classify the morphisms from $\pp^{1}_{s_1,s_2}$ to $X_{\A}$. 

Recall that our hypertoric Deligne-Mumford stack $X_\A$ is open, and the core is a union of toric Deligne-Mumford stacks given by the bounded polytope of the stacky hyperplane arrangement, see \S \ref{core:hypertoricDMstack}.  A morphism $\pp^{1}_{s_1,s_2}\to X_{\A}$ must have image in an irreducible component of the core, i.e. must lie in a toric Deligne-Mumford stack inside the core. 
So an argument similar to \cite[Section 3]{CCIT} works for hypertoric Deligne-mumford stacks.
 
The torus $\bbT$-fixed points of $X_\A$ are all isolated, and have an one-to-one correspondence with the top dimensional cones in $\Delta_{\beta}$.  
For $\sigma\in \Delta_{\beta}$ a top dimensional cone, let $(X_\A)_{\sigma}$ denote the fixed point corresponding to $\sigma$. 
Let $\sigma, \sigma^\prime\in \Delta_{\beta}$ be two top dimensional cones,  we write $\sigma|\sigma^\prime$ if they intersect along a codimension-1 face.  Denote by $j$ the unique index such that $\overline{b}_j\in\sigma$,
$\overline{b}_j\notin\sigma^\prime$; and $j^\prime$  such that $\overline{b}_{j^\prime}\in\sigma^\prime$,
$\overline{b}_{j^\prime}\notin\sigma$. 

Recall the box element $(f,\tau)\in\mbox{Box}$ is given by 
$f=\sum_{i=1}^{m}f_i\overline{b}_i$ for $0\leq f_i\leq 1$. 
Note that $f_i=0$ if $i\notin\tau$. 

The following is analogous to \cite[Proposition 10]{CCIT}:
\begin{prop}\label{classification:P1orbifold}
Let $X_\A$ be the hypertoric Deligne-Mumford stack associated to a stacky hyperplane arrangement $\A$. Suppose that 
$\sigma, \sigma^\prime\in \Delta_{\beta}$ such that $\sigma|\sigma^\prime$, and 
$(f,\tau)\in\bbbox$ such that $\tau\subset \sigma$. Then the following are equivalent:
\begin{enumerate}
\item  A representable morphism $g: \pp^1_{s_1,s_2}\to X_\A$ such that 
$f(0)=(X_\A)_{\sigma}$, $f(\infty)=(X_\A)_{\sigma^\prime}$, and the restriction
$g|_{0}: B\mu_{s_1}\to (X_\A)_{\sigma}$ gives $f$.
\item A positive rational number $c$ such that $\langle c\rangle=f_j$.
\end{enumerate}
\end{prop}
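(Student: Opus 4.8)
The plan is to prove the equivalence by reducing to the classification of orbifold stable maps into toric Deligne–Mumford stacks, as established in \cite[Section 3 and Proposition 10]{CCIT}. The starting observation is that any representable morphism $g\colon\pp^1_{s_1,s_2}\to X_\A$ whose image is a curve must, after composing with the deformation retraction onto the core $C(X_\A)$, factor through a single irreducible component of the core; by the description of the core in \S\ref{core:hypertoricDMstack}, that component is a toric Deligne–Mumford stack $\chi(\mathbf{\Sigma_U})$ for some bounded polytope $\mathcal{P}_U$. Since the two fixed points $(X_\A)_\sigma$ and $(X_\A)_{\sigma'}$ are connected by the $\bbT$-invariant curve corresponding to the wall $\sigma\mid\sigma'$, both lie in the closure of a single such toric substack, and the desired morphism is a $\bbT$-invariant orbifold line in it. Thus the problem becomes: classify representable maps $\pp^1_{s_1,s_2}\to\chi(\mathbf{\Sigma_U})$ sending $0,\infty$ to the two torus-fixed points adjacent along a wall, with prescribed evaluation $f$ at $0$.

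From here I would run the CCIT argument directly. In a toric DM stack presented as $[Z/K]$ with $Z\subset\mathbb{C}^{\#\text{rays}}$, a representable map from $\pp^1_{s_1,s_2}$ landing in the closure of the one-dimensional orbit associated to the wall is given, in homogeneous coordinates, by monomials $t_1^{a_i}t_2^{b_i}$ on $\pp^1_{s_1,s_2}=[(\mathbb{C}^2\setminus 0)/\mathbb{C}^\times]$, where the exponents $a_i,b_i$ are determined up to the $K$-action by a single rational parameter measuring the degree along the wall curve. Concretely, one writes the curve class as a positive multiple $c$ of the wall class $\beta^S$ (for the circuit $S$ separating $\sigma$ from $\sigma'$), and the only nonconstant coordinates are $z_j$ (vanishing at $0$) and $z_{j'}$ (vanishing at $\infty$), with the rest constant and nonzero. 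Representability at $P_1=B\mu_{s_1}$ forces the order of vanishing of $z_j$ to be $s_1\langle c\rangle$-compatible: the monodromy weight of $g$ at $0$ in the $j$-th coordinate is exactly $\langle c\rangle$, and under the identification of $\mathrm{Box}$ with fractional combinations $\sum f_i\overline b_i$, the box element $f$ read off from $g|_0$ has $j$-th component $f_j=\langle c\rangle$. Conversely, given $c>0$ with $\langle c\rangle=f_j$, the monomial map with these exponents is well defined on $\pp^1_{s_1,s_2}$, representable (because the chosen orbifold structure $s_1,s_2$ can always be taken to accommodate the denominators of $c$, and the statement fixes $s_1,s_2$ compatibly with $f$), and has the prescribed images and restriction. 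The requirement that the remaining $f_i$ ($i\neq j$) also match is automatic: along the wall curve only the $j$-coordinate direction carries monodromy at $0$, so $f_i=0$ for $i\notin\tau$, $i\neq j$, matching $(f,\tau)$ with $\tau\subset\sigma$.

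The two implications then split cleanly. For $(1)\Rightarrow(2)$: given $g$, restrict to the torus-invariant curve it must map into, extract the degree $c$ along that curve from the pullback of the ample generator dual to $\beta^S$, and check $\langle c\rangle=f_j$ by computing the age/monodromy of $g$ at $P_1$ against the combinatorial formula for $f$. For $(2)\Rightarrow(1)$: given $c$, write down the explicit monomial map in homogeneous coordinates for the toric substack containing both fixed points, verify it is representable and lands in $X_\A\subset\chi(\mathbf{\Sigma_\theta})$ (it does, since the wall curve lies in the core, which lies in $X_\A$), and that $g|_0$ recovers $f$. The main obstacle I anticipate is the bookkeeping in the multi-fan setting: unlike the genuine-fan case of \cite{CCIT}, here $\Delta_\beta$ is only a simplicial multi-fan, so "the toric DM stack containing two adjacent fixed points" is a substack of the core rather than an open chart of $X_\A$, and one must be careful that the homogeneous-coordinate computation done inside $\chi(\mathbf{\Sigma_U})$ is compatible with the global coordinates $z_i,w_i$ on $Y\subset\mathbb{C}^{2m}$ — in particular that the $w_i$ coordinates stay zero along the image, so that the curve really lies in the zero section direction picked out by $S^+$ versus $S^-$. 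This compatibility is exactly what the hypertoric ideal $I_{\beta^\vee}$ and the core description guarantee, but it needs to be spelled out.
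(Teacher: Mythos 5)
Your proposal takes essentially the same route as the paper's proof: observe that the image of any such morphism must lie in an irreducible component of the core, which is a toric Deligne-Mumford stack, and then reduce to \cite[Proposition 10]{CCIT} — your explicit monomial-map bookkeeping is just an unwinding of that cited toric result. One small caution: the containment of the image in the core should be justified by the fact that compact (orbifold) curves are contracted by the projective morphism to the affine quotient $\overline{X}^0$, hence lie in the core, rather than by ``composing with the deformation retraction,'' which is only a topological statement and does not show the algebraic map itself factors through the core.
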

\begin{proof}
Since any morphism $\pp^1_{s_1,s_2}\to X_\A$ must have image inside an irreducible component of the core, which is a toric Deligne-Mumford stack, we are reduced to the case in \cite[Proposition 10]{CCIT}. 
\end{proof}
\begin{rmk}
As in \cite[Remark 11]{CCIT}, the morphism in the above Proposition actually determines both $s_2$ and the box element 
$f^\prime$ given by the restriction $g|_{\infty}: B\mu_{s_2}\to (X_\A)_{\sigma^\prime}$. Moreover they satisfy the following 
relation 
\begin{equation}\label{degree:P1:to:XA}
f+\lfloor c\rfloor b_j +q^\prime b_{j^\prime}+f^\prime \equiv 0 \text{~mod~} \bigoplus_{i\in\sigma\cap\sigma^\prime}\zz b_i
\end{equation}
for some $q^\prime\in \zz_{\geq 0}$.

Hypertoric Deligne-Mumford stack $X_\A$ can be deformed into a hypertoric Deligne-Mumford stack $X_\lambda$ corresponding to subregular parameter $\lambda$ (see \S \ref{deformation:hypertoricDM:stack}), so that the curve classes are all lie in closed substacks $\cM^S$ associated to circuits $S$.  So any morphism $\pp^1_{s_1,s_2}\to X_\A$ must have image inside $\cM^S$, hence actually inside the weighted projective stack $\pp^{|S|-1}_{\mathbf{w}}$.  The above degree formula (\ref{degree:P1:to:XA}) can be written down in terms of the weights $\mathbf{w}=(w_1, \cdots,w_n)$.
\end{rmk}

\section{Deformation of hypertoric Deligne-Mumford stacks}\label{Section:deformation:hypertoricDM}

In this section we consider the deformation of symplectic Deligne-Mumford stacks in the hypertoric case in order to compute its Gromov-Witten invariants. For this purpose, we first recall the construction of hypertoric Deligne-Mumford stacks in symplectic category by \cite{HP}, \cite{GH}. 

\subsection{Symplectic construction}
 
In \S \ref{Section:hypertoricDMstack}, the hypertoric Deligne-Mumford stack is constructed as a quotient stack $[Y/G]$, where $Y\subset T^*\cc^m$ is a locally closed subvariety.  Hypertoric Deligne-Mumford stack in symplectic category can be constructed in a similar way. 
Let $\hh:=T^*\cc^m$ be the cotangent bundle of $\cc^m$, which is a hyperk\"ahler manifold with a canonical holomorphic symplectic form $\omega_{\cc}$.  The torus $T=(\cc^\times)^m$ acts on $\hh$ by diagonal on $\cc^m$ and minus the diagonal on the fibre. 

We introduce the hyperk\"ahler $T$-moment map $\widetilde{\mu}=(\widetilde{\mu}_{\rr}, \widetilde{\mu}_{\cc})$ on $\hh$.  Let 
$\{t_i\}_{i=1}^{m}$ be a dual basis to the basis $\{\epsilon_i\}_{i=1}^{m}$ in $(\mathfrak{t}^m)^*$, where $\mathfrak{t}^m$ is the Lie algebra of $T$.  Then 
\begin{align}\label{eq:moment:map}
\widetilde{\mu}_{\rr}(z,w)=\frac{1}{2}\sum_{i=1}^{m}(\|z_i\|^2-\|w_i\|^2)t_i\in (\mathfrak{t}^m)^*;\\ \nonumber
\widetilde{\mu}_{\cc}(z,w)=\sum_{i=1}^{m}z_iw_i t_i\in (\mathfrak{t}_\cc^m)^*.
\end{align}
Recall in our stacky hyperplane arrangement $\A=(N,\beta,\theta)$, 
$\mathcal{H}=\{H_1,\cdots,H_m\}$ in $M_\rr$ defines a hyperplane arrangement (weighted hyperplane arrangement in \cite{GH})
in $(\mathfrak{t}^m)^*=M_\rr$.  Then there are exact sequences similar to the ones on \S \ref{Section:stacky:hyperplane:arrangement}:
\begin{equation}
0\rightarrow \mathfrak{t}^{m-d}\longrightarrow \mathfrak{t}^m\stackrel{\beta}{\longrightarrow} \mathfrak{t}^d\rightarrow 0,
\end{equation}
where $\beta(\epsilon_i)=\overline{b}_i$; and 
\begin{equation}
0\rightarrow (\mathfrak{t}^{d})^*\stackrel{\beta^\vee}{\longrightarrow} (\mathfrak{t}^m)^*\stackrel{\iota^*}{\longrightarrow} (\mathfrak{t}^{m-d})^*\rightarrow 0,
\end{equation}
where $\iota^*(t_i)=\lambda_i$.  The $T$-moment map in (\ref{eq:moment:map}) induces a subtorus $(\cc^\times)^{m-d}$-moment map by:
\begin{align}\label{eq:moment:map:subtorus}
\mu_{\rr}(z,w)=\iota^*\left(\frac{1}{2}\sum_{i=1}^{m}(\|z_i\|^2-\|w_i\|^2)t_i\in (\mathfrak{t}^m)^*\right);\\ \nonumber
\mu_{\cc}(z,w)=\iota^*\left(\sum_{i=1}^{m}z_iw_i t_i\in (\mathfrak{t}_\cc^m)^*\right).
\end{align}
The hyperk\"ahler moment map is surjective onto $(\mathfrak{t}^m)^*\oplus(\mathfrak{t}_\cc^m)^*$. 
Let $(\theta,\lambda)\in (\mathfrak{t}^{m-d})^*\oplus(\mathfrak{t}_\cc^{m-d})^*$ be a regular value.  The hypertoric Deligne-Mumford stack
(actually in this case hypertoric orbifold) is then defined by
$$\mathcal{M}(\mathcal{H})=\hh\mmod_{(\theta,\lambda)}T^{m-d}$$
which is the hyperk\"ahler reduction of $\hh$ by $T^{m-d}$.  We think of this as a GIT quotient stack
$$\mathcal{M}_{\lambda}(\mathcal{H})=[\mu_{\cc}^{-1}(\lambda)/_{\theta}T^{m-d}]$$
by the stability parameter $\theta$. 
\begin{rmk}
The algebraic version of hypertoric Deligne-Mumford stack $X_\A$ defined in \S \ref{Section:hypertoricDMstack} is the case that $\lambda=0$. 

In general, if $N$ has torsion $N_{\mbox{\tiny tor}}=\mu$, which is a finite abelian group, $X_\A$ is a $\mu$-gerbe over the orbifold 
$\MHL$. 
\end{rmk}

\subsection{Deformations}\label{deformation:hypertoricDM:stack}

We adopt the argument by Konno \cite{Konno} as reviewed in \cite[Section 4.1]{MS}. 

According to \cite{Konno} and \cite[Section 4.1]{MS}, we call a parameter $\lambda\in (\mathfrak{t}_\cc^{m-d})^*$ {\em sub-regular} if $\lambda$ lies on a unique root hyperplane
$$K_S=\mbox{span}(\iota^*e_i^\vee: i\notin S)$$
for $S\subset\A=\{1,\cdots,m\}$.  It is easy to check that $S$ is a circuit. 
\begin{prop}[\cite{Konno}, Theorem 5.10]\label{Konno:stable}
Let $(z,w)\in \mu_\cc^{-1}(\lambda)$. Then $(z,w)$ is $\theta$-stable if and only if either if the following conditions hold:
\begin{enumerate}
\item $z_i\neq 0$ for some $i\in S^+$;
\item $w_i\neq 0$ for some $i\in S^-$.
\end{enumerate}
\end{prop}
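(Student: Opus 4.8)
The plan is to unwind the GIT stability condition for the $T^{m-d}$-action on the level set $\mu_\cc^{-1}(\lambda)$ and to translate it into the stated nonvanishing conditions on the coordinates $(z,w)$, using the special form of the moment map (\ref{eq:moment:map}) together with the fact that $\lambda$ is sub-regular and hence lies on the single root hyperplane $K_S$ for the circuit $S=S^+\sqcup S^-$. First I would recall that $\theta$-stability of a point $(z,w)$ for the $T^{m-d}$-action means that the closure of the $T^{m-d}_\cc$-orbit of $(z,w)$ in $\mu_\cc^{-1}(\lambda)$, linearized by the character $\theta$, does not meet the unstable locus; equivalently, by the Hilbert–Mumford criterion applied to one-parameter subgroups of $T^{m-d}_\cc$, every limit $\lim_{t\to 0}\rho(t)\cdot(z,w)$ that exists must pair nonnegatively (in fact positively, since $\theta$ is generic) with $\theta$. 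Since the ambient space is $T^*\cc^m$ with the standard diagonal/anti-diagonal torus action, the vanishing loci $\{z_i=0\}$ and $\{w_i=0\}$ are exactly the coordinate divisors contracted by the various one-parameter subgroups, so the unstable locus is cut out by the irrelevant ideal $\mathcal{I}_\theta$ of \S\ref{LawrenceDMstack} intersected with $\mu_\cc^{-1}(\lambda)$.

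The key step is to identify, on the locus $\mu_\cc^{-1}(\lambda)$ with $\lambda\in K_S$, which monomials in the irrelevant ideal are forced to vanish. Because $\lambda$ lies on $K_S=\mathrm{span}(\iota^*e_i^\vee : i\notin S)$, the complex moment map equations $\iota^*(\sum_i z_iw_i t_i)=\lambda$ impose $\sum_{i\in S}(\text{coefficient})\cdot z_iw_i=0$-type relations among the circuit coordinates while leaving $z_iw_i$ for $i\notin S$ pinned to the generic values dictated by $\lambda$; in particular $z_i\neq 0$ and $w_i\neq 0$ simultaneously cannot fail for $i\notin S$ in a way that destabilizes, so destabilization must come from a one-parameter subgroup supported on the circuit $S$. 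I would then check that the one-parameter subgroups of $T^{m-d}_\cc$ relevant near $K_S$ are, up to the generic sign dictated by $\theta$, those scaling the $z_i$ for $i\in S^+$ with one sign and the $w_j$ for $j\in S^-$ with the other; the Hilbert–Mumford computation then shows the limit exists and is destabilizing precisely when $z_i=0$ for all $i\in S^+$ \emph{and} $w_j=0$ for all $j\in S^-$. Negating this yields exactly conditions (1) and (2) of the Proposition.

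Concretely I would carry out the steps in this order: (i) write the stability condition via Hilbert–Mumford for $T^{m-d}_\cc$ acting on $T^*\cc^m$ linearized by $\theta$, and reduce it to a statement about which coordinate subspaces one-parameter subgroups contract; (ii) use the genericity of $\theta$ (equivalently simplicity of $\mathcal{H}$) to see that the only circuit that can produce a destabilizing one-parameter subgroup \emph{within $\mu_\cc^{-1}(\lambda)$} is $S$, the unique circuit whose root hyperplane contains $\lambda$; (iii) for that $S$, compute the Mumford weight of the relevant subgroup against $\theta$ and against the splitting $S=S^+\sqcup S^-$ determined in the definition of sub-regular parameter (recall $S^+,S^-$ are fixed by requiring $(\cap_{i\in S^+}G_i)\cap(\cap_{j\in S^-}F_j)=\emptyset$), and extract that the limit is destabilizing iff all $z_i$ ($i\in S^+$) and all $w_j$ ($j\in S^-$) vanish; (iv) conclude. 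The main obstacle I anticipate is step (ii): showing rigorously that no circuit other than $S$ contributes, i.e. that the constraints imposed by $\mu_\cc^{-1}(\lambda)$ for sub-regular $\lambda$ rule out all other potential destabilizing directions. This is really where Konno's hypothesis that $\lambda$ is sub-regular (on a \emph{unique} root hyperplane) is used, and I would lean on the description of the matroid/multi-fan $\Delta_\beta$ and on the analysis already present in \cite{Konno} and \cite[Section 4.1]{MS} to close this gap rather than re-deriving the combinatorics from scratch.
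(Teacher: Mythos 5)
The first thing to say is that the paper itself offers no proof of this statement: it is imported verbatim as \cite{Konno}, Theorem 5.10 (see also \cite[Section 4.1]{MS}), so there is no internal argument to compare yours against. Judged on its own terms, your Hilbert--Mumford framing is the right general shape (and is roughly how Konno argues), and the easy direction is essentially in place: when $z_i=0$ for all $i\in S^+$ and $w_j=0$ for all $j\in S^-$, the one-parameter subgroup of $T^{m-d}$ dual to the circuit relation $\sum_{i\in S^+}w_i\overline{b}_i-\sum_{j\in S^-}w_j\overline{b}_j=0$ has a limit and pairs with $\theta$ with the destabilizing sign, the sign bookkeeping being exactly the coorientation condition $(\cap_{i\in S^+}G_i)\cap(\cap_{j\in S^-}F_j)=\emptyset$ defining the splitting $S=S^+\sqcup S^-$.

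The genuine gap is the converse, which is precisely your step (ii), and as written it would not close. First, destabilizing one-parameter subgroups are not indexed by circuits, so ``no circuit other than $S$ contributes'' is not the statement you actually need; the standard and workable formulation is the cone criterion: $(z,w)$ is $\theta$-semistable iff $\theta$ lies in the cone generated by $\{a_i: z_i\neq 0\}\cup\{-a_i: w_i\neq 0\}$, and the content of the theorem is that the equation $\sum_i z_iw_i\,a_i=\lambda$ with $\lambda$ sub-regular forces this cone condition unless all $z_i$ ($i\in S^+$) and all $w_j$ ($j\in S^-$) vanish. Second, the intermediate claim you use to reduce to the circuit $S$ --- that for $i\notin S$ the products $z_iw_i$ are ``pinned to the generic values dictated by $\lambda$'' --- is false as stated: the moment map imposes only $m-d$ linear conditions on the $m$ products $z_iw_i$, so individual products are not determined, and the reduction does not follow from what you wrote. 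Finally, your proposed repair --- to ``lean on the analysis already present in \cite{Konno}'' --- is circular, since the statement being proved \emph{is} Konno's Theorem 5.10; to have a proof you must actually carry out the cone/rank argument at a sub-regular value (where removing $S$ drops the span of the $a_i$ by exactly one), not defer it to the source of the theorem.
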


For a sub-regular $\lambda$, the hypertoric Deligne-Mumford stack $X_\A$ can be defined to be 
$\MHL$ with good properties:
\begin{prop}\label{deformation:hypertoric:subregular}
$\MHL$ contains a codimension $|S|-1$ substack $\cM^{S}$, which is a weighted projective 
$\pp^{|S|-1}_{\mathbf{w}}$-bundle over an affine hypertoric stack $\cM_0^{S}$ for a circuit $S\subset \A$. All positive dimensional projective substacks in $\MHL$ are contained in $\cM^{S}$. 
\end{prop}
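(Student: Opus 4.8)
The plan is to analyze the $\theta$-semistable locus of $\mu_\cc^{-1}(\lambda)$ explicitly using Proposition \ref{Konno:stable}. First I would fix the circuit $S$ on whose root hyperplane $K_S$ the sub-regular parameter $\lambda$ lies, together with its splitting $S = S^+ \sqcup S^-$ and the weights $\{w_i\}_{i\in S}$ satisfying $\sum_{i\in S^+} w_i\overline{b}_i - \sum_{i\in S^-} w_i\overline{b}_i = 0$. By Proposition \ref{Konno:stable}, a point $(z,w)\in\mu_\cc^{-1}(\lambda)$ is $\theta$-stable exactly when some $z_i\neq 0$ with $i\in S^+$ or some $w_i\neq 0$ with $i\in S^-$; the complement of this open condition inside $\mu_\cc^{-1}(\lambda)$ — cut out by $z_i = 0$ for all $i\in S^+$ and $w_i = 0$ for all $i\in S^-$, together with the defining equations $\mu_\cc = \lambda$ — is the locus I would identify with the preimage of the desired substack $\cM^S$. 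On this locus the remaining coordinates $(z_i)_{i\in S^-}$ and $(w_i)_{i\in S^+}$ are governed by the single circuit relation, which is what produces the weighted projective structure.

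Next I would carry out the bookkeeping of which torus coordinates remain free. The defining equations $\mu_\cc(z,w) = \lambda$ restricted to the sublocus $\{z_i=0, i\in S^+;\ w_i=0, i\in S^-\}$ decouple into: (i) for indices $i\notin S$, the usual hypertoric equations, which I would show cut out an affine hypertoric stack $\cM_0^S$ of the expected dimension — this is where the reduced stacky hyperplane arrangement on the complement of $S$ enters; and (ii) for indices $i\in S$, a bilinear condition forced by $\lambda\in K_S$ which, after the quotient by the relevant subtorus, becomes a single homogeneity condition on the $|S|$-tuple of surviving coordinates with weights $\mathbf{w}$, giving the fibre $\pp^{|S|-1}_{\mathbf{w}}$. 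Assembling these, I would exhibit $\cM^S$ as the quotient presenting it as a $\pp^{|S|-1}_{\mathbf{w}}$-bundle over $\cM_0^S$, and a dimension count shows $\codim_{\MHL}\cM^S = |S| - 1$.

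Finally, for the assertion that all positive-dimensional projective substacks of $\MHL$ lie in $\cM^S$, I would argue as follows. Any complete positive-dimensional substack maps to the coarse moduli space, which by the hypertoric picture is affine over the base hypertoric variety associated to $\lambda$; properness forces such a substack to be contracted into a fibre of the map collapsing the core, and the sub-regularity of $\lambda$ means this core has exactly one positive-dimensional piece, namely $\cM^S$. Concretely, I would use that the moment map $\mu_\rr$ (or the associated GIT/affinization morphism) is constant on any complete substack, pin down its value, and check the only complete fibre of positive dimension is the $\pp^{|S|-1}_{\mathbf{w}}$ sitting inside $\cM^S$; this reproduces Konno's analysis in \cite{Konno} in the stacky setting, and the only modification needed is to track the orbifold structure, i.e. the torsion/gerbe data, which does not affect the dimension or completeness statements.

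\textbf{Main obstacle.} The delicate step is the second one: correctly identifying the quotient of the sublocus by the subtorus $T^{m-d}$ and seeing that it is literally a $\pp^{|S|-1}_{\mathbf{w}}$-bundle rather than merely something with the same coarse space. This requires carefully matching the subtorus action restricted to the $S$-coordinates against the weights $\mathbf{w}$ coming from the circuit relation, and checking that the stabilizers are exactly the cyclic groups $\mu_{w_i}$ expected for the weighted projective stack; the torsion in $N$ and the possible $\mu$-gerbe structure must be threaded through this computation without altering the stated bundle structure.
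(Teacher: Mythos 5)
There is a genuine error at the very first step: you have the defining equations of $\cM^S$ backwards. You propose to realize $\cM^S$ as (the image of) the locus $\{z_i=0 \text{ for all } i\in S^+,\ w_i=0 \text{ for all } i\in S^-\}\cap\mu_\cc^{-1}(\lambda)$, i.e.\ the complement of the condition in Proposition \ref{Konno:stable}. But by that very proposition this locus is exactly the $\theta$-\emph{unstable} locus of $\mu_\cc^{-1}(\lambda)$, which is deleted when forming $\MHL=[\mu_\cc^{-1}(\lambda)/_{\theta}T^{m-d}]$; it therefore defines the empty substack of $\MHL$ and cannot be the sought $\cM^S$. The correct locus is the opposite one, $\mathcal{P}^S=\{w_i=0:\ i\in S^+;\ z_i=0:\ i\in S^-\}$, so that the surviving circuit coordinates are $z_i$ ($i\in S^+$) and $w_i$ ($i\in S^-$) --- precisely the coordinates appearing in Konno's stability criterion. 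Then $\theta$-stability says these $|S|$ coordinates are not all zero, and quotienting by the one-dimensional subtorus $\cc^\times=\ker\bigl(T^{m-d}\to p(T^{m-d})\bigr)$, which acts on them with weights $\mathbf{w}$ coming from the circuit relation $\sum_{i\in S^+}w_i\overline{b}_i-\sum_{j\in S^-}w_j\overline{b}_j=0$, produces the fibre $\pp^{|S|-1}_{\mathbf{w}}$ of $\eta_S:\cM^S\to\cM_0^S$. With your choice the roles are exchanged: the coordinates you keep, $(z_i)_{i\in S^-}$ and $(w_i)_{i\in S^+}$, are subject to no ``not all zero'' condition, so even formally no projectivization occurs; the weighted projective structure is created by stability, not by the circuit relation alone, and stability lives on $\mathcal{P}^S$, not on its mirror image.

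Apart from this inversion, your outline follows the paper's route: project to the coordinates outside $S$ to obtain the affine base $\cM_0^S$ (affine because $\lambda\in K_S$ forces the residual stability parameter to vanish), count dimensions to get codimension $|S|-1$, and for the last claim use that complete positive-dimensional substacks are contracted by the affinization and hence, for sub-regular $\lambda$, must sit inside $\cM^S$ --- the paper phrases this as: such substacks correspond to $T^{m-d}$-orbits whose closures meet the unstable locus, and these are contained in $\mathcal{P}^S$. So the fix is local but essential: redefine the locus as $\mathcal{P}^S$ and rerun your second step with the stability condition supplying the weighted projectivization; as it stands, the construction of $\cM^S$ and hence of the bundle structure does not go through.
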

\begin{proof}
Let $S\subset \A$ be a circuit. Define the following space:
$$\mathcal{P}^S:=\{w_i=0: i\in S^+; z_i=0: i\in S^-\}\subset \hh.$$
The substack $\mathcal{M}^S$ is the GIT quotient stack
$$\cM^S=[(\mathcal{P}^S\cap \mu_\cc^{-1}(\lambda))/_{\theta}T^{m-d}].$$
Now let 
$$p: \mathfrak{t}_\cc^m\to \cc^{m-|S|}$$
denote the projection to the last $m-|S|$ coordinates. Then the following are true:
Recall that $$\beta_S=\sum_{i\in S^+}w_i e_i-\sum_{j\in S^-}w_j e_j.$$
Then we have:
\begin{enumerate}
\item $\ker_{\mathfrak{t}_\cc^{m-d}}=\cc\cdot \beta_S$;
\item  $(p(\mathfrak{t}_\cc^{m-d}))^*$ is canonically identified with $K_S\subset (\mathfrak{t}_\cc^{m-d})^*$;
\item $\lambda\in (p(\mathfrak{t}_\cc^{m-d}))^*$.
\end{enumerate}
Consider 
$$p|_{T=(\cc^\times)^m}: T\to (\cc^\times)^{m-|S|}$$
and 
$$ \hh\to T^*\cc^{m-|S|}$$
by $(z_i, w_i)\mapsto (z_i, w_i)_{i\notin S}$. 
We have 
$p|_{T}(T^{m-d})$ acts on $T\to (\cc^\times)^{m-|S|}$ with moment map $\mu_\cc|_{m-|S|}$. 
So the hypertoric Deligne-Mumford stack is 
$$\cM_0^S=[\mu_\cc|_{m-|S|}^{-1}(\lambda)/_{\theta}T^{m-d}].$$
Since $\lambda\in K_S$, $\theta$ is zero and the stack is affine.

By Theorem \ref{Konno:stable}, if $(z,w)\in (\mathcal{P}^S\cap \mu_\cc^{-1}(\lambda))$, then 
$p(z,w)\in \mu_\cc|_{m-|S|}^{-1}(\lambda)$.  Then we have a morphism of quotient stacks
$$\eta_S: \cM^S\to \cM_0^{S}.$$
Let $\cc^{|S|}=\{z_i: i\in S^+; w_i: i\in S^-\}$. 
The fibre of $\eta_S$ is isomorphic to the GIT quotient stack 
$[\cc^{|S|}/_{\theta}\cc^\times]$, where $\cc^\times=\ker: T^{m-d}\to p(T^{m-d})$.  Note that the action of $\cc^\times$ on $\cc^{|S|}$ is given by the weights $w_i$.  So it is the weighted projective stack $\pp^{|S|-1}_{\mathbf{w}}$. 
Any positive projective substacks of $\MHL$ corresponds to $T^{m-d}$ orbits in $\hh$ whose closure interests with the unstable locus, and they are contained in $\mathcal{P}^S$. Hence the positive dimensional substacks must be contained in $\cM^S$. 
\end{proof}

\section{Steinberg correspondence}\label{Section:Steinberg:correspondence}

In this section we discuss the stacky version of the  Steinberg correspondence.   

\subsection{General set-up}
From our construction of hypertoric Deligne-Mumford stack in \S \ref{Section:hypertoricDMstack}, one can take the hypertoric Deligne-Mumford stack $X_\A$ as a symplectic resolution of symplectic singularities. We give such a construction. 

Recall that $X_\A$ is a quotient stack 
$[Y/G]$, where $Y\subset \cc^{2m}\setminus V(\mathcal{I}_{\theta})$ is a closed subvariety of $\hh:=T^*\cc^m$, determined by  the ideal (\ref{ideal1}).  Let $\overline{Y}\subset \hh$ be the subvariety determined by the ideal $I_{\beta^{\vee}}$ in (\ref{ideal1}).  From the GIT point of view, $V(\mathcal{I}_{\theta})$ is the unstable locus so that deleting them we have a good GIT quotient. Consider the quotient stack 
$$X_\A^0=[\overline{Y}/G].$$
It is in general  a singular affine stack, and not Deligne-Mumford.  Let $\overline{X}_\A^0$ be the good moduli space of the stack 
$X_\A^0$ in the sense of \cite{Alper}.  
The inclusion $Y\hookrightarrow \overline{Y}$ induces a projective morphism of quotient stacks, which is birational. 
Then we take the natural morphism
$$X_\A\to \overline{X}_\A^0$$
as a ``symplectic resolution".  To simplify notation we denote by $X_0:=X_\A^0, \overline{X}_0=\overline{X}_\A^0$. 
The Steinberg variety is constructed by the fibre product
\begin{equation}\label{Steinberg:correspondence}
X_\A\times_{\overline{X}_0}X_\A.
\end{equation}
Each component $Z$ in (\ref{Steinberg:correspondence}) is of dimension $\leq \dim(X_\A)$ (see \cite{BMO}, \cite{Ginzburg}).  The components of dimension $\dim(X_\A)$ are Lagrangian inside $X_\A\times X_\A$.  
Let $\mathbf{Z}\subset X_\A\times X_\A$ be all the components of Lagrangian cycles. 
The Steinberg correspondence 
\begin{equation}
L:  H^*_{T\times\cc^\times}(X_\A)\to H^*_{T\times\cc^\times}(X_\A)
\end{equation}
for the $\bbT$-equivariant cohomology of $X_\A$ is given by:
$$L(\alpha)=p_{2\star}(\mathbf{Z}\cup p_1^\star\alpha),$$
where $p_i: X_\A\times X_\A\to X_\A$ is the projection for $i=1,2$.

In the smooth variety case, the Lagrangian components acts by correspondence on the Borel-Moore homology of $X_\A$. We hope that this  is still true for Chen-Ruan cohomology.  In this section we prove the case of cotangent bundle of weighted projective stacks, which we use for the calculation of quantum product.
Let 
$$[I\mathbf{Z}]\in H_*(I(X_\A\times X_\A))$$ 
be a cycle. Then  by Poincare duality it defines a cohomology class in 
$H_{\bbT}^*(I(X_\A\times X_\A))$. The Steinberg correspondence in Chen-Ruan cohomology is:
\begin{equation}\label{Steinberg:correspondence:Chen-Ruan}
\mathbf{L}:  H^*_{\CR, T\times\cc^\times}(X_\A)\to H^*_{\CR, T\times\cc^\times}(X_\A)
\end{equation}
for equivariant cohomology of $X_\A$ is given by:
$$\mathbf{L}(\alpha)=\inv^\star Ip_{2\star}([I\mathbf{Z}]\cup Ip_1^\star \alpha),$$
where $Ip_i: I(X_\A\times X_\A)\to IX_\A$ is the projection of inertia stacks  for $i=1,2$.

\subsection{The case of cotangent bundle $T^*{\pp^n_{\mathbf{w}}}$}\label{sec:cotangentbundle:Steinberg}

Let $X_\A=T^*\pp^n_{\mathbf{w}}$ be the cotangent bundle of weighted projective stack.
Then the Lagrangian components of (\ref{Steinberg:correspondence}) consists of two components:
$$X_\A;  \quad \pp^n_{\mathbf{w}}\times \pp^n_{\mathbf{w}}.$$
It is easy to see that they are Lagrangian in $X_\A\times X_\A$.  Since we are dealing with Chen-Ruan orbifold cohomology of $X_\A$,  we actually need to find a correspondence in orbifold version. 
For this purpose we consider the inertia stack $IX_\A$ and $I(\pp^n_{\mathbf{w}}\times \pp^n_{\mathbf{w}})$ inside the inertia stack
$I(X_\A\times X_\A)$.

Recall the the twisted sectors of weighted projective stack $\pp^n_{\mathbf{w}}$ corresponds to finite set 
$F=\{\frac{d}{w_i}| 0\leq d<w_i\}$.  The twisted sectors of the product $\mathbf{Z}:=\pp^n_{\mathbf{w}}\times \pp^n_{\mathbf{w}}$ are also indexed by 
pairs $(\alpha,\beta)$, where $\alpha=e^{\frac{d}{w_i}}$ and $\beta=e^{\frac{d^\prime}{w_j}}$. 
The pair $(\alpha,\alpha^{-1})$ means that 
they are inverse in local group, i.e. if $\alpha=e^{\frac{d}{w_i}}$, then $\alpha^{-1}=e^{\frac{w_i-d}{w_i}}$.
Set 
\begin{equation}
\Gamma=\bigcup_{(\alpha,\alpha^\prime)}\mathbf{Z}_{(\alpha,\alpha^{\prime})}.
\end{equation}
Let $[\mathbf{w}:d]:=\{w_i|d \text{~divides~}w_i\}$. Then $\pp([\mathbf{w}:d])$ is a sub-weighted projective stack of $\pp^n_{\mathbf{w}}$. 
and $\mathbf{Z}_{(\alpha,\alpha^{\prime})}=\pp([\mathbf{w}:d])\times\pp([\mathbf{w}: d^\prime])$. 

\begin{prop}
All of the twisted sectors of  $X_\A$ in $IX_\A$ have orbifold degree $\dim(X_\A)$ and $IX_\A$ gives an endomorphism   for the Chen-Ruan orbifold cohomology of $X_\A$. 

All of the components of $I(\pp^n_{\mathbf{w}}\times \pp^n_{\mathbf{w}})$ has orbifold degree $\dim(X_\A)$.  Furthermore, the cycle $\Gamma$ in the inertia stack defines an endomorphism  for $\bbT$-equivariant Chen-Ruan orbifold cohomology of $X_\A$.
\end{prop}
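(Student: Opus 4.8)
The plan is to reduce both assertions to one structural fact about holomorphic symplectic orbifolds. Recall that $X_\A=T^*\pp^n_{\mathbf w}$ carries a nowhere-vanishing holomorphic symplectic form $\omega$ preserved by every (finite) local automorphism group; for a group element $g$ the form $\omega$ pairs the weight-$\theta$ part of the normal bundle of the twisted sector $(X_\A)_g\subset X_\A$ nondegenerately with its weight-$(-\theta)$ part, so the normal weights occur in pairs $\{\exp(2\pi i\theta),\exp(2\pi i(1-\theta))\}$, and each pair contributes $1$ to $\age_{X_\A}(g)$ and $2$ to $\codim_{\cc}((X_\A)_g,X_\A)$. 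Hence
\[
\age_{X_\A}(g)=\tfrac12\,\codim_{\cc}\bigl((X_\A)_g,\,X_\A\bigr)\qquad\text{for every }g,
\]
and, applied at a $g$-fixed point of the zero section, the same local computation identifies the $g$-fixed part of the cotangent space with the cotangent space of the $g$-fixed base: each twisted sector of $X_\A$ is again a holomorphic symplectic cotangent bundle $(X_\A)_g=T^*\pp([{\mathbf w}:d])$, and each twisted sector of $X_\A\times X_\A$ is a product $T^*\pp([{\mathbf w}:d])\times T^*\pp([{\mathbf w}:d'])$. I would record these facts first.

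For the first assertion I would compute, for each component $(X_\A)_g$ of $IX_\A$, the Chen-Ruan degree of its class in $H^*_{\CR,\bbT}(X_\A\times X_\A)$. Under the diagonal embedding $IX_\A\hookrightarrow I(X_\A\times X_\A)=IX_\A\times IX_\A$ this component maps onto the diagonal of the twisted sector $(X_\A\times X_\A)_{(g,g)}=(X_\A)_g\times(X_\A)_g$, which has complex codimension $\dim_{\cc}(X_\A)_g$ there; together with the age shift $2\,\age_{X_\A\times X_\A}(g,g)=4\,\age_{X_\A}(g)$ this gives Chen-Ruan degree $2\dim_{\cc}(X_\A)_g+4\,\age_{X_\A}(g)$, which by the displayed identity collapses to $2\dim_{\cc}(X_\A)_g+2\,\codim_{\cc}((X_\A)_g,X_\A)=4n=\dim(X_\A)$, independent of $g$. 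Thus every twisted sector of $X_\A$ is a Lagrangian cycle in the orbifold sense and $[IX_\A]$ is a well-defined class of pure degree $\dim(X_\A)$, so the correspondence $\alpha\mapsto\inv^\star Ip_{2\star}\bigl([IX_\A]\cup Ip_1^\star\alpha\bigr)$ makes sense equivariantly: the cup product and $\inv^\star$ are defined on Chen-Ruan cohomology, $Ip_2$ restricted to the diagonal is an isomorphism onto $IX_\A$ (in particular proper, so the pushforward is defined even though $X_\A$ is non-compact), the $\bbT$-fixed locus of $T^*\pp^n_{\mathbf w}$ consists of the finitely many coordinate points of the zero section so the equivariant formalism applies, and since $Ip_{2\star}$ lowers degree by $\dim(X_\A)$ the resulting endomorphism of $H^*_{\CR,\bbT}(X_\A)$ preserves degree. (This part of the argument in fact works for any holomorphic symplectic Deligne-Mumford stack.)

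For the second assertion I would run the same bookkeeping for $\Gamma$. A component of $I(\pp^n_{\mathbf w}\times\pp^n_{\mathbf w})$ is $\pp([{\mathbf w}:d])\times\pp([{\mathbf w}:d'])$, which sits inside its ambient twisted sector $(X_\A)_\alpha\times(X_\A)_\beta=T^*\pp([{\mathbf w}:d])\times T^*\pp([{\mathbf w}:d'])$ as the zero section, hence as a Lagrangian substack of complex codimension $p+q$, where $p=\dim_{\cc}\pp([{\mathbf w}:d])$ and $q=\dim_{\cc}\pp([{\mathbf w}:d'])$. Since $\codim_{\cc}((X_\A)_\alpha,X_\A)=2n-2p$ and $\codim_{\cc}((X_\A)_\beta,X_\A)=2n-2q$, the displayed identity gives age shift $2\,\age_{X_\A\times X_\A}(\alpha,\beta)=(2n-2p)+(2n-2q)$, so the Chen-Ruan degree of this component is $2(p+q)+(4n-2p-2q)=4n=\dim(X_\A)$. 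Hence every component of $I(\pp^n_{\mathbf w}\times\pp^n_{\mathbf w})$, in particular every component of $\Gamma$, has pure degree $\dim(X_\A)$, so $[\Gamma]\in H^{\dim(X_\A)}_{\CR,\bbT}(X_\A\times X_\A)$ is well defined, and the discussion of the preceding paragraph applies verbatim, the only change being that properness of $Ip_2|_{I\Gamma}$ now follows from compactness of $\Gamma$ (a finite union of products of weighted projective stacks). I expect the only genuine content to be the symplectic age identity together with the care needed to combine, within each twisted sector, the codimension of the cycle inside that sector with the age shift of the sector itself; the non-compactness of $X_\A$ is the remaining, routine, obstacle, handled by finiteness of the $\bbT$-fixed locus and compactness of $\Gamma$.
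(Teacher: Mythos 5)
Your proposal is correct and follows essentially the same route as the paper: identify each twisted sector (of $IX_\A$ and of $I(\pp^n_{\mathbf w}\times\pp^n_{\mathbf w})$) inside its ambient sector of $I(X_\A\times X_\A)$ and add codimension to age shift, the paper doing the age count explicitly via $\age(v)=\overline A_d=n+1-A_d$ where you invoke the equivalent symplectic identity $\age=\tfrac12\codim_{\cc}$, and then conclude the correspondence is well defined (the paper cites \cite{BMO}, \cite{Ginzburg} where you supply the properness-on-support argument explicitly, a harmless strengthening). Only note the convention mismatch: you work with real Chen-Ruan degree and real dimension ($4n$), while the paper's ``orbifold degree'' is complex codimension plus age, equal to $\dim_{\cc}(X_\A)=2n$; the two statements agree up to this factor of two.
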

\begin{proof}
We first do the case $IX_\A$.
Note that all the components of $IX_\A$ are cotangent bundle of $T^*\pp([\mathbf{w}:d])$, where $d$ is a positive integer and $v=\frac{d}{w_i}$ is the local group element determining the twisted sector. The age of this element 
$$\age(v)=\overline{A}_d,$$
where $A_d$ denote the number of $\{w_i\}$'s such that $d|w_i$, and $\overline{A}_d$ is $n+1-A_d$. 
Note that $T^*\pp([\mathbf{w}:d])\subset (X_\A\times X_\A)_{(v,v)}=T^*\pp([\mathbf{w}:d])\times T^*\pp([\mathbf{w}:d])$ as a diagonal. 
Then the orbifold degree of $(X_\A)_{v}$ inside $I(X_\A\times X_\A)$ is:
$$2(A_d-1)+\overline{A}_d+\overline{A}_d=n+1+n+1-2=2n=\dim(X_\A).$$
Note that such a component in the correspondence:
$$
\xymatrix{
& X_\A\subset X_\A\times X_\A \ar[dr]^{p_1} \ar[dl]_{p_2} & \\ 
X_\A \ar@{}[rr]^{} & & X_\A
}
$$ 
always gives identity. So $X_\A$ gives an endomorphism  for the $\bbT$-equivariant Chen-Ruan cohomology. 

For the other component $\mathbf{Z}$, 
\begin{equation} 
\label{eq:Steinberg:correspondence} 
\xymatrix{
& \mathbf{Z}\subset X_\A\times X_\A \ar[dr]^{p_1} \ar[dl]_{p_2} & \\ 
X_\A \ar@{}[rr]^{} & & X_\A.
}
\end{equation} 
We prove that each component in $I\mathbf{Z}$ has orbifold degree of dimension of $X_\A$. 
A twisted sector of $\mathbf{Z}$ corresponds to a pair $(v_1, v_2)$ where 
$v_1=\frac{d_1}{w_i}$, $v_2=\frac{d_2}{w_j}$. 
And 
$$\mathbf{Z}_{(v_1,v_2)}=\pp([\mathbf{w}: d_1])\times\pp([\mathbf{w}: d_2])\subset (X_\A)_{v_1}\times (X_{\A})_{v_2}
=T^*\pp([\mathbf{w}: d_1])\times T^*\pp([\mathbf{w}: d_2]).$$
So the orbifold degree of $\mathbf{Z}_{(v_1,v_2)}$ is:
\begin{align*}
A_{d_1}-1+A_{d_2}-1+\age(v_1)+\age(v_2)&=A_{d_1}-1+A_{d_2}-1+\overline{A}_{d_1}+\overline{A}_{d_2}\\
&=n+1-1+n+1-1\\
&=2n.
\end{align*}
We may call all the components of $I\mathbf{Z}$ Lagrangian cycle in the correspondence of Chen-Ruan cohomology. 
Now each component in $\Gamma$ corresponds to 
\begin{equation} 
\label{eq:Steinberg:Gamma} 
\xymatrix{
& \mathbf{Z}_{(\alpha,\alpha^{\prime})}\subset (X_\A)_{\alpha}\times (X_\A)_{\alpha^{\prime}} \ar[dr]^{p_1} \ar[dl]_{p_2} & \\ 
(X_\A)_{\alpha} \ar@{}[rr]^{} & & (X_\A)_{\alpha^{\prime}}.
}
\end{equation} 
The above diagram actually corresponds to lower dimensional cotangent bundle of weighted projective stacks, hence similar argument as in \cite{BMO}, \cite{Ginzburg} proves that $\mathbf{Z}_{(\alpha,\alpha^{\prime})}$ gives an endomorphism  on 
$\bbT$-equivariant cohomology.  Chen-Ruan cohomology is a sum of all of the twisted sectors like this, hence $\Gamma$ gives an endomorphism  on $\bbT$-equivariant Chen-Ruan cohomology. 
\end{proof}

\begin{example}
We compute an example $X_\A=T^*\pp(1,2)$. 
In this example the components of $I\mathbf{Z}=I(\pp(1,2)\times\pp(1,2))$ are given by:
$$
\begin{array}{l}
\mathbf{Z}_{(0,0)}=\mathbf{Z}; \quad
\mathbf{Z}_{(0,1/2)}=\pp(1,2)\times B\mu_2;\\
\mathbf{Z}_{(1/2,0)}=B\mu_2\times \pp(1,2);\quad
\mathbf{Z}_{(1/2,1/2)}=B\mu_2\times B\mu_2.
\end{array}
$$
It is not hard that we can check that $\Gamma$ gives an endomorphism  for the Chen-Ruan cohomology of $X_\A$. 

We compute that using $I\mathbf{Z}$. Consider
$$
\xymatrix{
& \mathbf{Z}_{(\alpha,\beta)}\subset (X_\A)_{\alpha}\times (X_\A)_{\beta} \ar[dr]^{p_1} \ar[dl]_{p_2} & \\ 
(X_\A)_{\alpha} \ar@{}[rr]^{} & & (X_\A)_{\beta}.
}
$$
We apply the above diagram to every component in $I\mathbf{Z}$. The equivariant Chen-Ruan cohomology of $X_\A$ is:
$$H^*_{\CR,\bbT}(X_\A)=\frac{\qq[u_1, u_2, \hbar, \mathds{1}_{\frac{1}{2}}]}{\{u_1\cdot u_2, \mathds{1}_{\frac{1}{2}}u_1, \mathds{1}_{\frac{1}{2}} u_2, \mathds{1}_{\frac{1}{2}}\cdot \mathds{1}_{\frac{1}{2}}=u_1^2\}}.$$
We calculate:
\begin{align*}
\mathbf{L}(u_1)=\frac{1}{2}(\hbar-u_1-u_2)+\frac{1}{2}\mathds{1}_{\frac{1}{2}};\\
\mathbf{L}(u_2)=(\hbar-u_1-u_2)+\frac{1}{2}\mathds{1}_{\frac{1}{2}};\\
\mathbf{L}(\mathds{1}_{\frac{1}{2}})=\frac{1}{2}(\hbar-u_1-u_2)+\frac{1}{2}\mathds{1}_{\frac{1}{2}}.
\end{align*}
We calculate $\mathbf{L}^{-1}$, where 
$$\mathbf{L}^{-1}:=(Ip_1)_\star([I\mathbf{Z}]\cup Ip_2^\star(-)).$$
To do that we use localization. There are two $T\times\cc^\times$ fixed points on $X_\A$, which is 
$P_1=[1,0]\in \pp(1,2), P_2=[0,1]\in \pp(1,2)$.  We calculate:
$$e(N_{P_1}X_\A)=\lambda_1(\hbar-\lambda_1-\lambda_2);\quad e(N_{P_2}X_\A)=\lambda_2(\hbar-\lambda_1-\lambda_2).$$
We first calculate:
\begin{align*}
\int_{X_\A}(\hbar-u_1-u_2)^2&=\frac{1}{2}\left(\frac{(\hbar-\lambda_1-\lambda_2)^2}{\lambda_1(\hbar-\lambda_1-\lambda_2)}+ 
\frac{(\hbar-\lambda_1-\lambda_2)^2}{\lambda_2(\hbar-\lambda_1-\lambda_2)}\right)\\
&=\frac{1}{2}\left(\frac{(\hbar-\lambda_1-\lambda_2)}{\lambda_1}+ 
\frac{(\hbar-\lambda_1-\lambda_2)}{\lambda_2}\right)\\
&=\frac{1}{2}\left(\frac{(\hbar-\lambda_2)}{\lambda_1}+ 
\frac{(\hbar-\lambda_1)}{\lambda_2}-2\right).
\end{align*}
Hence we have:
\begin{align*}
\mathbf{L}^{-1}(\hbar-u_1-u_2)=\frac{1}{2}\left(\frac{(\hbar-\lambda_2)}{\lambda_1}+ 
\frac{(\hbar-\lambda_1)}{\lambda_2}-2\right)(\hbar-u_1-u_2)\\
+\frac{1}{2}\left(\frac{(\hbar-\lambda_2)}{\lambda_1}+ 
\frac{(\hbar-\lambda_1)}{\lambda_2}-2\right)\mathds{1}_{\frac{1}{2}};\\
\mathbf{L}^{-1}(\mathds{1}_{\frac{1}{2}})=\frac{1}{2}(\hbar-u_1-u_2)+\frac{1}{2}\mathds{1}_{\frac{1}{2}}.
\end{align*}
So
\begin{align*}
\mathbf{L}^{-1}(\mathbf{L}(u_1))=\frac{1}{4}\left(\frac{(\hbar-\lambda_2)}{\lambda_1}+ 
\frac{(\hbar-\lambda_1)}{\lambda_2}-2\right)(\hbar-u_1-u_2)\\
+\frac{1}{4}\left(\frac{(\hbar-\lambda_2)}{\lambda_1}+ 
\frac{(\hbar-\lambda_1)}{\lambda_2}-2\right)\mathds{1}_{\frac{1}{2}}
+\frac{1}{4}(\hbar-u_1-u_2)+\frac{1}{4}\mathds{1}_{\frac{1}{2}}.
\end{align*}
It is seen that $\mathbf{L}^{-1}\mathbf{L}$ is not an identity. 
But $\mathbf{L}: H^*_{\CR, \bbT}(X_\A)\to H^*_{\CR, \bbT}(X_\A)$ is injective. 
\end{example}

\subsection{The case of deformation $\MHL$}

Let $\MHL$ be the deformation of $X_\A$ by sub-regular parameter $\lambda$.  We fix a notation that 
$\mathfrak{M}_{\lambda}:=\MHL$.  Then there is a contraction map from 
$\MM_{\lambda}\to \MM_{\lambda}^0$ such that 
$$\MM_{\lambda}\times_{\MM_{\lambda}^0}\MM_{\lambda}\subset \MM_{\lambda}\times \MM_{\lambda}$$
contains components whose dimension are the same as $\dim(\MHL)$. 

Let $(v,\sigma)\in\mbox{Box}$ be an element in the box.  
Then $(X_\A)_{(v,\sigma)}:=X_{\A/\sigma}$ is again a hypertoric Deligne-Mumford stack, associated to the quotient 
stacky hyperplane arrangement $\A/\sigma=(N(\sigma), \beta(\sigma), \theta(\sigma))$. 
Then the twisted sector $(\MM_{\lambda})_{(v,\sigma)}$ of the deformation $\MHL$, associated to 
$(v,\sigma)$,  is also a hypertoric Delinge-Mumford stack, which is the deformation of $X_{\A/\sigma}$. 
Let $\mathbf{Z}:=\MM_{\lambda}\times_{\MM_{\lambda}^0}\MM_{\lambda}$. Then similar to (\ref{Steinberg:correspondence:Chen-Ruan}), the inertia stack $I\mathbf{Z}$ gives the Steinberg correspondence:
\begin{equation}\label{Steinberg:correspondence:Chen-Ruan:deformation}
\mathbf{L}:=L_{I\mathbf{Z}}: H_{\CR,\bbT}^*(\MM_{\lambda})\to H_{\CR,\bbT}^*(\MM_{\lambda}).
\end{equation}

\section{Quantum product by divisors}\label{Section:proof:main:result}

In this section we calculate the small equivariant quantum product by divisors for hypertoric Deligne-Mumford stacks, which proves 
Theorem \ref{Quantum:by:a:divisor}. The quantum product by divisors is reduced to the calculation of 2-point Gromov-Witten invariants.   The computation of quantum product by divisors for smooth hypertoric varieties in \cite{MS} uses a result stated as \cite[Proposition 2.2]{MS}, which was proven in \cite{BMO} in general setting of symplectic resolutions. We prove a similar result  for hypertoric Deligne-Mumford stacks. The arguments here should work for more general {\em stacky symplectic resolutions}.  


\subsection{Reduced virtual fundamental cycle on the deformation} Let $X_\A$ be a hypertoric Deligne-Mumford stack associated with a stacky hyperplane arrangement $\A$.  As in \S \ref{deformation:hypertoricDM:stack}, deformation of $X_\A$ is obtained by varying the level of moment map parameter $\lambda\in (\mathfrak{t}_\cc^{m-d})^*$.  Our algebraic construction $X_\A$ of hypertoric Deligne-Mumford stack corresponds to $\lambda=0$.  Each circuit $S\subset \A$ gives a {\em root hyperplane}
$K_S=\mbox{span}(\iota^*e_i^\vee: i\notin S)\subset (\mathfrak{t}_\cc^{m-d})^*$. By divisor equation, we are interested in the equivariant virtual fundamental cycle 
$[\cM_{0,2}(X_\A, d)]^{\virt}$.  By (\ref{reduced:virtual:class:virtual:class}) we have:
$$[\cM_{0,2}(X_\A, d)]^{\virt}=\hbar\cdot [\cM_{0,2}(X_\A, d)]^{\red}.$$
Let $\phi_0: \aaa^1\to (\mathfrak{t}_\cc^{m-d})^*$ be a generic linear subspace such that it intersects every hyperplane transversely exactly  once at the origin.  The deformation of $X_\A$ over $(\mathfrak{t}_\cc^{m-d})^*$ in \S \ref{deformation:hypertoricDM:stack} restricts to give a smooth map $\mathcal{V}_0\to \aaa^1$ whose fibre over the origin is 
$X_\A$.  The reduced virtual fundamental cycle in \S \ref{reduced:virtual:cycle} has another explanation which was developed in \cite[Proposition 4.1]{BMO}. We stated it as:
$$[\cM_{0,2}(X_\A, d)]^{\red}=[\cM_{0,2}(\mathcal{V}_0, d)]^{\virt}.$$

We deform $\mathcal{V}_0\to \aaa^1$ again. Put $D:=\aaa^1$. Choose a family of maps parametrized by $t\in D$, 
$\phi_{t}: \aaa^1\to (\mathfrak{t}_\cc^{m-d})^*$, such that for $t=0$ we have the $\phi_0$ above, and for $t\neq 0$ sufficiently close to $0$, the image of $\phi_t$ intersects each hyperplane transversely at distinct points. 
Write $\mathcal{V}_t\to \aaa^1$ for the family obtained by restriction to $\phi_t$.  The deformation invariance implies that 
$$[\cM_{0,2}(\mathcal{V}_0, d)]^{\virt}=[\cM_{0,2}(\mathcal{V}_t, d)]^{\virt}.$$
By construction, compact curves can only be in the fibres of $\mathcal{V}_t\to \aaa^1$ over points of intersects with root hyperplanes. We conclude that only multiples of $\beta_S$ appears in the quantum corrections. To study the quantum correction from class $m\beta_S$, we study $\MHL$ where $\lambda$ is subregular (i.e., lying on $K_S$ and only $K_S$). 
Proposition \ref{deformation:hypertoric:subregular} implies that compact curves lie in the fibres of $\mathcal{M}^{S}\to \mathcal{M}_0^{S}$.  Since $\mathcal{M}^{S}$ is symplectic and the fibres are isotropic, its normal bundle is identified with the cotangent bundle.  So this reduces the calculation of equivariant Gromov-Witten invariants to the case of cotangent bundle $T^*\pp^{n}_{\mathbf{w}}$ of  $\pp^{n}_{\mathbf{w}}$.

\subsection{Broken and unbroken twisted maps}\label{broken:unbroken}

We list some properties of broken and unbroken maps as in \cite[Section 3.8.2]{OP},  see also \cite[Section 7.3]{MO}.  Consider 
the moduli stack $\cM_{0,k}(X_\A, d)$ for $d>0$ with the torus $\mathbb{T}:=T\times\cc^\times$-action.  In each $\bbT$-fixed  twisted stable map 
$f: \mathcal{C}\to X_\A$ in $\cM_{0,k}(X_\A, d)$, the twisted curve $\mathcal{C}$ is a chain of twisted rational curves
$$\cC_1\cup\cdots\cup\cC_k$$ with $k$ markings. 
Let $p\in \cC$ be a marking, the $T\times\cc^\times$ weight $\omega_p$ at $p$ is defined by the 
$T\times\cc^\times$-representation of the tangent space to $\cC$ at $p$.  Similarly if $\mathcal{P}\subset \cC$ is a component 
incident to a twisted node $s$. A   $T\times\cc^\times$ weight $\omega_{\mathcal{P},s}$ at $s$ is defined by the 
$T\times\cc^\times$-representation of the tangent space to $\mathcal{P}$ at $s$.

If at a node $s$, the $\bbT$ weights of the two branches are opposite and nonzero then we say that $f$ is an {\em unbroken chain}.  Moreover we say that $f$ is an {\em unbroken twisted map} if it satisfies one of the following three conditions:
\begin{enumerate}
\item $f$ comes from a twisted map $f:  \cC\to X_\A^{\mathbb{T}}$,
\item $f$ is an unbroken chain,
\item the curve $\cC$ is a chain of twisted rational curves
$$\cC_0\cup\cC_1\cup\cdots\cup\cC_k$$
such that $\cC_0$ is contracted by $f$, the marked points lie in $\cC_0$ and the remaining of $\cC$ forms a unbroken chain. 
\end{enumerate}
The $\bbT$-fixed twisted stable maps which do not satisfy the above conditions are called {\em broken twisted maps}.

\subsection{Calculation on $T^*\pp^{n}_{\mathbf{w}}$}\label{section:case:weighted:projective:stack}

The cotangent bundle $X:=T^*\pp^{n}_{\mathbf{w}}$ is a hypertoric Deligne-Mumford stack.  
Recall that in \S \ref{Section:Steinberg:correspondence} the components of $I\mathbf{Z}\subset IX\times IX$ define the Steinberg correspondence (\ref{Steinberg:correspondence:Chen-Ruan}) for equivariant Chen-Ruan cohomology 
$H^*_{\CR, \bbT}(X)$.
If $\{\gamma_i\}$ is a basic for the Chen-Ruan cohomology  $H^*_{\CR, \bbT}(X)$, we let 
$\{\gamma^i\}$ the dual basis under orbifold Poincar\'e pairing. 
Let $D$ be a divisor class in 
$H_{\bbT}^{2}(X,\qq)$.  
In this case all curves lie in the weighted projective stack, hence 
$\NE(X)_{\zz}=\zz_{\geq 0}$. 
Let $\ell\subset \NE(X)_{\zz}$ be the primitive line. 
Recall that 
the quantum product is defined by:
$$( D\star \gamma_i, \gamma_j)=\sum_{d\ell\geq 0}Q^{d\ell}\langle D, \gamma_i, \gamma_j\rangle^{X}_{0,3,d}
= \sum_{d\ell\geq 0} (\int_{d\ell}D) Q^{d\ell}\langle \gamma_i, \gamma_j\rangle^{X}_{0,2,d}$$
Let 
$$\mathbf{ev}:=\ev_1\times \ev_2: \cM_{0,2}(X,d)\to IX\times IX$$
be the evaluation map. Then using the relationship:
$$[\cM_{0,2}(X, d)]^{\virt}=\hbar\cdot [\cM_{0,2}(X, d)]^{\red}$$ for $d\neq 0$, 
we can write the quantum product by $D$ as:
\begin{align}\label{quantum:D}
D\star \gamma_i&=D\cup_{\CR} \gamma_i+
 \sum_{d\ell> 0}\int_{d\ell}D\cdot Q^{d\ell}\langle \gamma_i, \gamma_j\rangle^{X}_{0,2,d}\gamma^{j}\\
 &=D\cup_{\CR} \gamma_i+
 \sum_{d\ell> 0}\int_{d\ell}D\cdot Q^{d\ell}\cdot \hbar \cdot \inv^*\cdot Ip_{2\star}(Ip_1^\star(\gamma_i)\cap \mathbf{ev}_{\star}[\cM_{0,2}(X, d)]^{\red}). \nonumber
\end{align}
We process  (\ref{quantum:D}) by localization of the $\bbT$-action on the moduli stack. 

Recall that in \S \ref{sec:cotangentbundle:Steinberg},   the twisted sectors of the weighted projective stack 
$\pp^n_{\mathbf{w}}$ are indexed by the set 
$$F:=\{\frac{a}{w_i}| 0\leq a< w_i, 0\leq i\leq n\},$$
and the twisted sector corresponding to $f=\frac{a}{w_i}$ is 
the sub-weighted projective stack $\pp([\mathbf{w}:d])$, where $d=|e^{2\pi i f}|$ is the order of $e^{2\pi i f}$. 
The components of the inertia stack $IX$ of the cotangent bundle $X=T^*\pp^n_{\mathbf{w}}$ are also indexed by the set 
$F$, and the twisted sector $X_{f}$ corresponding to $f=\frac{a}{w_i}$ is 
the cotangent bundle $T^*\pp([\mathbf{w}:d])$ of the sub-weighted projective stack $\pp([\mathbf{w}:d])$.

First we have the following fact for the pushforward of virtual fundamental cycle:
\begin{lem}\label{support:lemma}
$$\mathbf{ev}_{\star}[\cM_{0,2}(X, d)]^{\red}\in H^{2n}_{\CR, \bbT}(IX\times IX),$$
where $2n=\dim(X)$. 
\end{lem}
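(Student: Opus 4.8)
The plan is a dimension count, done on each component of $IX\times IX$ separately; the point is that after replacing $[\cM_{0,2}(X,d)]^{\virt}$ by the reduced class and pushing forward, every piece of the resulting cycle is ``middle--dimensional'' in the Chen--Ruan sense, i.e.\ of Chen--Ruan degree exactly $\dim X=2n$. First I would fix twisted sectors $X_{f_1},X_{f_2}$ of $X=T^*\pp^n_{\mathbf{w}}$ and consider the open and closed substack $\cM_{0,2}(X,d)^{f_1,f_2}\subset\cM_{0,2}(X,d)$ on which $\ev_1$ (resp.\ $\ev_2$) lands in $X_{f_1}$ (resp.\ $X_{f_2}$). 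Since $X$ carries a nowhere-vanishing holomorphic symplectic form, $K_X$ is trivial, hence $K_X\cdot d=0$, and the virtual dimension formula (\ref{virtual:dimension}) with $n=2$ marked points gives virtual dimension $\dim X-1-\age(X_{f_1})-\age(X_{f_2})$ for $\cM_{0,2}(X,d)^{f_1,f_2}$. By (\ref{reduced:virtual:class:virtual:class}), $[\cM_{0,2}(X,d)]^{\virt}=\hbar\cdot[\cM_{0,2}(X,d)]^{\red}$, and as $\hbar$ has cohomological degree $2$ the reduced class has virtual dimension one larger on this component, namely $\dim X-\age(X_{f_1})-\age(X_{f_2})$.

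Next I would push forward and convert to a Chen--Ruan degree. The restriction of $\mathbf{ev}=\ev_1\times\ev_2$ to $\cM_{0,2}(X,d)^{f_1,f_2}$ factors through $X_{f_1}\times X_{f_2}$, a smooth Deligne--Mumford stack of dimension $\dim X_{f_1}+\dim X_{f_2}$, and (equivariant, proper) pushforward preserves dimension, so $\mathbf{ev}_\star[\cM_{0,2}(X,d)]^{\red}$ restricts there to a class of dimension $\dim X-\age(X_{f_1})-\age(X_{f_2})$. Its Poincar\'e dual has degree $\dim X_{f_1}+\dim X_{f_2}-\dim X+\age(X_{f_1})+\age(X_{f_2})$ in $H^*_\bbT(X_{f_1}\times X_{f_2})$, and this summand sits in $H^*_{\CR,\bbT}(X\times X)$ with the Chen--Ruan shift by $\age(X_{f_1})+\age(X_{f_2})$, so our class lies in Chen--Ruan degree
\[
\big(\dim X_{f_1}+2\,\age(X_{f_1})\big)+\big(\dim X_{f_2}+2\,\age(X_{f_2})\big)-\dim X .
\]
The key input is the identity $\dim X_f+2\,\age(X_f)=\dim X$ for every twisted sector of the holomorphic symplectic stack $X$: for $X=T^*\pp^n_{\mathbf{w}}$ this is precisely the equalities $\dim X_f=2(A_d-1)$ and $\age(X_f)=\overline{A}_d$ recorded in \S\ref{sec:cotangentbundle:Steinberg} (conceptually, the holomorphic symplectic form pairs the $\chi$- and $\chi^{-1}$-eigenspaces of the normal bundle of $X_f$, forcing $\age(X_f)=\tfrac{1}{2}\codim_\cc X_f$). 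Substituting, the displayed quantity equals $\dim X+\dim X-\dim X=\dim X=2n$, independently of $(f_1,f_2)$; since $IX\times IX=\coprod X_{f_1}\times X_{f_2}$, this is the assertion.

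The computation is otherwise routine; the two places that need care are the ``$+1$'' dimension shift in passing to the reduced virtual class --- which is exactly what promotes a Gromov--Witten cycle of naively positive codimension to an orbifold middle-dimensional one --- and the well-definedness of $\mathbf{ev}_\star$, since $X$ is non-compact. For the latter I would invoke the cosection-localized description of $[\cM_{0,2}(X,d)]^{\red}$ from \S\ref{reduced:virtual:cycle}: this class is supported on twisted stable maps whose image lies in the zero section $\pp^n_{\mathbf{w}}$, and the moduli of such maps is proper, so $\mathbf{ev}$ is proper on the support of the class and the above bookkeeping is legitimate $\bbT$-equivariantly.
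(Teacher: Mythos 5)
Your proposal is correct and follows essentially the same route as the paper's own proof: a sector-by-sector dimension count using the virtual dimension formula with $K_X\cdot d=0$, the one-dimensional shift coming from $[\cM_{0,2}(X,d)]^{\virt}=\hbar\,[\cM_{0,2}(X,d)]^{\red}$, and the relation $\dim X_f+2\,\age(X_f)=\dim X$, which is exactly the paper's computation with $A_d$ and $\overline{A}_d$ phrased conceptually. Your extra remark justifying properness of $\mathbf{ev}$ on the support of the reduced class is a sensible addition that the paper leaves implicit.
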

\begin{proof}
Let $\cM_{0,2}(X, d)^{f_1, f_2}$ be the component of the moduli stack indexed by the elements 
$f_1, f_2\in F$ such that the evaluation maps 
$\ev_1, \ev_2$ have images in $X_{f_1}$ and $X_{f_2}$, respectively. 
The cycle $[\cM_{0,2}(X, d)^{f_1,f_2}]^{\red}$ has dimension 
\begin{align*}
\dim([\cM_{0,2}(X, d)^{f_1,f_2}]^{\red})&=\dim(X)-\age(X_{f_1})-\age(X_{f_2})\\
&=n-\age(X_{f_1})+n-\age(X_{f_2})\\
&=\dim(\pp([\mathbf{w}: d_1]))+\dim(\pp([\mathbf{w}: d_2])),
\end{align*}
where $d_1, d_2$ are the orders of $e^{2\pi i f_1}, e^{2\pi i f_2}$. 
Note that 
$$\mathbf{ev}_{\star}[\cM_{0,2}(X, d)]^{\red}\in H^{2n-\tiny\age(X_{f_1})-\tiny\age(X_{f_2})}_{\bbT}(X_{f_1}\times X_{f_2}).$$ 
Hence 
if $f_1, f_2$ belong to the same local group and have the same order, then $X_{f_1}\cong X_{f_2}$ and  $\mathbf{ev}_{\star}[\cM_{0,2}(X, d)^{f_1,f_2}]^{\red}$ supports on the diagonal $X_{f_1}\times X_{f_2}$ or 
$\pp([\mathbf{w}: d_1]))\times\pp([\mathbf{w}: d_2])$.  If $f_1, f_2$ do not have the same orders, then 
 $\mathbf{ev}_{\star}[\cM_{0,2}(X, d)^{f_1,f_2}]^{\red}$ supports on 
$\pp([\mathbf{w}: d_1]))\times\pp([\mathbf{w}: d_2])$.
\end{proof}

Lemma \ref{support:lemma} tells us that the pushforward of reduced cycle supports on the Lagrangian cycles inside 
$IX\times IX$.  Next we use localization of reduced virtual fundamental cycles as in \cite{GP}, \cite{MO} to calculate the sign of the support. 
We first have the following lemma:
\begin{lem}\label{preliminary:lemma}
Let $\pp^{1}_{s_1,s_2}$ be a $\pp^1$-orbifold with stacky points 
$P_1=[1,0]=B\mu_{s_1}$ and $P_2=[0,1]=B\mu_{s_2}$.  Assume that $\mathcal{V}$ is a vector bundle on 
$\pp^{1}_{s_1,s_2}$.  Let $T$ be a torus acting on $\mathcal{V}$ with no zero weights. Then we have the equivariant Euler class
$$e_{T}(H^0(\mathcal{V}\oplus\mathcal{V}^*-H^1(\mathcal{V}\oplus\mathcal{V}^*))=(-1)^{\chi(\mathcal{V})+\rk(H^1(\mathcal{V}\oplus\mathcal{V}^*)^{T}}e_{T}(\mathcal{V}_{P_1})^{\inv}\cdot e_{T}(\mathcal{V}_{P_2})^{\inv},$$
where $(\mathcal{V}_{P_i})^{\inv}$ is the invariant part of the restriction of $\mathcal{V}$ to $P_i$. 
\end{lem}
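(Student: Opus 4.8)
The plan is to reduce to the case of a line bundle and then compute the cohomology of a line bundle on the orbifold $\pp^1$ explicitly as a $T$-module; this is the orbifold counterpart of the standard input to torus localization used in \cite{GP}, \cite{MO}.

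First I would note that both sides of the asserted identity are multiplicative under $T$-equivariant direct sums: the equivariant Euler class of a virtual representation converts direct sums into products, and $\chi(\,\cdot\,)$, the fibrewise invariant parts $(\,\cdot_{P_i})^{\inv}$, and the spaces $H^{1}(\,\cdot\oplus\,\cdot^{*})^{T}$ (hence their ranks) are all additive on direct sums. Since every vector bundle on the football $\pp^1_{s_1,s_2}$ splits $T$-equivariantly as a direct sum of $T$-equivariant line bundles — the genus-$0$ orbifold Birkhoff--Grothendieck theorem, in the toric form used throughout the toric/orbifold localization literature (cf.\ \cite{CCIT}) — this reduces the lemma to the case $\mathcal{V}=L$, a $T$-equivariant line bundle.

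For a line bundle $L$ I would then write $H^{0}(L\oplus L^{*})$ and $H^{1}(L\oplus L^{*})$ out explicitly as $T$-modules. Writing $c_{1},c_{2}$ for the $T$-weights of the fibres $L_{P_{1}}$, $L_{P_{2}}$, the module $H^{0}(L)$ has a monomial basis whose $T$-weights form an arithmetic (possibly constant) chain interpolating between $c_{1}$ and $c_{2}$, with the endpoint weight $c_{i}$ appearing precisely when the $\mu_{s_{i}}$-character on $L_{P_{i}}$ is trivial, i.e.\ precisely when $(L_{P_{i}})^{\inv}\neq 0$; and $H^{1}(L)$ is obtained dually from Serre duality $H^{1}(L)\cong H^{0}(L^{\vee}\otimes\omega)^{\vee}$ on $\pp^1_{s_1,s_2}$, using that $\omega$ is $\oO(-P_{1}-P_{2})$ twisted by the conormal characters at the two stacky points (these being nontrivial $\mu_{s_i}$-characters when $s_{i}>1$). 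This is where — and the only place where — the orbifold structure enters, and it does so exactly through the $\mu_{s_i}$-invariance of the endpoint weights. With these descriptions in hand, the core of the argument is a cancellation in the virtual module $\chi(L\oplus L^{*})=[H^{0}(L)]-[H^{1}(L)]+[H^{0}(L^{*})]-[H^{1}(L^{*})]$: every ``interior'' weight of the chain attached to $L$ is cancelled, with opposite sign coming from the Serre-dual description, against an interior weight attached to $L^{*}$, so that after passing to equivariant Euler classes of the moving parts — remembering that $H^{1}$ contributes with a minus sign and that $e_{T}$ of a rank-$r$ dual representation differs by $(-1)^{r}$ — all interior factors become $\pm 1$ and only the endpoint contributions remain, giving $\pm\, e_{T}\big((L_{P_{1}})^{\inv}\big)\cdot e_{T}\big((L_{P_{2}})^{\inv}\big)$, with the convention that a missing endpoint contributes the factor $1=e_{T}(0)$ consistent with $(L_{P_{i}})^{\inv}=0$ there.

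It then remains to pin down the overall sign, which I would do by a short case analysis on the length of the chain — equivalently on $\deg L$, the ranges $\deg L\le 0$, the intermediate range where $h^{0}(L)=h^{1}(L)=0$, and $\deg L$ large behaving a little differently — checking in each case that the sign equals $(-1)^{\chi(L)+\rk(H^{1}(L\oplus L^{*})^{T})}$: the exponent $\chi(L)$ records the parity of the one-sided count $h^{0}(L)-h^{1}(L)$, while $\rk(H^{1}(L\oplus L^{*})^{T})$ corrects for the zero-weight subspaces discarded when passing to moving parts. The general case then follows from the first step. The hard part will be running the stacky bookkeeping and the sign bookkeeping at the same time — determining in each degree range and at each of $P_{1},P_{2}$ whether the fibre weight is $\mu_{s_i}$-invariant and hence whether it survives, and matching the cumulative sign against $(-1)^{\chi(\mathcal{V})+\rk(H^{1}(\mathcal{V}\oplus\mathcal{V}^{*})^{T})}$; once the line-bundle case is settled, the reduction in the first step makes the general statement formal.
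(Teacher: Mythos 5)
Your strategy coincides with the paper's at the decisive first step: the paper also reduces to line bundles, invoking the result of Martens--Thaddeus \cite{MT} that a vector bundle on the football splits equivariantly as a sum of line bundles, which is the same content as the orbifold Birkhoff--Grothendieck splitting you appeal to, and your observation that both sides of the identity are multiplicative under equivariant direct sums is exactly what makes this reduction legitimate. Where you diverge is in the second step: the paper simply quotes the line-bundle formula from \cite[Example 98]{MLiu}, whereas you propose to re-derive it by writing out $H^{0}$ and $H^{1}$ of $L\oplus L^{*}$ as $T$-modules, using Serre duality on $\pp^1_{s_1,s_2}$ (with the canonical bundle twisted by the conormal characters at the stacky points), cancelling the interior weights of the chains against their Serre-dual partners, and observing that the endpoint weights survive precisely when the fibre character is $\mu_{s_i}$-invariant, i.e.\ precisely when $(\mathcal{V}_{P_i})^{\inv}\neq 0$. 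This is the standard derivation of the cited formula and it does work, so your route buys self-containedness at the cost of the one genuinely delicate piece of the statement, namely the overall sign $(-1)^{\chi(\mathcal{V})+\rk H^{1}(\mathcal{V}\oplus\mathcal{V}^{*})^{T}}$ (including the bookkeeping of the $T$-fixed pieces of cohomology, which you correctly identify as the role of the $\rk H^{1}(\cdot)^{T}$ term): in your write-up this is only announced as ``a short case analysis'' on $\deg L$ and is not actually carried out, and it is exactly what the paper's citation to \cite{MLiu} supplies. To turn your proposal into a complete proof you would need to execute that case-by-case sign check (or, as the paper does, cite it); everything else in your outline is sound.
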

\begin{proof}
By a result of Martens-Thaddeus \cite{MT}, the vector bundle $\mathcal{V}$ splits as a direct sum of line bundles.  Then the formula comes from \cite[Example 98]{MLiu}.
\end{proof}

The torus $\bbT$ acts on the moduli stack $\cM_{0,2}(X,d)$.  The following lemma is very similar to Lemma 6 in \cite{OP} (with the same proof):
\begin{lem}
The $\bbT$-fixed broken twisted maps contribute zero under $\bbT$-localization. 
\end{lem}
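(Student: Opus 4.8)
Since the authors have announced that the proof is ``the same as \cite[Lemma 6]{OP}'', the plan is to transplant that argument to the twisted setting, the only new work being the bookkeeping forced by orbifold points. First I would stratify the $\bbT$-fixed locus of $\cM_{0,2}(X_\A,d)$ by decorated graphs. By Proposition \ref{deformation:hypertoric:subregular} every compact curve lies in some $\cM^{S}\cong T^{*}\pp^{|S|-1}_{\mathbf{w}}$, so it suffices to work on $X=T^{*}\pp^{n}_{\mathbf{w}}$, where a $\bbT$-fixed twisted stable map is a chain whose contracted components sit over $\bbT$-fixed points and whose non-contracted components cover $\bbT$-invariant orbicurves $\pp^{1}_{s_1,s_2}$ as classified in Proposition \ref{classification:P1orbifold}, with fractional degrees and box decorations recorded by (\ref{degree:P1:to:XA}). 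By \S\ref{broken:unbroken} a broken twisted map is one with a node $s$ at which the two branch weights are not opposite (or one of them vanishes); I would fix such a breaking node and let $F$ denote the fixed component of $[f]$.

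Next I would compute the $\bbT$-character of the virtual tangent complex of $\cM_{0,2}(X_\A,d)$ at $[f]$ for the reduced obstruction theory (\ref{reduced:obstruction:theory}), edge by edge and vertex by vertex, exactly as in \cite[Lemma 6]{OP} (cf. \cite[\S7.3]{MO}, \cite[\S4]{BMO}). The point is that the pairing $\ev^{\star}T_{X}\to\omega_{\pi}\otimes(\cc\omega)^{*}$ coming from the holomorphic symplectic form couples the base (tangent) directions of $f^{\star}T_{X}$ to the fibre (cotangent) directions through Serre duality twisted by $\cc\omega$; tracing this coupling through the breaking node $s$ produces, in the reduced virtual tangent complex at $[f]$, a trivial $\bbT$-summand that is absent for unbroken maps. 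In the localization formula this trivial summand forces the contribution of $F$ to the pushforward $\mathbf{ev}_{\star}[\cM_{0,2}(X_\A,d)]^{\red}$ to vanish --- equivalently, one can read off the same vanishing from the fact that the reduced cycle is the cosection-localized cycle of \cite{KL} for the symplectic cosection, which is non-degenerate along $F$. Summing over all broken components then gives the lemma.

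The one genuinely new ingredient, and the place I expect the main (though not deep) difficulty, is the twisted bookkeeping at the node $s$: a twisted node carries a $\mu_{r}$-gerbe, so the ``branch weight at $s$'' must be read on the normalized coarse curve after the age and fractional-degree corrections encoded in Proposition \ref{classification:P1orbifold} and (\ref{degree:P1:to:XA}), and one must check that the Serre-duality pairing at $s$ and the map $\ev^{\star}T_{X}\to\omega_{\pi}\otimes(\cc\omega)^{*}$ respect the $\mu_{r}$-eigenspace decomposition of $f^{\star}T_{X}$ at $s$. Because the scaling $\cc^{\times}\subset\bbT$ acts with weight $\hbar$ on the cotangent fibres and trivially on the gerbe structures, these compatibilities hold and the weight computation of \cite[Lemma 6]{OP} goes through unchanged; modulo this verification, the proof is identical to the non-twisted case.
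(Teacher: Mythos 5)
The paper itself gives no written argument here --- it simply defers to \cite[Lemma 6]{OP} --- so what has to be judged is whether your reconstruction of that argument is sound, and I do not think it is in its present form. Your decisive claim is that at a breaking node the symplectic coupling $\ev^{\star}T_X\to\omega_\pi\otimes(\cc\omega)^{*}$ produces ``a trivial $\bbT$-summand'' of the reduced virtual tangent complex, whose presence forces the localization contribution to vanish. But the extra summand you get this way is the second cosection line coming from the second non-contracted piece of the broken curve, and its $\bbT$-weight is the weight of the symplectic form, i.e.\ $\pm\hbar$, not zero: it is trivial only for the subtorus of $\bbT$ that \emph{preserves} $\omega$. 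A line of weight $\hbar$ sitting in the (moving part of the) obstruction does not kill a $\bbT$-localization contribution --- it just contributes a factor $\pm\hbar$. Concretely, for $X=T^*\pp^1$ take the broken $\bbT$-fixed point of $\cM_{0,2}(X,2)$ consisting of two degree-one components attached at a node over $0$, with the two markings over $\infty$: a direct weight count gives deformation weights $\{a,-a,-a,2a\}$ and reduced obstruction weights $\{\hbar-a,\hbar\}$, so there is no zero-weight factor and your stated mechanism yields nothing. The actual OP-style vanishing requires either localizing with respect to the symplectic subtorus (for which the leftover cosection line genuinely has weight zero and sits in the \emph{fixed} part of the reduced obstruction, so that $e(\mathrm{Obs}^{\rm fix})=0$), or an additional argument exploiting that the broken contributions are divisible by the extra $\hbar$-factor together with the support and degree constraints on $\mathbf{ev}_{\star}[\cM_{0,2}]^{\red}$; you address neither the choice of torus nor whether your summand lies in the obstruction rather than the deformation part, and both points are exactly where the content of the lemma lives.

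Your ``equivalent'' justification via cosection localization is also incorrect as stated: the Kiem--Li cosection attached to $\omega$ is surjective at \emph{every} stable map of nonzero degree to a holomorphic symplectic target --- that is precisely why the non-equivariant virtual class vanishes for $d\neq 0$ --- and the reduced class is not the cosection-localized class. So ``the cosection is non-degenerate along $F$, hence $F$ contributes zero to the reduced cycle'' proves far too much: it would annihilate all reduced invariants, broken and unbroken alike. The surrounding scaffolding of your write-up (reduction to $T^*\pp^{n}_{\mathbf{w}}$ via the subregular deformation, graph-by-graph localization, and the twisted bookkeeping at orbifold nodes via Proposition \ref{classification:P1orbifold}) is consistent with what the paper intends, but the central vanishing step is the whole point of the lemma and, as written, it does not go through.
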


The $\bbT$-fixed points of $X$ are all contained in the $0$-section $\pp^n_{\mathbf{w}}$, and are the same as the fixed points set of the torus action on $\pp^n_{\mathbf{w}}$.
The torus $\bbT$-fixed one-dimensional orbits of $X$ are contained in $\pp^n_{\mathbf{w}}$ and gerbes of weighted projective lines.  The following is a stacky version of \cite[Lemma 7]{OP}:

\begin{lem}\label{broken:maps:reducible}
There are no unbroken twisted stable maps in $\cM_{0,2}(X,d)^{\bbT}$ with reducible domains connecting two $\bbT$-fixed points in $\pp^n_{\mathbf{w}}$.
\end{lem}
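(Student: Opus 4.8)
The plan is to argue by contradiction, playing the balancing condition built into the notion of an unbroken chain against the $\qq$-linear independence of the $\bbT$-equivariant parameters $u_1,\dots,u_m,\hbar$. Suppose $f\colon\cC\to X$ is an unbroken twisted stable map in $\cM_{0,2}(X,d)^{\bbT}$, $d>0$, with reducible domain whose image connects two distinct $\bbT$-fixed points of $\pp^n_{\mathbf{w}}$. As recalled in \S\ref{section:case:weighted:projective:stack}, every compact curve in $X=T^*\pp^n_{\mathbf{w}}$ lies in the zero section $\pp^n_{\mathbf{w}}$, so by Proposition~\ref{classification:P1orbifold} each non-contracted component of $\cC$ maps finitely onto a $\bbT$-invariant (weighted) rational curve of $\pp^n_{\mathbf{w}}$, and the non-contracted part of $\cC$ is a chain $\cC_1\cup\cdots\cup\cC_l$ whose images $C_1,\dots,C_l$ trace a connected path of invariant curves through fixed points $p_0,p_1,\dots,p_l$ with $p_0\ne p_l$. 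Because the domain is reducible and joins two distinct fixed points, this path has length $l\ge 2$, hence there is an interior fixed point $p_i$ ($1\le i\le l-1$) over which $\cC_i$ and $\cC_{i+1}$ meet in a node $s$.

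The next step is to evaluate the two branch weights $\omega_{\cC_i,s}$, $\omega_{\cC_{i+1},s}$ at $s$. Since $\cc^\times\subset\bbT$ scales the cotangent fibres and so acts trivially on the zero section, every tangent weight of $T\pp^n_{\mathbf{w}}$ at a fixed point is a $\qq$-linear combination of the base parameters $u_1,\dots,u_m$ and never involves $\hbar$; concretely, the $\bbT$-weight of $T_{p_i}C_i$ is a nonzero combination of the parameters attached to the two endpoints $p_{i-1},p_i$ of $C_i$, and similarly the weight of $T_{p_i}C_{i+1}$ is a nonzero combination of those attached to $p_i,p_{i+1}$. As $\cC_i\to C_i$ and $\cC_{i+1}\to C_{i+1}$ are finite (orbifold) covers, $\omega_{\cC_i,s}$ and $\omega_{\cC_{i+1},s}$ are positive rational multiples of these weights, respectively. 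For $f$ to be unbroken, $s$ must be a node of an unbroken chain, which forces $\omega_{\cC_i,s}=-\omega_{\cC_{i+1},s}\ne 0$; equating the two sides produces a nontrivial $\qq$-linear relation among the parameters of $p_{i-1},p_i,p_{i+1}$ (collapsing, when the path backtracks so that $p_{i-1}=p_{i+1}$, to a nontrivial relation between the parameters of $p_{i-1}$ and $p_i$). Either way this contradicts the $\qq$-linear independence of $u_1,\dots,u_m$, and the lemma follows.

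I expect the one genuinely fiddly point to be the bookkeeping of the auxiliary configurations permitted in the definition of an unbroken twisted map: a contracted component $\cC_0$ carrying both markings with an unbroken chain attached (type (3)), a contracted component inserted at an interior node, or the ``backtracking'' paths above. These must be listed and dispatched individually, but the same principle settles them all --- stability forbids a contracted genus-$0$ component with fewer than three special points, so no contracted piece can occupy a free end of the relevant chain, and a contracted component sitting at an interior node of an unbroken chain carries the trivial $\bbT$-action (it is rigid), so its nodal tangent weight vanishes and the ``opposite and nonzero'' requirement fails outright. Beyond this case analysis, the argument is a routine transcription of the scheme-theoretic proof of \cite[Lemma~7]{OP} to the orbifold setting, with Proposition~\ref{classification:P1orbifold} supplying the needed description of covers of the invariant curves of $\pp^n_{\mathbf{w}}$.
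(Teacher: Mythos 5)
Your core argument is correct and is exactly the mechanism behind the paper's very terse proof (and behind \cite[Lemma 7]{OP}): compactness forces every non-contracted component into the zero section $\pp^n_{\mathbf{w}}$; an interior node of the non-contracted chain would require two $\bbT$-invariant directions of $\pp^n_{\mathbf{w}}$ at a fixed point to carry opposite nonzero weights; and since those weights have the form $\lambda_j-\tfrac{w_j}{w_i}\lambda_i$ (no $\hbar$ enters, because any branch leaving the zero section would be non-compact), the balancing condition would impose a nontrivial linear relation among the equivariant parameters, which is impossible. One small slip: the relevant independence is that of the parameters $\lambda_1,\dots,\lambda_m,\hbar$ generating $H^*_{\bbT}(\pt)$, not of the classes $u_1,\dots,u_m$, which live on $X_\A$ and only restrict to combinations of the $\lambda_i$ at fixed points.

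The genuine problem is your dispatching of contracted end components. You assert that ``stability forbids a contracted genus-$0$ component with fewer than three special points, so no contracted piece can occupy a free end of the relevant chain,'' but a contracted end component carrying both markings and one node has exactly three special points and is stable; this is literally case (3) of the paper's definition of an unbroken twisted map in \S\ref{broken:unbroken}. Concretely, take a multiple cover of a single invariant curve joining two distinct fixed points and attach such a contracted two-pointed component over one end: this is an unbroken, $\bbT$-fixed twisted stable map with reducible domain whose \emph{image} connects two distinct fixed points. So under your stated reading of ``connecting two $\bbT$-fixed points'' (a condition on the image) the lemma would be false, and your stability claim does not exclude the offending configurations. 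The statement must be read, as in \cite[Lemma 7]{OP}, as a condition on the marked points: the two evaluations land at two distinct $\bbT$-fixed points. With that reading the case-(3) configurations are outside the scope of the lemma (both markings evaluate at the same point), any remaining contracted component is interior and is correctly killed by your zero-weight observation, and your interior-node weight argument completes the proof. The fix is therefore only a matter of interpreting the statement correctly and deleting the erroneous stability claim, but as written that branch of your case analysis fails.
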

\begin{proof}
The torus fixed one-dimensional orbits of $X$ are gerbes of weighted projective lines. Then the result follows from the fact that there are no $\bbT$-fixed unbroken twisted maps connecting the $\bbT$-fixed points of $X$ to itself. 
\end{proof}

Look at the evaluation map 
$\mathbf{ev}: \cM_{0,2}(X,d): \to IX\times IX$,  from Lemma \ref{broken:maps:reducible},  the cycle 
$\mathbf{ev}_{\star}([\cM_{0,2}(X,d)]^{\red})$ is supported away from the diagonal of $IX\times IX$. 
Also the affinization $X\to \overline{X}^0$ contracts the zero section. Hence 
the cycle $\mathbf{ev}_{\star}([\cM_{0,2}(X,d)]^{\red})$ factors through 
$IX_{I\overline{X}^0}IX\subset IX\times IX$. 
Lemma \ref{broken:maps:reducible} tells us that the only $\bbT$-fixed twisted stable maps 
come from a cyclic cover map to a $\bbT$-fixed one dimensional orbit inside weighted projective stack 
$\pp^n_{\mathbf{w}}$. 
So let 
$\pp^1_{s_1, s_2}\to X$ be a $\bbT$-fixed twisted stable map to $X$, such that 
it maps to the $\bbT$-fixed one dimensional 
orbit inside $\pp^n_{\mathbf{w}}$ corresponding to $f_1, f_2\in F$. 
We apply the formula in Lemma \ref{preliminary:lemma} to 
$\mathcal{V}=f^\star T_{\pp^n_{\mathbf{w}}}$. 
Note that 
$f^\star T_{X}=\mathcal{V}\oplus \mathcal{V}^*$. 
Let $P:=\pp^n_{\mathbf{w}}$. 
By localization as in \cite[Chapter 11]{MO},  the contribution of $f$ to 
$\mathbf{ev}_{\star}([\cM_{0,2}(X,d\ell)]^{\red})$ is :
$$\frac{1}{\mbox{Aut}(f)}e_{\bbT}(H^{0}-H^{1}(\mathcal{V}\oplus\mathcal{V}^*))^{-1}$$
and 
the component $\mathbf{ev}_{\star}([\cM_{0,2}(X,d)^{f_1,f_2}]^{\red})\in H_*(X_{f_1}\times X_{f_2})$ is:
$$\frac{1}{d}(-1)^{\rk(\mathcal{V})+\int_{d\ell}c_1(T_{\pp^n_{\mathbf{w}}})}(-1)^{-\age(P_{f_1})}[P_{f_1}]\times 
(-1)^{-\age(P_{f_2})}[P_{f_2}].$$
Set 
$$\Gamma_{f_1, f_2}:=(-1)^{-\age(P_{f_1})}[P_{f_1}]\times 
(-1)^{-\age(P_{f_2})}[P_{f_2}]; \quad  \Gamma:=\bigoplus_{f_1, f_2\in F}\Gamma_{f_1, f_2}.$$

For fixed $f_1, f_2\in F$, it is not hard to see from Proposition \ref{classification:P1orbifold} that 
those degrees $d\in \zz_{\geq 0}$ such that there exist maps 
$\pp^1_{s_1, s_2}\to P$ with stack structures specified by 
$(f_1, f_2)$, are exactly of the following form\footnote{The existence of $\gamma(f_1, f_2)$ imposes constraints on the possible pairs $(f_1, f_2)$.}:
$$d=\delta\cdot \lcm(w_0, \cdots,w_n)+ r(f_1, f_2),$$
where $\delta\in \zz_{\geq 0}$, 
$r(f_1, f_2)\in \{0, 1,\cdots, \lcm(w_i)-1\}$ is a number such that 
$\langle \frac{r(f_1, f_2)}{w_i}\rangle=f_1$, $\langle \frac{r(f_1, f_2)}{w_j}\rangle=f_2$, and
$r(f_1, f_2)\equiv 0~ \mbox{mod}~ w_k$ for $w_k\neq   w_i, w_j$ 
(here $f_1=\frac{a_1}{w_i}$, $f_2=\frac{a_2}{w_j}$).
From (\ref{quantum:D}),  the quantum product by divisor $D$ is: 
\begin{align}\label{quantum:D:formula}
D\star \gamma_i &=D\cup_{\CR} \gamma_i+
 \sum_{d\ell> 0}\int_{d\ell}D\cdot Q^{d\ell}\cdot \hbar \cdot \inv^*\cdot Ip_{2\star}(Ip_1^\star(\gamma_i)\cap \mathbf{ev}_{\star}[\cM_{0,2}(X, d)]^{\red}). \nonumber \\
 &=D\cup_{\CR} \gamma_i+
\int_{\ell}D\cdot  \sum_{d\ell> 0} Q^{d\ell}\cdot d\cdot \hbar \cdot \inv^*\cdot Ip_{2\star}(Ip_1^\star(\gamma_i)\cap \mathbf{ev}_{\star}[\cM_{0,2}(X, d)]^{\red}). \nonumber \\
&=D\cup_{\CR} \gamma_i+
\hbar\int_{\ell}D\cdot (-1)^n\cdot \sum_{\delta\geq 0} \sum_{(f_1,f_2)\in F^2}Q^{(\delta\cdot\tiny\lcm(w_i)+r(f_1, f_2))\ell} \\ \nonumber
&\cdot (-1)^{(\delta\cdot\tiny\lcm(w_i)+r(f_1, f_2))\int_{\ell}c_1(T_{P})}\cdot \inv^*\cdot Ip_{2\star}(Ip_1^\star(\gamma_i)\cap \Gamma_{f_1,f_2}). \\ \nonumber 
&=
D\cup_{\CR} \gamma_i+
\begin{cases}
\hbar\int_{\ell}D\cdot 
(-1)^n\frac{(Q^{\ell}(-1)^{\sum_i\frac{1}{w_i}})^{r(f_1,f_2)}}{1-((-1)^{\sum_i\frac{1}{w_i}}Q^{\ell})^{\tiny\lcm(w_i)}}
\Gamma_{f_1,f_2}; &  r(f_1,f_2)\neq 0;\\
 \hbar\int_{\ell}D\cdot 
(-1)^n\frac{(Q^{\ell}(-1)^{\sum_i\frac{1}{w_i}})^{\tiny\lcm(w_i)}}{1-((-1)^{\sum_i\frac{1}{w_i}}Q^{\ell})^{\tiny\lcm(w_i)}}
\Gamma_{f_1,f_2}; &  r(f_1,f_2)= 0,
\end{cases}
\end{align}
where in the calculation we use $\int_{\ell}c_1(T_{P})=\sum_i\frac{1}{w_i}$, and $\Gamma_{f_1, f_2}$ is the Steinberg correspondence in (\ref{Steinberg:correspondence:Chen-Ruan}). 

\subsection{Proof of Theorem \ref{Quantum:by:a:divisor}}
 Recall that in \S \ref{Section:deformation:hypertoricDM}, the hypertoric Deligne-Mumford stack $X_\A$ can be deformed into $X_\lambda$ for sub-regular parameter.  Proposition \ref{deformation:hypertoric:subregular} says that $X_\lambda$ contains a substack $\cM^S\to \cM^S_0$ for any circuit $S\subset \A$, which is the weighted projective bundle over a singular base $\cM^S_0$.  The fibre is the weighted projective stack 
$\pp^{|S|-1}_{\mathbf{w}}$ with fibre-wise normal bundle the cotangent bundle $T^*\pp^{|S|-1}_{\mathbf{w}}$.  Hence the curve class in the $2$-point Gromov-Witten invariants must lie in $\pp^{|S|-1}_{\mathbf{w}}$.  The twisted sectors $(X_\lambda)_{f_i}$ for $f_i$ is determined by $f_i\in F$ for $i=1,2$.  In this case 
the Lagrangian cycle 
$\Gamma_{f_1,f_2}\subset IX_\lambda\times IX_\lambda$ is actually  lying in 
$I\cM^S\times I\cM^S$.  Then same calculation as in \S \ref{section:case:weighted:projective:stack} concludes that  we have the same as (\ref{quantum:D:formula}). The theorem is proved.

\section{Gromov-Witten theory of hypertoric Deligne-Mumford stacks}\label{Section:GW:theory:hypertoricDM}

In this section we compare the $\bbT$-equivariant Gromov-Witten theory of $X_\A$ with the $\bbT$-equivariant Gromov-Witten theory of the associated Lawrence toric Deligne-Mumford stack 
$X_{\mathbf{\theta}}$.   

\subsection{Comparison of Gromov-Witten invariants}

Let $\A$ be a stacky hyperplane arrangement and $X_\A$ the corresponding hypertoric Deligne-Mumford stack.
As discussed in \S \ref{Section:hypertoricDMstack}, $X_\A$ is defined as a closed substack of the corresponding Lawrence toric Deligne-Mumford stack $X_{\mathbf{\theta}}$.    The stacky fan $\mathbf{\Sigma_{\theta}}$ determines an open sub-variety $X:=\cc^{2m}\setminus V(\mathcal{I}_{\theta})\subset \cc^{2m}$ and the Lawrence toric Deligne-Mumford stack  $X_{\mathbf{\theta}}$ is the quotient stack $[X/G]$.  Let $Y\subset X$ be the closed sub-variety determined by the ideals in (\ref{ideal1}). The hypertoric Deligne-Mumford stack $X_\A$ is the quotient stack $[Y/G]$. 

\begin{lem}\label{trivial:N}
The normal bundle $N:=N_{X_\A/X_{\mathbf{\theta}}}$ of $X_\A\subset X_\theta$ is trivial. 
\end{lem}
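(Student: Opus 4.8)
The plan is to lift the problem to the $G$-spaces $Y\subset X$ from \S\ref{Section:hypertoricDMstack}, exhibit a $G$-equivariant trivialization of the conormal bundle $N^\vee_{Y/X}$ there, and then descend it along the quotient $X_\A=[Y/G]\subset[X/G]=X_\theta$.

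First I would pin down the defining equations. By (\ref{ideal1}), $Y$ is the vanishing locus in $X=\cc^{2m}\setminus V(\mathcal{I}_\theta)$ of the functions $f_x:=\sum_{i=1}^m(\beta^\vee)^*(x)_i\,z_iw_i$, as $x$ runs over a basis $x_1,\dots,x_{m-d}$ of $DG(\beta)^*$ (which has rank $m-d$ by (\ref{exact1})). Since $X_\A$ and $X_\theta$ are both smooth, the closed immersion $X_\A\hookrightarrow X_\theta$ is regular, of codimension $\dim X_\theta-\dim X_\A=(2m-(m-d))-2d=m-d$; equivalently, upstairs, the conormal sheaf $N^\vee_{Y/X}$ is locally free of rank $m-d$. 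Being generated by the $m-d$ sections $\bar f_{x_1},\dots,\bar f_{x_{m-d}}$ read off from the defining functions, these sections automatically form a global frame. (Alternatively: by \cite{HS} the ideal $I_{\beta^\vee}$ is prime, a dimension count forces it to have codimension exactly $m-d$ in $\cc^{2m}$, and since $\cc[z,w]$ is Cohen--Macaulay the $m-d$ given generators form a regular sequence, so $Y\hookrightarrow X$ is a regular immersion.)

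The heart of the argument is the observation that each $f_x$ is $G$-invariant. The $G$-action on $\cc^{2m}$ is through $\alpha^L$ in (\ref{exact5}), which is $\Hom(-,\cc^\times)$ applied to $\beta_L^\vee=(\beta^\vee,-\beta^\vee)\colon\zz^{2m}\to DG(\beta)$ from (\ref{exact3}); hence $z_i$ carries $G$-weight $a_i:=\beta^\vee(e_i)$ while $w_i$ carries $G$-weight $-a_i$, so every monomial $z_iw_i$, and therefore every $f_x$, is $G$-invariant. Consequently the frame $(\bar f_{x_j})$ furnishes a $G$-equivariant isomorphism $N^\vee_{Y/X}\cong\mathcal{O}_Y\otimes_\cc DG(\beta)^*$ for the \emph{trivial} $G$-action on $DG(\beta)^*$. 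Passing to the quotient stacks gives $N^\vee_{X_\A/X_\theta}\cong\mathcal{O}_{X_\A}\otimes_\cc DG(\beta)^*\cong\mathcal{O}_{X_\A}^{\oplus(m-d)}$, and dualizing shows $N=N_{X_\A/X_\theta}$ is trivial.

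The only step requiring genuine care is the weight bookkeeping: one must use the sign in the Lawrence lifting $\beta_L^\vee=(\beta^\vee,-\beta^\vee)$ so that the $G$-weights of $z_i$ and $w_i$ are genuinely opposite and cancel in $z_iw_i$ — it is this cancellation, and essentially nothing else, that makes the lemma true. For later use in Proposition~\ref{invariants:compare} I would also record the refinement that, since $z_i,w_i$ have opposite $T$-weights $\pm t_i$ and $w_i$ has a nonzero weight under the scaling $\cc^\times$, each $f_x$ is a $\bbT$-eigenfunction of weight $\hbar$, so that $N$ is $\bbT$-equivariantly a direct sum of $m-d$ copies of a single equivariant line bundle (namely $\mathcal{O}_{X_\A}$ twisted by the character $\hbar$).
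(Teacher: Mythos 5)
Your argument is correct and is essentially the paper's own proof: the paper observes that in the equations (\ref{ideal1}) the coordinates $z_i$ and $w_i$ are sections of mutually dual line bundles on $X_\theta$, which is exactly your $G$-weight cancellation $a_i+(-a_i)=0$ coming from the Lawrence lifting $(\beta^\vee,-\beta^\vee)$; you have simply spelled out the descent from $Y\subset X$ and the rank count. (The paper also offers the alternative of invoking the generalized Euler sequence of \cite[Section 1.1.1]{Edidin}, which your argument does not need.)
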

\begin{proof}
This can be deduced from generalized Euler sequence in \cite[Section 1.1.1]{Edidin}.  Alternatively, $N$ is trivial because in the equations for $X_\A$ in (\ref{ideal1}), $z_i$ and $w_i$ are sections of line bundles of $X_\theta$ that are dual to each other.  
\end{proof}

The tori $T$ and $\bbT$ act on both $X_\A$ and $X_{\mathbf{\theta}}$. The inclusion $\iota: X_\A\hookrightarrow X_{\mathbf{\theta}}$ is equivariant with respect the the actions of these two tori.

\begin{lem}\label{fixed:loci}
The fixed loci $X_\A^{T}$ is the same as the fixed loci $X_{\mathbf{\theta}}^{T}$.
\end{lem}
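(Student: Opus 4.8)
The plan is to compare the two fixed loci inside the Cox/GIT model. Recall from \S\ref{LawrenceDMstack} and \S\ref{Section:hypertoricDMstack} that $X_{\mathbf{\theta}}=[(\cc^{2m}\setminus V(\mathcal{I}_\theta))/G]$, and that $X_\A=[Y/G]$ with $Y\subset\cc^{2m}\setminus V(\mathcal{I}_\theta)$ the common zero locus of the $G$-invariant functions $\sum_{i=1}^{m}(\beta^{\vee})^*(x)_i\,z_iw_i$, $x\in DG(\beta)^*$, of (\ref{ideal1}). One inclusion is formal: $X_\A$ is a closed $T$-invariant substack of $X_{\mathbf{\theta}}$, so a point of $X_\A$ is $T$-fixed on $X_\A$ precisely when it is $T$-fixed on $X_{\mathbf{\theta}}$; hence $X_\A^{T}=X_\A\cap X_{\mathbf{\theta}}^{T}\subseteq X_{\mathbf{\theta}}^{T}$. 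All the content of the lemma is therefore the reverse inclusion, which I would reformulate as: \emph{every torus-fixed point of $X_{\mathbf{\theta}}$ already lies on $X_\A$}.

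For this it suffices to show that at any fixed point $p$ all the products $z_iw_i$ vanish, since the functions (\ref{ideal1}) cutting out $Y$ are $\zz$-linear combinations of them, hence vanish at $p$ as well. Now $z_i$ is a global section of the line bundle $L_i$ on $X_{\mathbf{\theta}}$ and $w_i$ a global section of $L_i^{-1}$ (cf.\ Lemma \ref{trivial:N}), so $z_iw_i$ is a genuine regular function on $X_{\mathbf{\theta}}$; and under the cotangent-fibre scaling $\cc^\times\subset\bbT$, which multiplies every $w_j$ by a common scalar while fixing every $z_j$, the function $z_iw_i$ is a semi-invariant of weight $\hbar$, in particular of nonzero weight. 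A nonzero semi-invariant of nonzero weight cannot take a nonzero value at a torus-fixed point: if $s\cdot p=p$ for all $s\in\cc^\times$ then $z_iw_i(p)=z_iw_i(s\cdot p)=s\cdot z_iw_i(p)$ for all $s$, forcing $z_iw_i(p)=0$. Hence every torus-fixed point of $X_{\mathbf{\theta}}$ satisfies the equations of $Y$ and so lies on $X_\A$. Together with the first paragraph, and since $X_{\mathbf{\theta}}^{T}$ is itself torus-fixed, this yields the reverse inclusion and hence equality of the fixed loci.

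To promote this to an equality of fixed loci as closed substacks (not merely as sets of geometric points) I would use Lemma \ref{trivial:N} once more: $\iota:X_\A\hookrightarrow X_{\mathbf{\theta}}$ is a regular closed immersion, and in the equivariant splitting $\iota^*T_{X_{\mathbf{\theta}}}=T_{X_\A}\oplus N$ the conormal directions along $X_\A$ are spanned by the functions $z_iw_i$, each carrying the nonzero weight $\hbar$, so $N$ has trivial torus-fixed part. Consequently the closed immersion $X_\A^{T}\hookrightarrow X_{\mathbf{\theta}}^{T}$ of smooth Deligne-Mumford stacks is bijective on geometric points and induces isomorphisms on all tangent spaces, hence is an isomorphism.

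The one place needing care is the nonvanishing-of-weight step in the second paragraph: the $T=(\cc^\times)^m$-action by itself scales each $z_iw_i$ trivially, so the argument genuinely needs the extra cotangent-fibre scaling $\cc^\times$ — that is, it is the equality of the $\bbT$-fixed loci that is really being established, which is exactly what the $\bbT$-equivariant localization in \S\ref{Section:GW:theory:hypertoricDM} uses. Granting that, the remaining ingredients — the two inclusions and the comparison of the (possibly non-reduced) stack structures — are routine.
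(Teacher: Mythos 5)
Your proof is correct, but it takes a genuinely different route from the paper's. The paper proves the nontrivial inclusion by reducing to a one\-/dimensional subgroup: it invokes the circle $T'$ acting by overall scaling of $\cc^{2m}$ (the action of \cite[Lemma 6.5]{HS}) and quotes Hausel--Sturmfels and Edidin for $X_\A^{T'}=X_{\mathbf{\theta}}^{T'}$, then sandwiches the fixed locus in between. Your argument is self-contained: the $G$-invariant functions $z_iw_i$, whose linear combinations cut out $Y$ inside $\cc^{2m}\setminus V(\mathcal{I}_\theta)$, carry nonzero weight under the fibre scaling and hence vanish at every fixed point, so the fixed locus of $X_{\mathbf{\theta}}$ lands in $X_\A$; your normal-bundle remark then upgrades the set-theoretic equality to one of fixed substacks, a point the paper does not address at all. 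Your closing caveat is well taken and in fact applies equally to the paper's own proof: since $T$ acts anti-diagonally on the fibre directions, $t\cdot(z,w)=(tz,t^{-1}w)$ (see the symplectic construction in \S\ref{Section:deformation:hypertoricDM}, consistent with $c_1(L_i^{-1})=\hbar-u_i$ in \S\ref{Equivariant:Chen-Ruan:cohomology}), each $z_iw_i$ is $T$-invariant, and the overall scaling $T'$ used in the paper is a one-parameter subgroup of $\bbT$ but not of (the image of) $T$; so the paper's chain $X_{\mathbf{\theta}}^{T}\subset X_{\mathbf{\theta}}^{T'}$ also tacitly uses the extra $\cc^\times$. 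What both arguments honestly establish is the equality of the $\bbT$-fixed loci, which is exactly what the localization in Proposition \ref{invariants:compare} needs; for $T$ alone the statement can genuinely fail (e.g.\ for $T^*\pp^1\subset X_{\mathbf{\theta}}=\oO_{\pp^1}(-1)\oplus\oO_{\pp^1}(-1)$, over each base fixed point the fibre direction of trivial $T$-weight gives a whole $T$-fixed line of $X_{\mathbf{\theta}}$ meeting $X_\A$ only at the origin). In sum, your approach buys a shorter, reference-free argument together with the scheme-theoretic statement, at the necessary price of working with $\bbT$ rather than $T$ --- a price the paper's proof pays as well, only implicitly.
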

\begin{proof}
It is clear that $X_\A^{T}\subset X_{\mathbf{\theta}}^{T}$.  Let $T^\prime$ be the one dimensional torus defined by the vector  $\sum_{i=1}^{m}b_i\in N$.  The  $T^\prime$-action on the Lawrence toric Deligne-Mumford stack is induced by the multiplication by non-zero complex numbers on $\cc^{2m}$.  This is the action defined in  \cite[Lemma 6.5]{HS}. 
It is shown in \cite{HS} and \cite{Edidin} that $X_\A^{T^\prime}= X_{\mathbf{\theta}}^{T^\prime}$. Thus 
$ X_{\mathbf{\theta}}^{T} \subset X_{\mathbf{\theta}}^{T^\prime}=X_\A^{T^\prime}= X_\A^{T}$, 
which implies that $ X_{\mathbf{\theta}}^{T} \subset X_\A^{T}$.
\end{proof}

\begin{lem}\label{one-dimensional:orbits}
The $T$-actions on both $X_\A$ and $X_{\mathbf{\theta}}$ have identical compact one-dimensional orbits. 
\end{lem}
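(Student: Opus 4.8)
The plan is to reduce the statement about $X_\A$ to the already-established facts about the Lawrence toric Deligne-Mumford stack $X_\theta$, exactly in the spirit of Lemma \ref{fixed:loci}. First I would recall that by Lemma \ref{fixed:loci} the fixed loci agree, $X_\A^T = X_\theta^T$, and that a compact one-dimensional $T$-orbit closure is a $\pp^1$ (or a gerbe over one) joining two fixed points. Since $X_\A \hookrightarrow X_\theta$ is a $T$-equivariant closed embedding, every compact one-dimensional $T$-orbit in $X_\A$ is automatically a compact one-dimensional $T$-orbit in $X_\theta$; so the content is the reverse inclusion, namely that every such orbit in $X_\theta$ actually lies on the substack $X_\A$.

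For the reverse inclusion I would argue as follows. Let $C \subset X_\theta$ be the closure of a compact one-dimensional $T$-orbit, so $C$ connects two fixed points $p, q \in X_\theta^T = X_\A^T$. Both $p$ and $q$ lie on $X_\A$, so it suffices to show that the whole orbit closure $C$ does. Here I would use the combinatorial description: the compact one-dimensional orbits of the Lawrence toric stack correspond to walls (codimension-one faces shared by two maximal cones) of the Lawrence fan $\Sigma_\theta$, and by the Lawrence construction (Remark \ref{lawrencermk-s}) the rays of $\Sigma_\theta$ come in dual pairs $b_{L,i}=(b_i,e_i)$, $b'_{L,i}=(0,e_i)$. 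A compact orbit closure $C$ sits inside the torus-invariant curve attached to a wall; I would check that the wall relation forces the coordinate-pair $(z_i,w_i)$ along $C$ to satisfy the defining equations $\sum_i (\beta^\vee)^*(x)_i z_i w_i = 0$ of $Y$ in (\ref{ideal1}) — essentially because for each $i$ at most one of $z_i, w_i$ is nonvanishing on a torus-invariant curve of $\Sigma_\theta$, making each monomial $z_i w_i$ vanish identically on $C$. Hence $C \subset [Y/G] = X_\A$.

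An alternative, cleaner route — which I would actually prefer to write — uses the auxiliary torus $T'$ of Lemma \ref{fixed:loci}, defined by $\sum_i b_i \in N$, together with Lemma \ref{trivial:N}. Since $X_\A^{T'} = X_\theta^{T'}$ by \cite{HS}, \cite{Edidin}, and the normal bundle $N_{X_\A/X_\theta}$ is trivial (Lemma \ref{trivial:N}) carrying a nontrivial $T'$-weight on each of its summands (the pairs $z_i$, $w_i$ have opposite weights, and the extra scaling $T'$ acts nontrivially), any compact $T$-invariant curve $C$ in $X_\theta$ is $T'$-invariant as well, so its generic point is $T'$-fixed in the normal directions; triviality of $N$ with nonzero $T'$-weights then forces $C$ to be contained in the zero locus of the normal coordinates, i.e. in $X_\A$. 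I would phrase this by noting that a one-dimensional orbit $C$ in $X_\theta$ whose generic point is not on $X_\A$ would move under $T'$ transversally to $X_\A$, contradicting compactness of $C$ together with $X_\theta^{T'} = X_\A^{T'}$ (the orbit would have to escape to the non-compact normal directions).

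The main obstacle will be making the transversality/weight bookkeeping precise in the stacky setting: one must be careful that ``compact one-dimensional orbit'' for a Deligne–Mumford stack means a one-dimensional substack whose coarse space is $\pp^1$, possibly with generic gerbe structure, and that the $T'$-weights on the normal bundle $N$ really are all nonzero along every such curve — this is where the explicit Lawrence description of the line bundles dual to $z_i, w_i$ (used in the proof of Lemma \ref{trivial:N}) is needed. Once that is in hand, the conclusion $X_\A \hookrightarrow X_\theta$ induces a bijection on compact one-dimensional $T$-orbits is immediate, and this is exactly what is needed to transport the localization computation of Gromov–Witten invariants between the two stacks in the subsequent comparison (Proposition \ref{invariants:compare}).
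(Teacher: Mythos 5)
Your reduction---show that every compact one-dimensional $T$-orbit of $X_{\mathbf{\theta}}$ already lies on the closed substack $X_\A$---is the right content, but your argument is genuinely different from the paper's, which is a one-liner: both stacks have the same maximal projective substack, namely the core $C(X_\A)$ of \S\ref{core:hypertoricDMstack}, and since any compact curve is contracted by the affinization map $X_{\mathbf{\theta}}\to \mbox{Spec}\, H^0(\mathcal{O}_{X_{\mathbf{\theta}}})$, all compact one-dimensional $T$-orbits are contained in $C(X_\A)\subset X_\A$. Your preferred (second) route can be made into a correct, self-contained alternative, but as written its decisive step is a heuristic: ``the orbit would move transversally to $X_\A$ and escape to the non-compact normal directions'' is not yet an argument. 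The precise version is short: by Lemma \ref{trivial:N} the ideal (\ref{ideal1}) is generated by the $G$-invariant functions $\mu_x=\sum_i(\beta^{\vee})^{*}(x)_i z_i w_i$, which therefore descend to global functions on $X_{\mathbf{\theta}}$; on the closure $C$ of a compact one-dimensional $T$-orbit each $\mu_x$ is constant, and the constant is zero either because $C$ contains a $T$-fixed point, which lies in $X_{\mathbf{\theta}}^{T}=X_\A^{T}$ by Lemma \ref{fixed:loci}, or because $\mu_x$ has nonzero $T'$-weight while $C$ is $T'$-invariant (if $T'$ acts trivially on $C$ then already $C\subset X_{\mathbf{\theta}}^{T'}=X_\A^{T'}$). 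Hence $\mu_x\equiv 0$ on $C$ and $C\subset X_\A$. What this buys compared with the paper's proof is independence from the discussion of the core; what the paper's proof buys is brevity and the extra information that the orbits lie in $C(X_\A)$.

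Your first (combinatorial) route has a genuine gap: you identify compact one-dimensional $T$-orbits with the torus-invariant curves attached to walls of $\Sigma_{\theta}$, but $T$ acts on $X_{\mathbf{\theta}}$ through a proper subtorus of the big torus (of dimension at most $m<m+d$), and the compact closure of a one-dimensional orbit of a subtorus need not be invariant under the full torus (for the diagonal $\cc^{\times}$ in $\pp^2$ such closures are lines that are not boundary curves). That identification only becomes available after one knows the orbit lies in the core, which is essentially the statement being proved. The secondary assertion---that along any wall shared by two maximal cones at least one of each pair $z_i,w_i$ vanishes---is true but needs the short verification you only promise: a wall $\tau$ missing both $b_{L,i}$ and $b'_{L,i}$ cannot be a common facet of two maximal cones, since each maximal cone contains a member of every pair, so the two cones would be $\tau+\mathrm{cone}(b_{L,i})$ and $\tau+\mathrm{cone}(b'_{L,i})$, forcing $b_{L,i}+\lambda b'_{L,i}\in\mathrm{span}(\tau)$ for some $\lambda>0$, impossible because every ray of $\tau$ has vanishing $e_i$-component while this vector does not.
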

\begin{proof}
The projective substack of $X_\A$ and $X_{\mathbf{\theta}}$ is the core $C(X_\A)$ defined in \S \ref{core:hypertoricDMstack}.  Since the core $C(X_\A)$ and any compact curve are contracted by the affinization map 
$X_{\mathbf{\theta}}\to \mbox{Spec} H^0(\mathcal{O}_{X_{\mathbf{\theta}}})$, we see that the compact one-dimensional $T$-orbits are all contained in $C(X_\A)$. 
\end{proof}

 The inclusion $\iota$ induces an inclusion of moduli stacks $$\iota: \cM_{g,n}(X_\A,d)\hookrightarrow  \cM_{g,n}(X_{\mathbf{\theta}},d),$$
which is $\bbT$-equivariant.  

By \cite{Edidin}, the pullback
$$\iota^\star: H_{\CR, \bbT}^*(X_{\mathbf{\theta}})\to H^*_{\CR, \bbT}(X_\A)$$
is a ring isomorphism. 

The following result concerns genus $0$ Gromov-Witten invariants.

\begin{prop}\label{invariants:compare}
Let $\gamma_1,\cdots,\gamma_n\in H^*_{\CR, \bbT}(X_{\mathbf{\theta}})$, and 
$k_1,\cdots,k_n\in\zz_{\geq 0}$. Then there is an equality on descendant $\bbT$-equivariant Gromov-Witten invariants
$$\left\langle \prod_{i=1}^{n}\gamma_i\psi_i^{k_i}\right\rangle_{0,n,d}^{X_{\mathbf{\theta}}}=
\frac{\left\langle \prod_{i=1}^{n}\iota^\star(\gamma_i)\psi_i^{k_i}\right\rangle_{0,n,d}^{X_\A}}{e_{\bbT}(N)},$$
where $e_{\bbT}(N)$ is the equivariant Euler class of $N$. 
Note that the Gromov-Witten invariants take values in $H^*_{\bbT}(pt)$. 
\end{prop}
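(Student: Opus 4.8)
The plan is to leverage the triviality of the normal bundle $N = N_{X_\A/X_{\mathbf\theta}}$ established in Lemma \ref{trivial:N}, together with the fact that the moduli stacks of stable maps sit inside one another compatibly with the $\bbT$-action. First I would observe that since all compact curves of $X_\theta$ lie in the core $C(X_\A)\subset X_\A$ (Lemma \ref{one-dimensional:orbits}, and more generally the affinization argument: any stable map of positive degree has image contracted by $X_\theta\to \mathrm{Spec}\, H^0(\O_{X_\theta})$, hence lands in $C(X_\A)$), the inclusion $\iota: \cM_{0,n}(X_\A,d)\hookrightarrow \cM_{0,n}(X_\theta,d)$ is in fact an isomorphism of moduli stacks for every effective $d$: a twisted stable map to $X_\theta$ of class $d$ factors set-theoretically through $X_\A$, and since $X_\A\hookrightarrow X_\theta$ is a regular closed immersion with trivial (in particular locally free) conormal bundle, one checks that the map factors scheme-theoretically, giving a bijection on $S$-points for all test schemes $S$. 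This is the first key step and I would spell it out carefully using the defining equations $\sum_i (\beta^\vee)^*(x)_i z_i w_i$ from (\ref{ideal1}): a map whose image has each coordinate either identically zero or generically nonzero in a way forced by the core structure automatically satisfies these relations.

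The second step is to compare the two obstruction theories. Because $\iota$ is a regular embedding with $N$ the (trivial, rank $=\mathrm{codim}$) normal bundle, the perfect obstruction theory of $\cM_{0,n}(X_\theta,d)$ differs from that of $\cM_{0,n}(X_\A,d)$ precisely by the contribution of $R^\bullet\pi_*(\ev^* N)$ over the universal curve. Since $N$ is trivial as a bundle but carries a nontrivial $\bbT$-weight (the $\cc^\times$ scaling the cotangent fibres, as in \S \ref{reduced:virtual:cycle}), on a genus $0$ curve $R^0\pi_*(\ev^*N)$ is a trivial bundle of rank equal to $\mathrm{rk}(N)$ with a fixed $\bbT$-character, and $R^1\pi_*(\ev^*N)=0$. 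Hence the virtual normal bundle of $\cM_{0,n}(X_\A,d)$ inside $\cM_{0,n}(X_\theta,d)$ is $\bbT$-equivariantly trivial, and the excess/virtual pullback formula (or the functoriality of virtual classes under regular embeddings, e.g. following the cosection-free part of \cite{BMO} and standard Gromov-Witten comparison results) gives
$$[\cM_{0,n}(X_\theta,d)]^{\virt} = \frac{\iota_*[\cM_{0,n}(X_\A,d)]^{\virt}}{e_\bbT(N)}$$
as classes in $\bbT$-equivariant Chow, where $e_\bbT(N)\in H^*_\bbT(\pt)$ is the (invertible, after localization) equivariant Euler class of the trivial-but-$\bbT$-weighted bundle $N$ pulled back from a point.

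The third step is bookkeeping: the evaluation maps and psi-classes are compatible, $\ev_i^{X_\theta}\circ\iota = \iota_{I}\circ\ev_i^{X_\A}$ where $\iota_I: IX_\A\hookrightarrow IX_\theta$, and $\iota^\star$ on (equivariant Chen-Ruan) cohomology is the ring isomorphism recorded from \cite{Edidin}. Then one computes
$$\Big\langle \prod_i \gamma_i\psi_i^{k_i}\Big\rangle^{X_\theta}_{0,n,d} = \int_{[\cM_{0,n}(X_\theta,d)]^{\virt}} \prod_i \ev_i^\star\gamma_i\,\psi_i^{k_i} = \frac{1}{e_\bbT(N)}\int_{[\cM_{0,n}(X_\A,d)]^{\virt}} \prod_i \ev_i^\star\iota^\star\gamma_i\,\psi_i^{k_i},$$
using the projection formula for $\iota$ and the fact that $\psi$-classes pull back to $\psi$-classes (the domain curves are unchanged since the moduli stacks are isomorphic). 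This is exactly the claimed identity. I would remark that the $d=0$ case is either excluded or handled separately since then the invariant is a purely cohomological pairing and the normal bundle contributes $e_\bbT(N)$ directly via the excess intersection on $X_\A\subset X_\theta$.

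The main obstacle, I expect, is the second step: making precise the comparison of virtual fundamental classes under the regular embedding $\iota$ in the equivariant orbifold setting, in particular verifying that $R^1\pi_*(\ev^*N)$ genuinely vanishes (which uses genus $0$ plus the structure of $N$ on twisted curves — one must check the vanishing on each twisted rational component and that the gerbe structure at nodes/markings does not introduce higher cohomology) and that the resulting virtual normal bundle is equivariantly the trivial bundle with a single well-defined $\bbT$-character, so that $e_\bbT(N)$ really is a scalar in $H^*_\bbT(\pt)$ that can be pulled out of the integral. Everything else is a standard application of functoriality of Gromov-Witten theory under closed immersions with convex normal bundle.
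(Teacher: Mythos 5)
Your proposal is correct in substance, but it proves the comparison by a genuinely different mechanism than the paper does. You obtain the identity from a global virtual-class statement: since $X_\A\subset X_{\mathbf{\theta}}$ is the zero locus of a section of the ($\bbT$-equivariantly weighted) trivial bundle $N$ of Lemma \ref{trivial:N}, and $f^\star N\cong \mathcal{O}_{\cC}^{\oplus\mathrm{rk}(N)}$ has vanishing $H^1$ on genus-zero twisted curves, functoriality of virtual classes (orbifold quantum Lefschetz for a convex bundle) gives $\iota_\star[\cM_{0,n}(X_\A,d)]^{\virt}=e_{\bbT}(N)\cdot[\cM_{0,n}(X_{\mathbf{\theta}},d)]^{\virt}$, and the projection formula together with $\ev\circ\iota=\iota_I\circ\ev$ and $\iota^\star\psi_i=\psi_i$ yields Proposition \ref{invariants:compare}. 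The paper never compares virtual classes globally: since both targets are non-compact the invariants are \emph{defined} by $\bbT$-equivariant residues, and the paper simply notes (Lemmas \ref{fixed:loci}, \ref{one-dimensional:orbits}) that the two moduli stacks have the same $\bbT$-fixed loci with identical fixed obstruction theories, while their virtual normal bundles differ by $H^0(f^\star N)-H^1(f^\star N)=N$ because $N$ is trivial; dividing by $e_{\bbT}(N)$ is then immediate fixed-locus bookkeeping. Your route buys a cleaner statement valid before localization (after inverting $\hbar$) and treats $d=0$ uniformly, at the cost of invoking the virtual-pullback/quantum-Lefschetz machinery in the orbifold equivariant setting, where you must (and do) check convexity of $\ev^\star N$ on twisted curves; the paper's route is more elementary given that residues are already the definition of the invariants.

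One caveat: your first step, asserting that $\iota:\cM_{0,n}(X_\A,d)\to\cM_{0,n}(X_{\mathbf{\theta}},d)$ is an \emph{isomorphism} of stacks, is neither established by your argument nor needed. The factorization argument via the equations (\ref{ideal1}) works over reduced bases, but a family over a non-reduced base need not factor through $X_\A$ just because its closed points do (the pulled-back equations are nilpotent functions on the base, not automatically zero), and for $d=0$ the claim is simply false; moreover, even an isomorphism of stacks would not by itself compare the two virtual classes, which is the actual content. Fortunately your step two does not rely on step one — the closed embedding, triviality of $N$, and convexity suffice — so the proof stands once you drop, or suitably weaken, that claim.
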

\begin{proof}
We compare the two invariants by 
$\bbT$-equivariant localization. 
By Lemma \ref{fixed:loci} and \ref{one-dimensional:orbits}, we have 
$\cM_{g,n}(X_\A,d)^{\bbT}=\cM_{g,n}(X_{\mathbf{\theta}},d)^{\bbT}$ with identical $\bbT$-fixed obstruction theories. 
A simple check shows that $e_\bbT$ of the virtual normal bundle for $X_{\mathbf{\theta}}$ and $X_\A$ differ by 
$e_\bbT(H^1(f^\star N)-H^0(f^\star N))$, where $f$ is the universal twisted stable map. 
Since $N$ is trivial (Lemma \ref{trivial:N}), we have 
$H^1(f^\star N)=0$ and $e_\bbT(H^0)$ is identified with $e_\bbT(N)$. The result follows. 
\end{proof}

\begin{cor}\label{quantum:ring:isomorphism}
The pullback $\iota^\star$ gives an isomorphism 
$$QH^*_{\bbT,\bbig}(X_{\mathbf{\theta}})\cong QH^*_{\bbT,\bbig}(X_\A).$$
\end{cor}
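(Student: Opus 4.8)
The plan is to deduce the ring isomorphism directly from Proposition \ref{invariants:compare} together with the comparison of equivariant orbifold Poincar\'e pairings on $X_\A$ and $X_{\mathbf{\theta}}$. Recall that the big quantum product on $X_{\mathbf{\theta}}$ is determined, via the nondegenerate equivariant Poincar\'e pairing, by the genus-$0$ $\bbT$-equivariant Gromov-Witten invariants $\langle\gamma_i,\gamma_j,t,\dots,t,\gamma_k\rangle^{X_{\mathbf{\theta}}}_{0,n+3,d}$, and that by Lemma \ref{one-dimensional:orbits} the compact curve classes — hence the Novikov rings $R_{\bbT}[\![Q]\!]$ — of $X_\A$ and $X_{\mathbf{\theta}}$ are identified. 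Since $\iota^\star\colon H^*_{\CR,\bbT}(X_{\mathbf{\theta}})\to H^*_{\CR,\bbT}(X_\A)$ is already known (by \cite{Edidin}) to be an isomorphism of the underlying Chen-Ruan rings, after extending scalars to $R_{\bbT}[\![Q]\!]$ it suffices to show that $\iota^\star$ intertwines the two big quantum products, i.e. $\iota^\star(\gamma_i\star_{\bbig,t}\gamma_j)=\iota^\star\gamma_i\star_{\bbig,\iota^\star t}\iota^\star\gamma_j$ for all $\gamma_i,\gamma_j,t$.

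First I would record the pairing comparison. Both $X_\A$ and $X_{\mathbf{\theta}}$ are noncompact, so all integrals are equivariant residues computed by the localization formula on the common fixed locus $X_\A^{\bbT}=X_{\mathbf{\theta}}^{\bbT}$ (Lemma \ref{fixed:loci}). For a component $F$ of this fixed locus one has $N_{F/X_{\mathbf{\theta}}}=N_{F/X_\A}\oplus(N|_F)$ with $N=N_{X_\A/X_{\mathbf{\theta}}}$; since $N$ is equivariantly trivial (Lemma \ref{trivial:N}), $e_{\bbT}(N|_F)=e_{\bbT}(N)$ is a scalar independent of $F$, invertible in the localized coefficient ring. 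Localization then gives
$$\int_{X_{\mathbf{\theta}}}^{\bbT}\alpha=\frac{1}{e_{\bbT}(N)}\int_{X_\A}^{\bbT}\iota^\star\alpha,$$
and the same relation holds componentwise on the inertia stacks, so $(\alpha,\beta)_{X_{\mathbf{\theta}}}=\tfrac{1}{e_{\bbT}(N)}(\iota^\star\alpha,\iota^\star\beta)_{X_\A}$.

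Then I would combine this with Proposition \ref{invariants:compare} in the case $k_1=\dots=k_n=0$: for all $n,d$,
$$\langle\gamma_i,\gamma_j,\underbrace{t,\dots,t}_{n},\gamma_k\rangle^{X_{\mathbf{\theta}}}_{0,n+3,d}=\frac{1}{e_{\bbT}(N)}\langle\iota^\star\gamma_i,\iota^\star\gamma_j,\underbrace{\iota^\star t,\dots,\iota^\star t}_{n},\iota^\star\gamma_k\rangle^{X_\A}_{0,n+3,d}.$$
Summing against $Q^d/n!$ yields $(\gamma_i\star_{\bbig,t}\gamma_j,\gamma_k)_{X_{\mathbf{\theta}}}=\tfrac{1}{e_{\bbT}(N)}(\iota^\star\gamma_i\star_{\bbig,\iota^\star t}\iota^\star\gamma_j,\iota^\star\gamma_k)_{X_\A}$, while the pairing comparison rewrites the left-hand side as $\tfrac{1}{e_{\bbT}(N)}(\iota^\star(\gamma_i\star_{\bbig,t}\gamma_j),\iota^\star\gamma_k)_{X_\A}$. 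Since $\iota^\star$ is surjective and the pairing on $X_\A$ is nondegenerate over the localized ring, equating these forces $\iota^\star(\gamma_i\star_{\bbig,t}\gamma_j)=\iota^\star\gamma_i\star_{\bbig,\iota^\star t}\iota^\star\gamma_j$; as $t\mapsto\iota^\star t$ is a bijection on $H^*_{\CR,\bbT}$, this is precisely compatibility of the big quantum structures, and $\iota^\star$ is the desired ring isomorphism. The main point requiring care is the bookkeeping of the $e_{\bbT}(N)$ factors — in particular that the same factor produced by Proposition \ref{invariants:compare} for the correlators is exactly the one produced by the Poincar\'e pairing comparison, so that it cancels identically — together with the mild subtlety that, $X_\A$ and $X_{\mathbf{\theta}}$ being noncompact, all of the above must be interpreted over the localization of $R_{\bbT}[\![Q]\!]$ in which $e_{\bbT}(N)$ and the relevant fixed-point Euler classes become invertible.
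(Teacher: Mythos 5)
Your proposal is correct and follows essentially the same route as the paper: reduce to showing $\iota^\star$ intertwines the big quantum products, apply Proposition \ref{invariants:compare} to the $(n+3)$-point correlators, and compare the equivariant orbifold Poincar\'e pairings by localization to the common fixed locus, where the factor $e_{\bbT}(N)$ from the correlators cancels against the one from the pairing (the paper phrases this via $\mbox{PD}_{X_\A}(\iota^\star\gamma)=\iota^\star\,\mbox{PD}_{X_{\mathbf{\theta}}}(\gamma)/e_{\bbT}(N)$, you via nondegeneracy of the pairing — the same computation). The only cosmetic difference is that the paper invokes \cite[Proposition 3.9]{Edidin} to identify $N_{IX_\A/IX_{\mathbf{\theta}}}$ with the pullback of $N$ when passing to inertia-stack components, which is the precise justification for your "componentwise" pairing comparison.
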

\begin{proof}
For $\gamma_1,\gamma_2\in H_{\CR,\bbT}^*(X_{\mathbf{\theta}})$, it suffices to show that 
\begin{equation}\label{key:equality}
\iota^\star(\gamma_1\star_{t}\gamma_2)=\iota^\star\gamma_1\star_{\iota^\star t}\iota^\star\gamma_2
\end{equation}
for $t\in H^*_{\CR, \bbT}(X_{\mathbf{\theta}})$. 
Given $d\in H_2(X_{\mathbf{\theta}})$, the contribution from $d$ to the LHS of (\ref{key:equality}) is the sum over $n$ of 
$$\left\langle\gamma_1, \underbrace{t, \cdots, t}_{n}, \gamma_2,\gamma\right\rangle_{0,n+3,d}^{X_{\mathbf{\theta}}}
\mbox{PD}_{X_{\mathbf{\theta}}}(\gamma),$$
where $\mbox{PD}_{X_{\mathbf{\theta}}}(\gamma)$ is the orbifold Poincare dual of $\gamma$. 
By Proposition \ref{invariants:compare}, this is equal to 
$$\left\langle\iota^\star\gamma_1, \underbrace{\iota^\star t, \cdots, \iota^\star t}_{n}, \iota^\star\gamma_2, \iota^\star\gamma\right\rangle_{0,n+3,d}^{X_{\A}}
\frac{\mbox{PD}_{X_{\mathbf{\theta}}}(\gamma)}{e_\bbT(N)}.$$
By localization, we have the following relation for orbifold Poincar\'e pairings:
$$\int_{IX_{\mathbf{\theta}}}a\cup b
=\int_{IX_{\mathbf{\theta}}^{\bbT}}\frac{a\cup b}{e_{\bbT}(N_{IX_{\mathbf{\theta}}^{\bbT}/IX_{\mathbf{\theta}}})}
=\int_{IX_{\A}^{\bbT}}\frac{\iota^\star a\cup \iota^\star b}{e_{\bbT}(N_{IX_{\A}^{\bbT}/IX_{\A}})\cdot e_\bbT(N)}
=\int_{IX_{\A}}\frac{\iota^\star a\cup \iota^\star b}{e_\bbT(N)},$$
where we use \cite[Proposition 3.9]{Edidin}, which identifies $N_{IX_\A/IX_\theta}$ with the pullback of $N$ to $IX_\A$ under $IX_\A\to X_\A$. 
Therefore 
$\mbox{PD}_{X_\A}(\iota^\star\gamma)=\iota^\star\frac{\mbox{PD}_{X_{\mathbf{\theta}}}(\gamma)}{e_T(N)}$ and we get
\begin{align*}
\left\langle\gamma_1, \underbrace{t, \cdots, t}_{n}, \gamma_2,\gamma\right\rangle_{0,n+3,d}^{X_{\mathbf{\theta}}}
\iota^\star\mbox{PD}_{X_{\mathbf{\theta}}}(\gamma)&=\left\langle\iota^\star\gamma_1, \underbrace{\iota^\star t, \cdots, \iota^\star t}_{n}, \iota^\star\gamma_2, \iota^\star\gamma\right\rangle_{0,n+3,d}^{X_{\A}}\iota^\star \frac{\mbox{PD}_{X_{\mathbf{\theta}}}(\gamma)}{e_T(N)}\\
&=\left\langle\iota^\star\gamma_1, \underbrace{\iota^\star t, \cdots, \iota^\star t}_{n}, \iota^\star\gamma_2, \iota^\star\gamma\right\rangle_{0,n+3,d}^{X_{\A}}\mbox{PD}_{X_\A}(\iota^\star\gamma)
\end{align*}
which is the degree $d$ contribution to the RHS of (\ref{key:equality}). 
\end{proof}

\begin{rmk}[Higher genus Gromov-Witten invariants]

We briefly discuss what happens in higher genus.  Results in toric Gromov-Witten theory \cite{CCIT} shows that $QH_{\bbT, \bbig}(X_{\mathbf{\theta}})$ is semi-simple.  By Corollary \ref{quantum:ring:isomorphism}, the same is true for 
$QH_{\bbT, \bbig}(X_{\A})$. The Givental-Teleman reconstruction \cite{Givental}, \cite{Teleman} applies to determine higher genus Gromov-Witten invariants of $X_{\mathbf{\theta}}$ and $X_\A$, provided the $R$-calibrations can be specified. 

Since equivariant Gromov-Witten theory is not conformal, we follow the toric case to specify $R$ by computing its degree $0$ limit.  The argument for Proposition \ref{invariants:compare} applies to show that the degree $0$, genus $g$ Gromov-Witten invariants of  $X_{\mathbf{\theta}}$ are equal to degree $0$, genus $g$ Gromov-Witten invariants of  $X_{\A}$ twisted by $(N, e_\bbT(-)^{-1})$, which amounts to include Hodge classes. Since $N$ is trivial, we may apply results of \cite{FP} to remove the Hodge classes.  Writing in Givental's formalism \cite{Givental}, this leads to $R_{\deg=0}^{X_{\mathbf{\theta}}}=\Delta R_{\deg=0}^{X_\A}$, where $\Delta$ is an operator that can be explicitly written down.  Together with ancestor/descendant relation, we we should obtain an equality of descendant potentials: $D_{X_{\mathbf{\theta}}}=\hat{\Delta}D_{X_\A}$. We leave the details to the interested readers.
\end{rmk}

 
\subsection{Proof of Theorem \ref{main2:small:quantum:ring}}\label{sec:pf_ring_presentation}
By Corollary \ref{quantum:ring:isomorphism}, the small quantum ring $QH^*_{\bbT}(X_\A)$ is isomorphic to the small quantum ring $QH^*_{\bbT}(X_\theta)$ of the associated Lawrence toric Deligne-Mumford stack $X_\theta$. Theorem \ref{main2:small:quantum:ring} is obtained by deriving a presentation of $QH^*_{\bbT}(X_\theta)$ using known results in toric Gromov-Witten theory.

Recall the construction of Lawrence toric Deligne-Mumford stack $X_\theta$ in \S \ref{LawrenceDMstack}, the toric fan $\Sigma_\theta$ is constructed  from the Gale dual of the map 
$$ \mathbb{Z}^{m}\oplus \mathbb{Z}^{m}\stackrel{(\beta^{\vee},-\beta^{\vee})}{\longrightarrow}
DG(\beta).$$ 
Let 
$D_1,\cdots,D_m,D_1^\prime,\cdots,D_{m}^\prime$ be the images of the above map, which are the toric divisors of 
$X_\theta$. Then the Chern class of the tangent bundle if given by $\sum_{i=1}^m(D_i+D_i^\prime)=0$, hence the toric Deligne-Mumford stack $X_\theta$ is Calabi-Yau. As explained in \cite{Ir}, it follows that the toric mirror theorem of \cite{CCIT} implies that the $\bbT$-equivariant quantum cohomology ring of $X_\theta$ with quantum parameters in $H^{\leq 2}_{orb}$ is isomorphic to Batyrev's quantum ring, taking into account of the {\em mirror map} along $H^{\leq 2}_{orb}$. A direct computation shows that when the quantum parameters are restricted to $H^{\leq 2}\subset H^{\leq 2}_{orb}$, Batyrev's quantum ring is isomorphic to the ring in Theorem \ref{main2:small:quantum:ring}. Therefore it remains to show that the mirror map is the identity when restricted to $H^{\leq 2}$. 

Recall that the Gale dual map is 
$$
\beta_{L}: \mathbb{Z}^{2m}\rightarrow N_{L},
$$
where $\overline{N}_{L}$
is a lattice of dimension $2m-(m-d)$. The map $\beta_{L}$ is given by the integral vectors  $\{b_{L,1},\cdots,
b_{L,m},b'_{L,1},\cdots,
b^{'}_{L,m}\}$ and $\beta_{L}$ is called  the Lawrence lifting of $\beta$.
From Remark 2.3 in \cite{JT}, 
$\{b_{L,1},\cdots,
b_{L,m},b'_{L,1},\cdots,
b^{'}_{L,m}\}$ are the vectors $$\{(b_{1},e_{1}),\cdots,
(b_{m},e_{m}),(0,e_{1}),\cdots,
(0,e_{m})\},$$ where $\{e_{i}\}$ are the standard bases of $\mathbb{Z}^{m}$.
The toric divisors $D_i, D_i^\prime$ define equivariant cohomology classes $u_i, u_i^\prime\in H^2(X_\theta)$, and we have 
$u_i^\prime=\hbar-u_i$. 

We only need to prove that in this case the mirror map is trivial. 

The mirror map restricted to $H^{\leq 2}$ is calculated in \cite[Section 6.3]{CCLT}. To study the mirror map, we need to understand elements of the set 
$\mathbb{K}_{\mbox{\tiny eff}}$, which is the nef cone.  By definition an element in $\mathbb{K}_{\mbox{\tiny eff}}$
is of the form 
$$\beta=\sum_{i=1}^{m}c_ie_i+ \sum_{i=1}^{m}c^\prime_ie^\prime_i\in \zz^m\oplus\zz^m$$
such that 
$$\sum_{i}c_i b_{L,i}+\sum_{i}c_i^\prime b_{L,i}^\prime=0,$$
i.e.
$$\sum_{i}c_i b_i=0, ~\mbox{and}~ c_i+c_i^\prime=0 ~\mbox{for any}~ i.$$
Moreover, $c_i=\langle D_i, \beta\rangle$, $c_i^\prime=\langle D^\prime_i, \beta\rangle$. 
If $\beta\in \Omega_i^{X_\theta}$, where  $\Omega_i^{X_\theta}$ is defined on page 39 of \cite{CCLT}, 
then we have 
$\langle D_i, \beta\rangle<0$, $\langle D_j, \beta\rangle\geq 0$ for $D_j\neq D_i$. 
For $j\neq i$, we have $\langle D_j, \beta\rangle+\langle D^\prime_j, \beta\rangle=0$. Since both are nongenative, we have 
$\langle D_j, \beta\rangle=\langle D^\prime_j, \beta\rangle=0$ for $j\neq i$. 
On the other hand, the definition of $\beta$ implies that 
$$\sum_{k}\langle D_k, \beta\rangle b_k=0\in N.$$
Because $\langle D_j, \beta\rangle=0$ for $j\neq i$, this reduces to 
$\langle D_i, \beta\rangle b_i=0$, which contradicts the requirement that 
$\langle D_i, \beta\rangle<0$. So 
$\Omega_{i}^{X_\theta}=\emptyset$.  Similarly the set corresponding to 
$D_i^\prime$ is empty.  We thus conclude that the mirror map restricted to $H^{\leq 2}$ is trivial. This completes the proof of Theorem \ref{main2:small:quantum:ring}.

We end with calculating the small equivariant quantum cohomology rings for  two examples.  In the second example we also explain how to get the quantum Stanley-Reisner relation from the quantum product formula in Theorem \ref{Quantum:by:a:divisor}. 
\begin{example}
Let $\A=(N, \beta, \theta)$ be the stacky hyperplane arrangement given by
$N=\zz$, $\beta: \zz^2\to N$ is given by $\{-1, 1\}$, and $\theta=1$ in $DG(\beta)=\zz$. 
The hypertoric Deligne-Mumford stack is $T^*\pp^1$, which is a closed substack of the Lawrence toric Deligne-Mumford stack 
$X_{\mathbf{\theta}}=\oO_{\pp^1}(-1)\oplus\oO_{\pp^1}(-1)$.  
Let $u_1, u_2$ be the hyperplane classes of $\pp^1$, and $\hbar$ the first Chern class of the extra $\cc^\times$-action. 
Our formula (\ref{Quantum:by:a:divisor}) is:
$$u_1\star u_2=\frac{Q^\ell}{1-Q^{\ell}}(\hbar-u_1)\cdot (\hbar-u_2).$$
Then using this formula we can find that the quantum ring 
$$QH_{\bbT}^{*}(X_\A)=\frac{\qq[u_1, u_2, \hbar]}{(u_1\star u_2-Q^{\ell}(\hbar-u_1)\star(\hbar-u_2))}.$$
This was computed in \cite{MS}.
\end{example}

\begin{example}
Let $\A=(N, \beta, \theta)$ be the stacky hyperplane arrangement given by
$N=\zz$, $\beta: \zz^2\to N$ is given by $\{-2, 1\}$, and $\theta=1$ in $DG(\beta)=\zz$. 
The hypertoric Deligne-Mumford stack in this case is $X_\A=T^*\pp(1,2)$, which is a closed substack of the Lawrence toric Deligne-Mumford stack 
$X_{\mathbf{\theta}}=\oO_{\pp(1,2)}(-1)\oplus\oO_{\pp(1,2)}(-2)$.  

Let $u_1, u_2$ be the hyperplane classes of $\pp(1,2)$, and $\hbar$ the first Chern class of the extra $\cc^\times$-action in the $\bbT=(\cc^\times)^2\times\cc^\times$-action on $X_\A$. 
The quantum Batyrev ring is 
$$QH_{\bbT}^*(X_\A)=\frac{\qq[u_1, u_2, \hbar]}{(u_1\star (u_2)^2-Q^{2\ell}(\hbar-u_1)\star(\hbar-u_2)^2)},$$
which is the ring in Theorem \ref{main2:small:quantum:ring}. 

Let $\mathds{1}_{1/2}$ be the identity class of the twisted sector $(X_\A)_{1/2}=B\mu_2$. 
We explain that our formula in Theorem \ref{Quantum:by:a:divisor} generate this class. 
From Theorem \ref{Quantum:by:a:divisor}, the quantum product by divisor is:
$$u_1\star u_2=\frac{Q^{2\ell}}{1-Q^{2\ell}}(\hbar-u_1)\cdot (\hbar-u_2)+ \frac{-Q^\ell}{1-Q^{2\ell}}\mathds{1}_{1/2}\cdot \hbar.$$
Hence we get the class $\mathds{1}_{1/2}$ up to $\hbar$. 
Now we explain that from formulas of quantum product by divisors we can get the relation in the quantum ring. 
Using $u_2$ to do quantum product on the above formula, we get 
$$u_1\star u_2\star u_2=\frac{-Q^{\ell}}{1-Q^{2\ell}}\mathds{1}_{1/2}\cdot \hbar\star u_2,$$
since by Theorem \ref{Quantum:by:a:divisor}, when doing quantum product with $(\hbar-u_1)\cdot (\hbar-u_2)$, the integration of $(\hbar-u_1)\cdot (\hbar-u_2)$ over $\pp(1,2)$ is zero.  Hence we get:
$$u_1\star u_2\star u_2=\frac{-Q^{\ell}}{1-Q^{2\ell}}\cdot \frac{-Q^{3\ell}}{1-Q^{2\ell}}(\hbar-u_1)\cdot (\hbar-u_2)^2.$$
Similarly we have:
$$(\hbar-u_1)\star (\hbar-u_2)=(\hbar-u_1)\cdot(\hbar-u_2)+\frac{Q^{2\ell}}{1-Q^{2\ell}}(\hbar-u_1)\cdot (\hbar-u_2)+ \frac{-Q^\ell}{1-Q^{2\ell}}\mathds{1}_{1/2}\cdot \hbar.$$
Hence
$$(\hbar-u_1)\star (\hbar-u_2)\star (\hbar-u_2)=\frac{-Q^{\ell}}{1-Q^{2\ell}}\cdot \frac{-Q^\ell}{1-Q^{2\ell}}(\hbar-u_1)\cdot (\hbar-u_2)^2.$$
Then the relation is obtained by multiplying $Q^{2\ell}$ on above. 
\end{example}


\end{document}